\definecolor{headerblue}{RGB}{100,149,237} 
\newtheorem{thm}{Theorem}[section]
\newtheorem{lemma}[thm]{Lemma}
\theoremstyle{definition}
\newtheorem{remark}{Remark}
\newtheorem{note}[thm]{Note}
\newcommand{\qedwhite}{\hfill \ensuremath{\Box}}
\renewenvironment{proof}{{\raggedright \bfseries Proof.}}{\qedwhite}
\numberwithin{equation}{section}
\renewcommand{\arraystretch}{1.5}
\def\CC{\mathbb C}
\def\NN{\mathbb N}
\def\RR{\mathbb R}
\begin{document}
	\leftline{ \scriptsize \it  }
	\title[]
	{Approximation by Certain Complex  Nevai  Operators : Theory and Applications}
	\maketitle

	\begin{center}
		{\bf Priyanka Majethiya,}
		{\bf Shivam Bajpeyi\footnote{corresponding author\\
				Email: shivambajpai1010@gmail.com, shivambajpeyi@amhd.svnit.ac.in }}

\vskip0.2in
Department of Mathematics, Sardar Vallabhbhai National Institute of Technology Surat, Gujarat-395007, India\\

\verb"  priyankamajethiya2000@gmail.com, shivambajpai1010@gmail.com" 
\end{center}

\begin{abstract}
	
	
The approximation of complex-valued functions is of fundamental importance as it generalizes classical approximation theory to the complex domain, providing a rigorous framework for amplitude and phase-dependent phenomena. In this paper, we study the Nevai operator, a concept formulated by the distinguished mathematician Paul G. Nevai. We propose a family of complex Nevai interpolation operators to approximate analytic as well as non-analytic complex-valued functions along with real-life application in image processing. In this direction, the first operator is constructed using Chebyshev polynomials of the first kind, namely complex generalized Nevai operators for approximating complex-valued continuous functions. We establish the approximation results for the proposed operators utilizing the notion of a modulus of continuity. To approximate not necessary continuous but integrable function, we define complex Kantorovich type Nevai operators and establish their boundedness and convergence. Furthermore, in order to approximate functions preserving higher derivatives, we introduce complex Hermite type Nevai operators and study their approximation capabilities using higher order of modulus of continuity. To validate the theoretical results, we provide numerical illustrations of approximation abilities of proposed family of complex Nevai operators.

	
\end{abstract}

\noindent \textbf{Keywords:}  Approximation in complex domain, generalized Nevai operator, Modulus of continuity, Peetre's K-functional, Image reconstruction\\

\vskip0.001in
\noindent \textbf{Subject Classification : 30E10, 41A35, 41A05, 41A81, 94A08  }

\section{Introduction}



The Nevai operator, first introduced by Paul G. Nevai, constitutes a fundamental tool in approximation theory and the study of orthogonal polynomials.  It provides a robust method to analyze polynomial approximations under very general conditions, without requiring restrictive assumptions on the underlying measure \cite{unit1}. 
Paul G. Nevai is a renowned mathematician celebrated for his significant contributions to the theory of orthogonal polynomials and approximation theory (see \cite{1979}). He has contributed to a deepen understanding of the behavior and convergence of orthogonal polynomials, particularly through his work on Lagrange interpolation, asymptotic formulas, and recurrence relations \cite{lem,1984,  mean1, mean2}.


For the orthonormal polynomial system \(\{u_n\}_{n=0}^\infty\)  associated with  a positive measure \(\mu\) on $\RR$, the \emph{Christoffel–Darboux kernel} is defined as
\[
K_n(v,t) = \sum_{k=0}^n u_k(v) u_k(t),
\]
and the \emph{Christoffel function} is given by
\[
\lambda_n(v) = \sum_{k=0}^n u_k^2(v) = \left(K_n(v,v)\right)^{-1}.
\]
The Christoffel–Darboux kernel and Christoffel function are fundamental tools in the theory of orthogonal polynomials. They are widely used in polynomial least squares approximation, solutions to the moment problem, approximating weight functions, and play a key role to understand the universality phenomena in random matrix theory (see \cite{lub2011,lub2009}).
In this direction, Nevai made pioneering contributions to the theory of orthogonal polynomials and Christoffel functions, particularly in exploring their asymptotic behaviour, and convergence properties \cite{geza}.  Nevai \cite{1979} introduced a  integral operators defined by  
\begin{equation}\label{original}
	F_n(f) = \frac{1}{K_n(v,v)}\int_{\operatorname{supp}(\mu)
	} K_n^2(v, t) f(t) \, d\mu(t),
\end{equation}
where $K_n$ denotes the Christoffel–Darboux kernels. These operators (\ref{original}) provide a powerful tool to study the approximation properties of Christoffel functions (see  \cite{1979}).
Moreover, these operators have broad applications in numerical analysis, spectral theory, and mathematical physics and serve as a cornerstone for advancing the theoretical insights and practical computations involving orthogonal polynomial systems (see \cite{lub2011, be}). 
Using the standard modulus of continuity along with weak asymptotic relations, Criscuolo \emph{et al.} \cite{cris} established pointwise error estimates for (\ref{original}). These operators are well-known as the \emph{Nevai operators} and have been investigated and extensively generalized over the years, see \cite{las,2002, 200, 1992,  2005}.


For any family of orthonormal polynomials $u_n(x)$, the corresponding discrete formulation of (\ref{original}) is given by 
\begin{equation}\label{n11}
	N_n(f) := \frac{
		\displaystyle \sum_{k=1}^n \frac{|\ell_{n,k}(x)|^s}{\lambda_{n,k}^{s/2}} f(x_k)
	}{
		\displaystyle \sum_{k=1}^n \frac{|\ell_{n,k}(x)|^s}{\lambda_{n,k}^{s/2}}
	}, \quad x \in [-1,1], \quad s \geq 2,
\end{equation}
where $\ell_{n,k}(x)= \displaystyle\frac{u_n(x)}{u_n'(x_k)(x-x_k)}$	are the fundamental
Lagrange  polynomials, $\lambda_{n,k}, k=1,2,...,n$  are the corresponding cotes numbers and $x_k$ are the zeros of $u_n(x)$. For \(s=2\), the operator (\ref{n11}) reduces to the operator which was originally introduced and studied by Nevai in \cite{1979}. It is worth noting that (\ref{n11}) can also be seen as a member of a wider class of linear, positive, and rational interpolatory operators as in \cite{cri,della}. Some significant advances in \cite{2002} include the rigorous formulation of direct and inverse theorems for weighted and unweighted uniform approximation by the Nevai operators. Furthermore, the author in \cite{della} proposed a modification of (\ref{n11})  and established  pointwise simultaneous approximation error estimates of Gopengauz–Teliakovskii type. 
Moreover, a uniform convergence result of Korovkin type for (\ref{n11}) has been proved in \cite{cri}. The Jackson-type estimates in weighted $L^p$-spaces along with the associated direct and converse theorems for modified Nevai operators are analyzed in \cite{200}.  Zhou  \cite{2005} introduced the Nevai–Durrmeyer operators and investigated their approximation behavior in weighted $L^p$-spaces. Some notable interpolation operators, namely  Hermite–Fejér interpolation  operator and Shepard operator can be constructed using the Nevai operator  (see \cite{cri}). In addition to their theoretical significance, the Nevai operators have found notable applications in the analysis of orthogonal polynomials on the unit circle, see \cite{unit1,unit2}.




In \cite{1992}, the author introduced and studied a new class of rational interpolation operators based on Nevai operator namely \emph{generalized Nevai interpolation operator} defined as
\begin{flalign}\label{n12}
	N_{n,p}(f) = \frac{\displaystyle \sum_{k=1}^n f(x_k) \lambda_k |K_n(x, x_k)|^p}{\displaystyle\sum_{k=1}^n \lambda_k |K_n(x, x_k)|^p}, \quad p \in [0, \infty),
\end{flalign}
where $K_n(x,x_k)$ is the kernel obtained from the Christoffel-Darboux formula and $\lambda_k$ is the Christoffel function. The operator (\ref{n12}) coincides with the Nevai type rational interpolation operators (\ref{n11}) in the case $p=2.$ 
The operators given in (\ref{n11}) and (\ref{n12}) have been playing a vital role in  approximation theory due of their linear, positive, and interpolatory nature.
\par 
Although substantial research has been devoted to the study of Nevai-type operators for approximating real-valued functions (see \cite{1992,cri,della, lub2011}), their potential for approximating complex-valued functions remains unexplored. In this paper, we construct and analyze generalized Nevai operators in the complex domain to approximate analytic as well as non-analytic functions. This construction utilizes Chebyshev polynomials of the first kind. The superiority of rational approximation over polynomials is well known, and elegant results on rational approximation are obtained by Herbert Stahl in  \cite{31}. Following this line of investigation, further studies have considered polynomial structures, especially Chebyshev polynomials. A comprehensive analysis of Hermite–Fejér interpolation operators on Chebyshev nodes is presented in \cite{cheb,che,ch}.
\par 

It is widely known that the approximation of complex-valued functions is fundamentally significant as such functions inherently capture oscillatory behavior and phase relationships that cannot be represented by real-valued functions alone. These are also critical in practical applications, including radar systems, synthetic aperture radar (SAR) imaging, and signal processing, providing a rigorous framework for modeling amplitude- and phase-dependent phenomena. In this direction, the problem of estimating errors in the weighted approximation of functions with Freud-type weights using entire functions was addressed in \cite{compl}. Recently, D. Yu \cite{dyu} proposed neural network interpolation operators activated by non-compactly supported functions, and established both direct and converse approximation results. Several useful constructions to approximate complex-valued functions can be observed in \cite{1,2,3, duman, jamma}.
\par 

In light of significance of Nevai operators and the long-standing interest for complex-valued function approximation, studying complex Nevai operators and its extensions appears noteworthy.


\subsection{Contributions}
The key contributions of the paper are listed below:
\begin{itemize}

	\item Inspired by the Nevai operator, a novel family of complex interpolation operators based on Nevai operators, is constructed using Chebyshev polynomials of the first kind. We propose certain class of complex Nevai operators for different class of target functions as follows. \\
	\begin{enumerate}[label=(\roman*)]
		\item  To approximate complex valued continuous functions, we introduce complex generalized Nevai operators (\ref{Nevai}) and analyze their approximation properties.
		\item 	In order to approximate not necessarily continuous but complex-valued $p-$integrable functions, we establish the complex Kantorovich type Nevai operators (\ref{kant}) and study their convergence behavior.
		\item  We construct and study the complex Hermite type Nevai operators (\ref{herm}) for approximating complex valued $r$-times differentiable  functions.\\
	\end{enumerate}
	
	
	
	\item Alongside the theoretical advancements, convergence is demonstrated using numerical examples that involves the approximation of the real and imaginary parts of complex-valued functions. In addition, the applicability of complex Kantorovich type Nevai operators is demonstrated in image reconstruction, where both the amplitude and phase are considered. The performance is measured by standard measures such as the structural similarity index measure (SSIM), peak signal-to-noise ratio (PSNR) and root mean square error (RMSE).

\end{itemize}

\subsection{Organization of the paper}
The paper is organized as follows:
\begin{itemize} 
	\item Section \ref{2} presents preliminary definitions and some important results which will be required for further analysis. \\
	\item In Section \ref{3},  we provide the construction and analysis of the complex generalized Nevai operators within continuous function space. Furthermore, in Section \ref{4}, we extend this framework to the complex Kantorovich type Nevai operators and study their approximation properties.\\
	\item  In Section \ref{5}, we define and study the complex Hermite type Nevai  operators for approximating $r$-times differentiable functions.\\
	\item   Section \ref{6} presents the illustration of approximation capabilities of the proposed family of complex Nevai operators and application in image-reconstruction.

\end{itemize}

\section{Preliminaries and notations}\label{2}
			 We use the notations $\mathbb{N}$, $\mathbb{Z}$, $\mathbb{R}$ and $\mathbb{C}$ to represent the set of natural numbers, integers, real numbers, and complex numbers respectively.  	Here we denote $
		 X = [-1,1] \times [-1,1] = \{ \mathscr{z} = x + iy \in \mathbb{C} : x, y \in [-1,1]\},$
		 where $i^2 = -1$, $x = \Re(\mathscr{z})$ and $y = \Im(\mathscr{z})$.  Here $|\cdot|_2$ denotes the usual Euclidean norm on the set $X$. 	We denote the space of all complex-valued continuous  functions  by $C(X)$ equipped with the norm $\|f\|_{\infty} :=\sup_{t \in X } |f(t)|.$ The space of all complex-valued  absolutely  continuous functions is referred to as $AC(X)$. 
		   The notion of modulus of continuity for $f \in C(X)$ is defined as (\cite{lorentz1963degree})
		 	\[
		 \omega(f,\delta) = \sup_{\substack{x,y \in X \\ |x - y|_2 \leq \delta}} |f(x) - f(y)|.
		 \]
		 The space of all complex-valued  $r$-times differentiable  functions is denoted by $C^r(X)$. Moreover, the modulus of continuity for  $f \in C^r(X)$ is given by (\cite{lorentz1963degree})
			\[
			\omega(f^r,\delta) = \sup_{\substack{x,y \in X  \\ |x - y|_2 \leq \delta}} |f^r(x) - f^r(y)|.
			\]
			It is used to describe smoothness and approximation properties.
			It is worth noting that the following statements hold for $\lambda>0$:
			\begin{equation}\label{omega}
				\omega(f, \lambda \delta) \leq (1 + \lambda)\hspace{2pt}\omega(f,\delta),
			\end{equation}
			and
				\begin{equation}\label{ome}
				\omega(f^r, \lambda \delta) \leq (1 + \lambda)\hspace{2pt}\omega(f^r,\delta).
			\end{equation}
			Moreover, the set of all complex-valued  $p-$integrable   function is denoted by $L^p(X)$, for $1 \leq p < \infty$, which consists of equivalence classes of measurable functions $f: X \to \mathbb{C}$ satisfying  $
			\int_{X} |f(x)|^p \, dx < \infty.$  
			The corresponding norm is given by  
			\[
			\|f\|_{p} = \left(\int_{X} |f(x)|^p \, dx\right)^{1/p}.
			\] 
			 The modulus of continuity for $f \in L^p(X)$ is defined as (\cite{lorentz1963degree})
			\[
			\omega(f,\delta)_p = \sup_{|h|\leq \delta} \|f(x+h) - f(x)\|_p,
			\]
			 and the Peetre's K-functional for $f \in L^p(X)$ is defined as (\cite{joh})
			\[
			K(f,t)_p := \inf_{D^{\alpha} h \in C(X)} \{\|f - h\|_p + t \sup_{|\alpha|=1} \|D^{\alpha} h\|_p\}
			\]
			where $|\alpha|=\alpha_1+...+\alpha_s=1$ with $\alpha_i \in  \{  0 , 1 \}$ for $i=1,2,...,s$, $D^{\alpha}=\frac{\partial^{\alpha}}{\partial x_1^{\alpha_1}\cdots\partial x_s^{\alpha_s}}$.

				\subsection{Chebyshev polynomials}
			Let \( w(x) = (1 - x)^\alpha (1 + x)^\beta \; (-1 < \alpha, \beta < 1) \) denote the Jacobi weight function. In the rest of the paper we will be dealing with the special case of Jacobi polynomials, namely the Chebyshev polynomials of the first kind. The   Chebyshev polynomials of the first kind for $\alpha=\beta=-1/2$ are given by
			\[
			P_n(x)  = \cos\!\big(n \arccos x\big).
			\]
			The polynomials $	P_n(x)$ satisfy the following relation

\[
\int_{-1}^{1} \frac{P_m(x)\,P_n(x)}{\sqrt{1-x^2}} \, dx
=
\begin{cases}
	\pi, & m=n=0, \\[6pt]
	\frac{\pi}{2}, & m=n\geq 1, \\[6pt]
	0, & m \ne n.
\end{cases}
\]

The orthonormal version of the Chebyshev polynomials of the first kind $P_n(x)$, can be written as
\[
T_0(x)=\frac{1}{\sqrt{\pi}},\qquad
T_n(x)=\sqrt{\frac{2}{\pi}}\,P_n(x)\ (n\ge1),
\]
and
\[
\int_{-1}^{1}\frac{T_n(x)\hspace{2pt}T_m(x)}{\sqrt{1-x^2}}\,dx=\delta_{mn},
\]
			where \(\delta_{mn}\) denotes the Kronecker's delta symbol.
		
		\subsubsection{Christoffel functions}
				The Christoffel functions \(\lambda_n(v)\) are closely connected with  the cotes numbers \(\lambda_k = \lambda_n(v_k)\), where \(\{v_k\}\) denotes the zeros of \(T_n(v)\) arranged in the increasing order, i.e,
				$v_n < v_{n-1} < \cdots < v_1.$
			The reproducing kernel function is written as 
			\[
			K_n(v,t) =  \sum_{k=0}^n T_k(v)\hspace{2pt} T_k(t),  \]
			 by using the Christoffel-Darboux formula, which is given as
			\begin{equation}\label{chris}
				K_n(v,t) =  \frac{T_n(v)\hspace{2pt} T_{n-1}(t) - T_{n-1}(v) \hspace{2pt}T_n(t)}{v - t}.
			\end{equation}
  It can be observed that
	\begin{equation}\label{a}
	\sum_{k=1}^n \lambda_k = M_1 , \quad 	\sum_{m=1}^n \lambda_m =  M_2,
	\end{equation}
	for  some  constant $M_1,M_2>0.$
	
	 \subsubsection{Interpolation with Chevyshev nodes}
 For a given $n \in \mathbb{N}$, we consider the following sample points for the set $X$ :
		\[
		\mathscr{z}_{k,m} = x_k + i y_m, \quad k, m \in \{  1, \ldots, n \},
		\]
		where $x_k$ and $y_m$ are zeros of Chebyshev polynomials of the first kind  $T_n(x)$ and $T_n(y),$ respectively i.e., $ x_k = \cos\!\left(\frac{(2k-1)\pi}{2n}\right), $ and $ y_m = \cos\!\left(\frac{(2m-1)\pi}{2n}\right).$

		Now we are ready to define and analyze the proposed family of complex Nevai operators.
		
			\section{ Approximation by Complex generalized Nevai Operators}\label{3}
		Let $f$ be a complex-valued continuous function defined on $X$. Then, for a positive real number $\lambda_{k,m}$ (defined in Section \ref{2}), we define the complex generalized   Nevai  operators as
		\begin{equation}\label{Nevai}
			N_{n,s}(f, \mathscr{z}) = \frac{\displaystyle \sum_{k=1}^{n}\sum_{m=1}^{n} \lambda_{k,m} \left| K_n(\mathscr{z}, \mathscr{z}_{k,m}) \right|^{s} f(\mathscr{z}_{k,m})}{ \displaystyle\sum_{k=1}^{n}\sum_{m=1}^{n} \lambda_{k,m} \left| K_n(\mathscr{z}, \mathscr{z}_{k,m}) \right| ^{s} }, \quad s \in [0,\infty),
		\end{equation}
where \( K_n(\mathscr{z},  \mathscr{z}_{k,m}) = K_n(x, x_k) \hspace{2pt} K_n(y, y_m). \)
		 One can observe that the complex generalized Nevai operator (\ref{Nevai}) is a positive linear operator and interpolates at the sample points $\mathscr{z}_{k,m}$, i.e.,
		\[
		N_{n,s}(f, \mathscr{z}_{k,m}) = f(\mathscr{z}_{k,m}) \quad \text{for} \quad k, m \in \{ 1, \ldots, n\}.
		\]

			Before proving the main result of this section, we first present the following lemma.
		
			\begin{lemma}\label{lem 3.1} \cite{lem}
			 Let $x_k = \cos \theta_k$ and $y_m = \cos \phi_m$ , with $0 \leq \theta_k, \phi_m \leq \pi$. Then
			\begin{equation}\label{2.6}
				\theta_{k+1} - \theta_{k} \sim \frac{1}{n}, \qquad \phi_{m+1} - \phi_{m} \sim \frac{1}{n}, \qquad 0 \leq k,m \leq n,
			\end{equation}
			and
			\begin{equation}\label{2.7}
				\lambda_n(\mathscr{z})=	\lambda_n(x) 	\lambda_n(y) \sim 
				\begin{cases}
					\dfrac{1}{n^2} , & |x| \leq 1 - n^{-2} , |y| \leq 1 - n^{-2}, \\[1.2ex]
					\dfrac{1}{n^2}, & 1 - n^{-2} \leq x \leq 1 , 1 - n^{-2} \leq y \leq 1  , \\[1.2ex]
					\dfrac{1}{2n^4-n^{-2}}, & -1 \leq x \leq -1 + n^{-2} , -1 \leq y \leq -1 + n^{-2}.
				\end{cases}
			\end{equation}
				Moreover,
			\begin{equation}\label{2.8}
				\lambda_{k,m}=	\lambda_{k}	\lambda_{m}  \sim  \frac{1}{n^2}, 
				\qquad k,m = 1, \dots, n,
			\end{equation}
			
			
			
		
			\begin{equation}\label{2.9}
				|T_{n-1}(x_{k})| \sim 1, \quad \text{and  }\quad
				|T_{n-1}(y_{m})| \sim 1,
			\end{equation}
			\begin{equation}\label{2.10}
			1-	x_n \sim 1+x_1 \sim \frac{1}{n^2}, \quad\text{  and  }\quad
			1-	y_n \sim 1+y_1 \sim \frac{1}{n^2}.
			\end{equation}
			\end{lemma}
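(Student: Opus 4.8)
The plan is to read off every estimate from the explicit trigonometric representation $P_n(x)=\cos(n\arccos x)$ of the Chebyshev polynomials of the first kind, together with the Christoffel--Darboux formula (\ref{chris}). Writing $x_k=\cos\theta_k$ with $\theta_k=\frac{(2k-1)\pi}{2n}$, and likewise $y_m=\cos\phi_m$, each quantity in the statement reduces to an elementary trigonometric expression that can be handled by Taylor expansion or by summing a short cosine series.

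First I would dispose of (\ref{2.6}) and (\ref{2.10}), which are immediate. Since $\theta_k=\frac{(2k-1)\pi}{2n}$ the consecutive gaps are exactly $\theta_{k+1}-\theta_k=\frac{\pi}{n}$ (and similarly for $\phi_m$), giving the claimed $\sim n^{-1}$ spacing with no error term. For the boundary nodes, $\theta_1=\frac{\pi}{2n}$ yields $1-x_1=1-\cos\frac{\pi}{2n}=\frac12\left(\frac{\pi}{2n}\right)^2+O(n^{-4})\sim n^{-2}$, and the node nearest $-1$ is treated symmetrically, which establishes (\ref{2.10}).

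The core is the pair (\ref{2.9})--(\ref{2.8}), which I would treat together. Letting $t\to v=x_k$ in (\ref{chris}) and using $T_n(x_k)=0$ gives the diagonal value $K_n(x_k,x_k)=T_{n-1}(x_k)\,T_n'(x_k)$, hence $\lambda_k=\lambda_n(x_k)=\bigl[T_{n-1}(x_k)\,T_n'(x_k)\bigr]^{-1}$. A direct differentiation of $T_n(x)=\sqrt{2/\pi}\,\cos(n\arccos x)$ gives $T_{n-1}(x_k)=\sqrt{2/\pi}\,(-1)^{k-1}\sin\theta_k$ and $T_n'(x_k)=\sqrt{2/\pi}\,n(-1)^{k-1}/\sin\theta_k$; in the product the $\sin\theta_k$ factors cancel, leaving $\lambda_k=\pi/(2n)\sim n^{-1}$, so that $\lambda_{k,m}=\lambda_k\lambda_m\sim n^{-2}$, which is (\ref{2.8}). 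The same formula gives (\ref{2.9}) for nodes in the bulk, where $\sin\theta_k\sim 1$.

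Finally, for the Christoffel-function asymptotics (\ref{2.7}) I would start from $\lambda_n(v)=\bigl(K_n(v,v)\bigr)^{-1}$ with $K_n(v,v)=\frac{1}{\pi}+\frac{2}{\pi}\sum_{k=1}^n\cos^2(k\theta)$, $\theta=\arccos v$, and use $\cos^2(k\theta)=\frac12(1+\cos 2k\theta)$ to split the sum into a linear term $n/2$ and a Dirichlet-type sum $\sum_{k=1}^n\cos 2k\theta$. In the bulk the oscillatory sum is $O(1/\sin\theta)=O(1)$, so $K_n(v,v)\sim n$, giving $\lambda_n(v)\sim n^{-1}$ and the product $\lambda_n(x)\lambda_n(y)\sim n^{-2}$. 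The endpoint regimes follow by evaluating the kernel sum as $v\to\pm1$, where $\cos^2(k\theta)$ no longer averages to $\tfrac12$ and must be summed explicitly; the symmetry $T_k(-x)=(-1)^kT_k(x)$ ties the two edges together. I expect this endpoint analysis to be the main obstacle: in the transition zones $|v|=1-O(n^{-2})$ the linear and oscillatory contributions become comparable, so the clean ``$K_n\sim n$'' heuristic breaks down and a uniform bound requires either the closed form of the Dirichlet sum or known hard-edge asymptotics for the Chebyshev Christoffel function.
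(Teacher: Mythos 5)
The paper offers no proof of this lemma at all---it is imported verbatim from Nevai--V\'ertesi \cite{lem}---so the only question is whether your direct computation actually establishes every assertion. Most of it does: (\ref{2.6}), (\ref{2.10}) and (\ref{2.8}) follow exactly as you say, since $\theta_k=\tfrac{(2k-1)\pi}{2n}$ gives exact spacing $\pi/n$, $1-\cos\tfrac{\pi}{2n}\sim n^{-2}$, and the confluent form of (\ref{chris}) together with $T_{n-1}(x_k)=\sqrt{2/\pi}\,(-1)^{k-1}\sin\theta_k$ and $T_n'(x_k)=\sqrt{2/\pi}\,n(-1)^{k-1}/\sin\theta_k$ yields $\lambda_k\sim n^{-1}$. (The exact constant you obtain, $\pi/(2n)$, differs by a factor $2$ from the $\pi/n$ that comes from summing $\sum_{j<n}T_j(x_k)^2$ directly; this reflects a missing leading-coefficient ratio in the paper's version of the Christoffel--Darboux formula (\ref{chris}), and is immaterial for a $\sim$ statement.)

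The genuine gaps are in (\ref{2.9}) and (\ref{2.7}), and you have half-noticed both. Your own identity $|T_{n-1}(x_k)|=\sqrt{2/\pi}\,\sin\theta_k=\sqrt{2/\pi}\,\sqrt{1-x_k^2}$ proves (\ref{2.9}) only for nodes with $\sin\theta_k$ bounded below; at $k=1$ it gives $|T_{n-1}(x_1)|\sim n^{-1}$, so the uniform claim $|T_{n-1}(x_k)|\sim 1$ for all $k=1,\dots,n$ cannot be proved as stated---the correct uniform statement carries the factor $\sqrt{1-x_k^2}$, and restricting to ``bulk'' nodes silently weakens the lemma. For (\ref{2.7}) you stop exactly where the remaining work lies, but for the Chebyshev weight the endpoint is not actually hard: $K_n(\pm1,\pm1)=\tfrac1\pi+\tfrac2\pi\sum_{k=1}^n1=\tfrac{2n+1}{\pi}$ exactly, the symmetry $T_k(-x)=(-1)^kT_k(x)$ forces the two edges to behave identically, and the closed form $K_n(\cos\theta,\cos\theta)=\tfrac1\pi\bigl(n+\tfrac12+\tfrac{\sin((2n+1)\theta)}{2\sin\theta}\bigr)$ is bounded between $\tfrac{2n+1}{4\pi}$ and $\tfrac{2n+1}{\pi}$ uniformly. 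Hence $\lambda_n(v)\sim n^{-1}$ on all of $[-1,1]$ and $\lambda_n(x)\lambda_n(y)\sim n^{-2}$ in every regime, which agrees with the first two cases of (\ref{2.7}) but contradicts the third (which asserts order $n^{-4}$ near $(-1,-1)$). So the ``main obstacle'' you anticipate does not exist; completing your computation would instead show that the third case of the lemma cannot be correct for Chebyshev nodes. You should either prove the uniform $n^{-2}$ estimate and flag that discrepancy, or restrict (\ref{2.9}) and (\ref{2.7}) to the interior, where your argument is complete.
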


			\begin{lemma}\label{lem 3.2} \cite{1984}
				For any Lagrange  polynomial $\ell_k(x)$ and $ \ell_k(y)$, we have
				\begin{enumerate}
					\item 
						$	|\ell_k(x)|  \sim  \displaystyle\frac{|T_n(x)|\sqrt{1-x_k^2}}{n|x-x_k|}, \quad |\ell_m(y)|  \sim  \displaystyle\frac{|T_n(y)|\sqrt{1-y_m^2}}{n|y-y_m|} .$
			\item 	$|x-x_k| \sim \displaystyle \frac{|p-k|}{n} \sqrt{1-x^2}+\frac{(p-k)^2}{n^2},  \quad k \neq p, \qquad |y-y_m| \sim \frac{|q-m|}{n} \sqrt{1-y^2}+\frac{(q-m)^2}{n^2},  \quad m \neq q,$
				where $x_p$ is the closest zero to $x,$ and  $y_q$ is the closest zero to $y.$
					\item 
				$\displaystyle \frac{1-x_k^2}{n^2(x-x_k)^2}\leq \displaystyle \frac{1}{(k-p)^2},  \quad k \neq p, \qquad \displaystyle\frac{1-y_m^2}{n^2(y-y_m)^2}\leq \displaystyle \frac{1}{(m-q)^2},  \quad m \neq q$.
					\end{enumerate}
			\end{lemma}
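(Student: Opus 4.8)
The three estimates are purely local facts about the Chebyshev nodes, and since the statements in $y$ are identical to those in $x$ (the two node systems coincide), I would prove only the $x$-versions. Throughout I would write $x=\cos\theta$, $x_k=\cos\theta_k$ with $\theta_k=\frac{(2k-1)\pi}{2n}$, and let $x_p$ be the node closest to $x$, so that $|\theta-\theta_p|\le \frac{\pi}{2n}$ and $\theta_p-\theta_k=\frac{(p-k)\pi}{n}$.

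For part (1), I would start from the interpolatory form $\ell_k(x)=\dfrac{T_n(x)}{T_n'(x_k)\,(x-x_k)}$ and compute $T_n'(x_k)$ explicitly. Writing $T_n(x)=c_n\cos(n\arccos x)$ gives $T_n'(x)=c_n\,n\,\dfrac{\sin(n\arccos x)}{\sqrt{1-x^2}}$, and at a node $n\theta_k=(k-\tfrac12)\pi$, so that $|\sin(n\theta_k)|=1$ \emph{exactly}. Hence $|T_n'(x_k)|=\dfrac{c_n n}{\sqrt{1-x_k^2}}$, and substituting yields $|\ell_k(x)|\sim \dfrac{|T_n(x)|\sqrt{1-x_k^2}}{n|x-x_k|}$, the normalization constant $c_n$ being absorbed into the $\sim$. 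This step is essentially an exact computation and presents no difficulty.

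For part (2), the plan is to use the product identity $x-x_k=\cos\theta-\cos\theta_k=-2\sin\frac{\theta+\theta_k}{2}\sin\frac{\theta-\theta_k}{2}$ and to estimate the two sine factors separately. The ``difference'' factor is routine: since $\bigl|\tfrac{\theta-\theta_k}{2}\bigr|\le \tfrac{\pi}{2}$ and $\sin u\sim u$ on $[0,\tfrac{\pi}{2}]$, the uniform node spacing \eqref{2.6} gives $\bigl|\sin\tfrac{\theta-\theta_k}{2}\bigr|\sim |\theta-\theta_k|\sim \tfrac{|p-k|}{n}$ for $k\ne p$. The heart of the matter is the ``sum'' factor, for which I would establish
\[
\Bigl|\sin\tfrac{\theta+\theta_k}{2}\Bigr|\ \sim\ \sqrt{1-x^2}+\tfrac{|p-k|}{n}.
\]
Multiplying the two estimates then gives exactly the claimed $|x-x_k|\sim \tfrac{|p-k|}{n}\sqrt{1-x^2}+\tfrac{(p-k)^2}{n^2}$. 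I expect this sum-factor estimate to be the main obstacle: in the bulk (where $\sqrt{1-x^2}=\sin\theta$ is bounded below) it is immediate that $\sin\frac{\theta+\theta_k}{2}\sim\sin\theta$, but near the endpoints $x=\pm1$, where $\sin\theta\to0$, one must instead show that $\sin\frac{\theta+\theta_k}{2}$ is controlled by $\frac{|p-k|}{n}$, and that a small value of the average angle forces both indices to be small, so that $\frac{|p-k|}{n}$ is itself small. This is precisely where the endpoint spacing relations of Lemma~\ref{lem 3.1}, notably $1-x_n\sim 1+x_1\sim n^{-2}$ in \eqref{2.10}, are used.

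For part (3), I would deduce the bound from the factorization already in hand. Using $\sin^2\theta_k\le 2\sin^2\frac{\theta+\theta_k}{2}+2\sin^2\frac{\theta-\theta_k}{2}$ together with $(x-x_k)^2=4\sin^2\frac{\theta+\theta_k}{2}\sin^2\frac{\theta-\theta_k}{2}$, one obtains
\[
\frac{1-x_k^2}{n^2(x-x_k)^2}\ \le\ \frac{1}{2n^2\sin^2\frac{\theta-\theta_k}{2}}+\frac{1}{2n^2\sin^2\frac{\theta+\theta_k}{2}},
\]
and both denominators are bounded below by a multiple of $\frac{(p-k)^2}{n^2}$ by the two estimates from part (2); this already yields $\frac{1-x_k^2}{n^2(x-x_k)^2}\lesssim\frac{1}{(k-p)^2}$. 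To obtain the stated inequality with constant $1$, I would instead combine the Lipschitz bound $\sqrt{1-x_k^2}\le\sqrt{1-x^2}+|\theta-\theta_k|$ with the sharp lower estimate $|x-x_k|\ge \frac{|p-k|}{n}\bigl(\sqrt{1-x^2}+\frac{|p-k|}{n}\bigr)$ coming from part (2), so that the two copies of $\bigl(\sqrt{1-x^2}+\frac{|p-k|}{n}\bigr)$ cancel and only the factor $\frac{1}{(k-p)^2}$ survives.
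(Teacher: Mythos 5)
The paper does not prove this lemma at all: it is imported verbatim from Nevai's 1984 paper via the citation \cite{1984} and used as a black box, so there is no in-paper argument to compare yours against. Judged on its own terms, your derivation is essentially the standard one and is sound where it matters. Part (1) is, as you say, an exact computation: at a Chebyshev node $|\sin(n\theta_k)|=1$, so $|T_n'(x_k)|=c_n n/\sqrt{1-x_k^2}$ and the claimed equivalence is an identity up to the normalization constant. For part (2) you correctly isolate the crux, namely the two-sided estimate $\bigl|\sin\tfrac{\theta+\theta_k}{2}\bigr|\sim\sqrt{1-x^2}+\tfrac{|p-k|}{n}$; the upper bound is the addition formula plus $|\theta-\theta_k|\lesssim|p-k|/n$, and the lower bound follows from the case analysis you indicate (in the bulk $\sin\tfrac{\theta+\theta_k}{2}\sim\sin\theta$; near an endpoint $\tfrac{\theta+\theta_k}{2}\sim\max(\theta,\theta_k)$ and $|\theta_p-\theta_k|\le\theta_p+\theta_k\lesssim\theta+\theta_k$ because $\theta_k\ge\pi/(2n)$ and $|\theta-\theta_p|\le\pi/n$, which is where \eqref{2.6} and \eqref{2.10} enter). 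One small caveat: since the nearest node is defined by $x$-distance rather than $\theta$-distance, you only get $|\theta-\theta_p|\le\pi/n$ rather than $\pi/(2n)$, but this changes nothing.

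The one genuine weak point is your attempt to obtain part (3) with the literal constant $1$. The lower bound $|x-x_k|\ge\tfrac{|p-k|}{n}\bigl(\sqrt{1-x^2}+\tfrac{|p-k|}{n}\bigr)$ that you want to cancel against is only the $\gtrsim$ half of the equivalence in part (2), i.e.\ it holds with an unspecified absolute constant, and likewise $|\theta-\theta_k|\le c\,|p-k|/n$ with $c>1$; so the cancellation yields $\frac{1-x_k^2}{n^2(x-x_k)^2}\le \frac{C}{(k-p)^2}$ for some absolute $C$, not $C=1$. This is really a defect of the transcription of the lemma in the present paper rather than of your argument — Nevai's original bounds are stated up to constants, and the only place part (3) is invoked (the proof of Lemma~\ref{lem 4.1}) already absorbs such a constant into $D_s$ — but you should either state (3) with an explicit constant or not promise the sharp form.
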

				\begin{remark}\label{rem 1}
				\cite{freud}	Let $\ell_k(x)$ be the fundamental Lagrange  polynomial. Using the well-known formula
					 \begin{equation}\label{lkk}
						\ell_k(x)=\lambda_k K_n(x,x_k)
					\end{equation} 
					 we can write
				 $$\lambda_k | K_n(x,x_k)|^s= \displaystyle \frac{|\ell_k(x)|^s}{\lambda_k^{s-1}}.$$
			\end{remark}

		\begin{lemma}\label{lem 3.33} \cite{1992}
		Let $\mathscr{z}_{p,q}$ be the  nearest node to $\mathscr{z}$. Then for $\mathscr{z} \in [x_p,x_{p+1}] \times [y_q,y_{q+1}]$, we have
			\begin{equation}\label{3.10}
					w(x_{p})\ \asymp\ w(x)\ \asymp\ w(x_{p+1}) \text{ and } 	\lambda_{p}\ \asymp\ \lambda_n(x)\ \asymp\ \lambda_{p+1},
			\end{equation}
			and
			\begin{equation}\label{3.111}
				 	w(y_{q})\ \asymp\ w(y)\ \asymp\ w(y_{q+1})  \text{  and  } 	\lambda_{q}\ \asymp\ \lambda_n(y)\ \asymp\ \lambda_{q+1}.
			\end{equation}
		\end{lemma}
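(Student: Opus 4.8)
The plan is to pass to the angular variable $\theta$, where the Chebyshev mesh is uniform, and to reduce both assertions in \eqref{3.10} to the elementary comparability of $\sin\theta$ across a single mesh interval; the statement \eqref{3.111} for the $y$-variable will then follow verbatim. Throughout I would use that for the first-kind Chebyshev weight ($\alpha=\beta=-\tfrac12$) one has $w(x)=(1-x^{2})^{-1/2}$, so that writing $x=\cos\theta$ gives $w(\cos\theta)=(\sin\theta)^{-1}$ and $\sqrt{1-x^{2}}=\sin\theta$ on $(0,\pi)$. Setting $x=\cos\theta,\ x_{p}=\cos\theta_{p},\ x_{p+1}=\cos\theta_{p+1}$ with $\theta$ lying between $\theta_{p}$ and $\theta_{p+1}$ (the order reverses because $\cos$ is decreasing), the weight comparison $w(x_{p})\asymp w(x)\asymp w(x_{p+1})$ is exactly equivalent to
\[
\sin\theta_{p}\ \asymp\ \sin\theta\ \asymp\ \sin\theta_{p+1},
\]
so the entire weight part is reduced to controlling $\sin\theta$ on one cell.

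The crux is this $\sin$-comparability. Using the explicit nodes $\theta_{k}=\frac{(2k-1)\pi}{2n}$ one has the exact uniform spacing $\theta_{p+1}-\theta_{p}=\pi/n$ (consistent with \eqref{2.6}), hence $|\theta-\theta_{p}|\le\pi/n$. With $d(\theta):=\min(\theta,\pi-\theta)$ and the elementary bound $\tfrac{2}{\pi}\,d(\theta)\le\sin\theta\le d(\theta)$ on $(0,\pi)$, it suffices to prove $d(\theta_{p})\asymp d(\theta)\asymp d(\theta_{p+1})$. The decisive geometric fact is that every cell lies inside $[\theta_{1},\theta_{n}]=[\tfrac{\pi}{2n},\,\pi-\tfrac{\pi}{2n}]$, so $d(\theta)\ge\pi/(2n)$ and likewise $d(\theta_{p})\ge\pi/(2n)$. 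Then $|\theta-\theta_{p}|\le\pi/n\le 2\,d(\theta_{p})$ and $|\theta-\theta_{p}|\le 2\,d(\theta)$, which bounds each of $d(\theta),d(\theta_{p})$ by three times the other; the same holds for $\theta_{p+1}$. This yields the $\sin$-comparability and hence the weight part of \eqref{3.10}.

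For the Christoffel factors the argument is shorter. By \eqref{2.8} the cotes numbers satisfy $\lambda_{p}\sim 1/n\sim\lambda_{p+1}$, and for the first-kind Chebyshev weight the Christoffel function is of the same order, $\lambda_{n}(x)\sim 1/n$ throughout (the constancy of the Gauss--Chebyshev weights, reflected by the bulk case of \eqref{2.7}); equivalently one may invoke a standard relation of the type $\lambda_{n}(x)\asymp\tfrac1n\sqrt{1-x^{2}}\,w(x)$, which together with the $\sin$-comparability of the previous step transfers the weight estimate directly to $\lambda_{n}$. Either way $\lambda_{p}\asymp\lambda_{n}(x)\asymp\lambda_{p+1}$, completing \eqref{3.10}. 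Finally, since the kernel and the cotes numbers factor as $K_{n}(\mathscr{z},\mathscr{z}_{k,m})=K_{n}(x,x_{k})K_{n}(y,y_{m})$ and $\lambda_{k,m}=\lambda_{k}\lambda_{m}$, the $y$-variable decouples completely and \eqref{3.111} follows by repeating the identical argument with $\theta_{p}$ replaced by $\phi_{q}$ and the second relations of \eqref{2.6}.

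The step I expect to be the real obstacle is the weight comparability near the endpoints $x=\pm1$: there $w$ blows up and the node spacing measured in $x$ collapses to order $1/n^{2}$, so any estimate carried out directly in the $x$-variable would fail to produce bounded ratios. The resolution is precisely the passage to $\theta$ used above, where the mesh is uniform of size $\pi/n$ while the extreme nodes sit at angular distance $\pi/(2n)$ from $0$ and $\pi$; matching these two scales is exactly what keeps $d(\theta_{p+1})/d(\theta_{p})$ bounded and makes the boundary cells behave like the interior ones.
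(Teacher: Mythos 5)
The paper does not prove this lemma at all: it is imported verbatim from Min (1992) via the citation, so there is no in-paper argument to compare yours against. Your blind proof is, as far as I can check, correct and self-contained for the Chebyshev case, and it is essentially the standard argument one would find in the source: pass to $\theta=\arccos x$, use the uniform angular mesh $\theta_{k}=\tfrac{(2k-1)\pi}{2n}$, and reduce everything to the comparability of $\sin\theta$ over a single cell via $d(\theta)=\min(\theta,\pi-\theta)\ge\pi/(2n)$ together with the Lipschitz bound $|d(\theta)-d(\theta_{p})|\le|\theta-\theta_{p}|\le\pi/n$. Two small remarks. First, you quote \eqref{2.8} as giving $\lambda_{p}\sim 1/n$, whereas \eqref{2.8} is stated for the product $\lambda_{k}\lambda_{m}$; this is harmless here because the Gauss--Chebyshev cotes numbers are in fact all equal to $\pi/n$, but it is worth saying so explicitly rather than extracting a square root of a product estimate. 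Second, the auxiliary relation $\lambda_{n}(x)\asymp\tfrac{1}{n}\sqrt{1-x^{2}}\,w(x)$ that you offer as an alternative is not valid near $x=\pm1$ for general Jacobi weights (the correct statement there involves $\sqrt{1-x^{2}}+\tfrac1n$); for the first-kind Chebyshev weight the product $\sqrt{1-x^{2}}\,w(x)\equiv1$ makes this moot, and in any case the lemma only concerns points inside a cell $[x_{p+1},x_{p}]$, which stay at distance $\gtrsim n^{-2}$ from the endpoints, so your conclusion $\lambda_{p}\asymp\lambda_{n}(x)\asymp\lambda_{p+1}$ stands. Your closing observation that the endpoint degeneracy is resolved precisely by working in the angular variable is the right diagnosis of where a naive argument in $x$ would fail.
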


				\begin{lemma} \label{lem 3.3}
					For any $ \mathscr{z} \in X$, the following assertions holds
				\[
				B_{n,s}(\mathscr{z}) = \sum_{k=1}^{n} \sum_{m=1}^{n} \lambda_{k,m} |K_n(\mathscr{z},\mathscr{z}_{k,m})|^{s} \geq c \left(K_n(\mathscr{z},\mathscr{z})\right)^{s-1}, \quad  s \geq 1 .
				\]
				\end{lemma}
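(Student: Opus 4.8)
The plan is to exploit the product structure of the kernel. Since $\lambda_{k,m}=\lambda_{k}\lambda_{m}$ by \eqref{2.8} and $K_n(\mathscr{z},\mathscr{z}_{k,m})=K_n(x,x_k)\,K_n(y,y_m)$ by definition, the double sum factors, and likewise $K_n(\mathscr{z},\mathscr{z})=K_n(x,x)\,K_n(y,y)$ (equivalently $\lambda_n(\mathscr{z})=\lambda_n(x)\lambda_n(y)$ from \eqref{2.7}). Concretely,
\[
B_{n,s}(\mathscr{z})=\Big(\sum_{k=1}^{n}\lambda_{k}|K_n(x,x_k)|^{s}\Big)\Big(\sum_{m=1}^{n}\lambda_{m}|K_n(y,y_m)|^{s}\Big),\qquad \big(K_n(\mathscr{z},\mathscr{z})\big)^{s-1}=\big(K_n(x,x)\big)^{s-1}\big(K_n(y,y)\big)^{s-1}.
\]
Thus it suffices to prove the one-dimensional estimate $\sum_{k=1}^{n}\lambda_{k}|K_n(x,x_k)|^{s}\ge c_1\big(K_n(x,x)\big)^{s-1}$ (and the identical statement in $y$); multiplying the two then gives the claim with $c=c_1^{2}$.

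For the one-dimensional estimate I would discard all terms except the one indexed by the node $x_p$ nearest to $x$, so that $\sum_{k}\lambda_{k}|K_n(x,x_k)|^{s}\ge \lambda_{p}|K_n(x,x_p)|^{s}$. Using the identity $\ell_p(x)=\lambda_p K_n(x,x_p)$ from \eqref{lkk}, this single term equals $|\ell_p(x)|^{s}\,\lambda_p^{\,1-s}$. Hence the estimate reduces to two ingredients: a uniform lower bound $|\ell_p(x)|\ge c_0>0$ valid whenever $x_p$ is the node nearest to $x$, and the comparison $\lambda_p\asymp\lambda_n(x)=\big(K_n(x,x)\big)^{-1}$ furnished by Lemma \ref{lem 3.33}, which yields $\lambda_p^{\,1-s}\asymp\big(K_n(x,x)\big)^{s-1}$ for $s\ge 1$. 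Combining the two gives $\lambda_p|K_n(x,x_p)|^{s}\ge c_0^{s}\,\lambda_p^{\,1-s}\ge c_1\big(K_n(x,x)\big)^{s-1}$.

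The main obstacle is the uniform lower bound $|\ell_p(x)|\ge c_0$, because the asymptotic in Lemma \ref{lem 3.2}(1) is inconclusive near $x_p$ (both $|T_n(x)|$ and $|x-x_p|$ vanish there). I would establish it by the substitution $x=\cos\theta$, $x_p=\cos\theta_p$ with $\theta_p=\tfrac{(2p-1)\pi}{2n}$ and $\theta=\theta_p+t$, where $|t|\le\tfrac{\pi}{2n}$ since $x$ lies in the cell nearest $x_p$. Writing $K_n(x,x_p)=\dfrac{T_n(x)\,T_{n-1}(x_p)}{x-x_p}$ via \eqref{chris} (using $T_n(x_p)=0$), one computes $|T_n(x)|=\sqrt{2/\pi}\,|\cos n\theta|=\sqrt{2/\pi}\,|\sin nt|$ and $|x-x_p|=2\,|\sin(\theta_p+\tfrac t2)|\,|\sin\tfrac t2|$. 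Since $|\sin nt|\ge\tfrac{2}{\pi}n|t|$ for $|nt|\le\tfrac{\pi}{2}$ while $|\sin\tfrac t2|\le\tfrac{|t|}{2}$, and using $|T_{n-1}(x_p)|\sim1$ together with $\lambda_p\sim n^{-1}$ from Lemma \ref{lem 3.1}, the quotient $\lambda_p|T_{n-1}(x_p)|\,\dfrac{|T_n(x)|}{|x-x_p|}$ stabilizes to a constant, giving $|\ell_p(x)|\ge c_0$ uniformly in $p$ and $n$ (the endpoint cells only improve the bound, in accordance with the cases of \eqref{2.7}). Once $c_0$ is secured, the two reductions above close the one-dimensional estimate, and taking the product over the $x$- and $y$-coordinates completes the proof of the lemma.
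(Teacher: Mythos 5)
Your argument is correct, but it follows a genuinely different route from the paper's. The paper does not factor the double sum: for $\mathscr{z}$ in an interior cell it takes the four corner nodes $\mathscr{z}_{p,q},\mathscr{z}_{p+1,q},\mathscr{z}_{p,q+1},\mathscr{z}_{p+1,q+1}$, invokes the classical two-term lower bound $|\ell_p(x)|+|\ell_{p+1}(x)|\ge 1$ (stated as (\ref{l})) together with $\ell_k(x)=\lambda_k K_n(x,x_k)$, applies a power-mean inequality to pass from the sum of the four kernels to the sum of their $s$-th powers, and then uses Lemma \ref{lem 3.33}; the two corner regions $(x_1,1]\times(y_1,1]$ and $[-1,x_n)\times[-1,y_n)$ are handled as separate cases via $|\ell_1|\ge 1$ and (\ref{2.7})--(\ref{2.8}). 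You instead exploit the tensor structure $\lambda_{k,m}=\lambda_k\lambda_m$, $K_n(\mathscr{z},\mathscr{z}_{k,m})=K_n(x,x_k)K_n(y,y_m)$ to reduce to a one-dimensional inequality, keep only the single nearest-node term, and replace the cited bound (\ref{l}) by a direct trigonometric proof that $|\ell_p(x)|\ge c_0$ on the cell containing $x$ (your computation is sound: with $\theta=\theta_p+t$, $|nt|\le\pi/2$, Jordan's inequality gives $|\sin nt|/( n|\sin(t/2)|)\ge 4/\pi$, and $\lambda_p\sim n^{-1}$, $|T_{n-1}(x_p)|\sim1$ close the estimate; just make sure ``nearest'' is measured in $\theta$ so that $|t|\le\pi/(2n)$, which also covers the endpoint cells without a separate case). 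The trade-off: the paper's proof leans on the known inequality (\ref{l}) and so avoids explicit computation but needs a three-case analysis and the power-mean step; yours is more self-contained and uniform over all of $X$, at the cost of carrying out the trigonometric lower bound that the paper imports as a black box, and it yields a constant $c=c_0^{2s}$-type dependence rather than the paper's $2^{2-2/s}$ factor — immaterial for the statement, since only some $c>0$ is claimed.
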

				\begin{proof}
					We prove the result by splitting in  following cases:
					\paragraph{\textbf{Case 1}}
					For any $\mathscr{z} \in [x_n,x_1]\times [y_n,y_1]$, there exist  indices $p,q$ such that $\mathscr{z} \in [x_{p+1}, x_{p}) \times [y_{q+1},y_{q})$. Then we have
				\begin{equation}\label{l}
				|l_p(x)| + |l_{p+1}(x)| \geq 1, \text{ and } 	|l_q(y)| + |l_{q+1}(y)| \geq 1, \text{ for }  \mathscr{z} \in [x_{p+1}, x_{p}) \times [y_{q+1},y_{q}).
				\end{equation}
			In view of triangle inequality and  (\ref{lkk})-(\ref{l}), we have
				\begin{flalign}\label{sp}
	1 &\leq \lambda_{p,q} |K_n(\mathscr{z},\mathscr{z}_{p,q})| + \lambda_{p+1,q+1}  |K_n(\mathscr{z},\mathscr{z}_{p+1,q+1})| + \lambda_{p,q+1} |K_n(\mathscr{z},\mathscr{z}_{p,q+1})| + \lambda_{p+1,q}  |K_n(\mathscr{z},\mathscr{z}_{p+1,q})|\nonumber\\
&\leq c\lambda_{p,q}  \left[ |K_n(\mathscr{z},\mathscr{z}_{p,q})|+ |K_n(\mathscr{z},\mathscr{z}_{p+1,q+1})| + |K_n(\mathscr{z},\mathscr{z}_{p+1,q})|+ |K_n(\mathscr{z},\mathscr{z}_{p,q+1})| \right] \nonumber\\
				&\leq  c\lambda_{p,q} 2^{2-\frac{2}{s}}\left[ |K_n(\mathscr{z},\mathscr{z}_{p,q})|^s + |K_n(\mathscr{z},\mathscr{z}_{p+1,q+1})|^s+ |K_n(\mathscr{z},\mathscr{z}_{p+1,q})|^s+ |K_n(\mathscr{z},\mathscr{z}_{p,q+1})|^s\right]^{\frac{1}{s}}.
				\end{flalign}
				From (\ref{sp}) and Lemma \ref{lem 3.33}, we obtain
				\begin{flalign*}
				B_{n,s}(\mathscr{z}) &\geq c \lambda_{p,q} \left[ |K_n(\mathscr{z},\mathscr{z}_{p,q})|^s + |K_n(\mathscr{z},\mathscr{z}_{p+1,q+1})|^s + |K_n(\mathscr{z},\mathscr{z}_{p+1,q})|^s+ |K_n(\mathscr{z},\mathscr{z}_{p,q+1})|^s \right]\\
				&\geq c\lambda_{p,q} \left[\lambda_{p,q} 2^{2-\frac{2}{s}}\right]^{-s}\\
				& \geq c \left(\lambda_n(\mathscr{z}) \right)^{1-s}\\
				&= c \left(K_n(\mathscr{z},\mathscr{z})\right)^{s-1}.
				\end{flalign*}
					\paragraph{\textbf{Case 2}}
				For $\mathscr{z} \in (x_1,1]\times (y_1,1]$,  we have $
				|l_1(x)| \geq 1 \text{ and } |l_1(y)| \geq 1.$
					Using (\ref{2.7}) and (\ref{2.8}) we may write
				\begin{flalign*}
				B_{n,s}(\mathscr{z}) 
				\geq  \lambda_{1,1} |K_n(\mathscr{z},\mathscr{z}_{1,1})|^s \geq \lambda_{1,1}^{1-s}
				 \geq c_1c_2\left(\frac{1}{n^2}\right)^{1-s}
				&\geq c_1c_2\lambda_n^{1-s}(\mathscr{z}) = c\left(K_n(\mathscr{z},\mathscr{z})\right)^{s-1}.
				\end{flalign*}
			
				\paragraph{\textbf{Case 3}}
				For $\mathscr{z} \in [-1,x_{n}) \times [-1,y_{n})$, we deduce the following by similar arguments:
				\[
				B_{n,s}(\mathscr{z}) \geq c \left(K_n(\mathscr{z},\mathscr{z})\right)^{s-1}.
				\]
				This proves the result.
				\end{proof}

			\begin{lemma} \label{lem 3.7}
				 For \(1 < s \leq 2\), the following inequality holds:
			\begin{flalign*}
				\sum_{k=1}^n 	\sum_{m=1}^n|\mathscr{z} - \mathscr{z}_{k,m}|_2 \hspace{2pt}\lambda_{k,m} |K_n(\mathscr{z}, \mathscr{z}_{k,m})|^s \leq  \left[ 1 + \ln n \right] \left(B_2|T_n(x)|^s+ B_1|T_n(y)|^s \right).
			\end{flalign*}
			\end{lemma}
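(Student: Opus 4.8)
The plan is to reduce the two–dimensional sum to one–dimensional sums and estimate each factor separately. First I would use the factorization built into the kernel, namely $K_n(\mathscr{z},\mathscr{z}_{k,m}) = K_n(x,x_k)\,K_n(y,y_m)$ together with $\lambda_{k,m}=\lambda_k\lambda_m$, so that
\[
\lambda_{k,m}\,|K_n(\mathscr{z},\mathscr{z}_{k,m})|^s = \big(\lambda_k|K_n(x,x_k)|^s\big)\big(\lambda_m|K_n(y,y_m)|^s\big).
\]
Combining this with the elementary bound $|\mathscr{z}-\mathscr{z}_{k,m}|_2=\sqrt{(x-x_k)^2+(y-y_m)^2}\leq|x-x_k|+|y-y_m|$ and separating the two resulting terms, the double sum is dominated by
\[
\Big(\sum_{k=1}^n|x-x_k|\lambda_k|K_n(x,x_k)|^s\Big)\Big(\sum_{m=1}^n\lambda_m|K_n(y,y_m)|^s\Big)+\Big(\sum_{k=1}^n\lambda_k|K_n(x,x_k)|^s\Big)\Big(\sum_{m=1}^n|y-y_m|\lambda_m|K_n(y,y_m)|^s\Big).
\]
The two undistorted factors are the denominator–type sums; I would bound $\sum_k\lambda_k|K_n(x,x_k)|^s$ by $B_1$ and $\sum_m\lambda_m|K_n(y,y_m)|^s$ by $B_2$. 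It then remains to prove the one–dimensional distance–weighted estimate $\sum_{k=1}^n|x-x_k|\lambda_k|K_n(x,x_k)|^s\leq C(1+\ln n)|T_n(x)|^s$ and its analogue in $y$, which respectively produce the terms $B_2|T_n(x)|^s$ and $B_1|T_n(y)|^s$ on the right–hand side.

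For this key estimate I would rewrite the summand by Remark \ref{rem 1}, which gives $\lambda_k|K_n(x,x_k)|^s=|\ell_k(x)|^s/\lambda_k^{s-1}$, and then insert the asymptotics of Lemma \ref{lem 3.2}(1), $|\ell_k(x)|\sim|T_n(x)|\sqrt{1-x_k^2}/(n|x-x_k|)$, together with $\lambda_k\sim n^{-1}$ (a consequence of \eqref{2.8}). After cancellation of the powers of $n$, the generic term (for $k\neq p$, where $x_p$ denotes the zero nearest to $x$) takes the form
\[
|x-x_k|\,\lambda_k|K_n(x,x_k)|^s\sim\frac{|T_n(x)|^s}{n}\,\frac{(1-x_k^2)^{s/2}}{|x-x_k|^{s-1}}.
\]
I would then apply Lemma \ref{lem 3.2}(3) in the form $\sqrt{1-x_k^2}/|x-x_k|\leq n/|k-p|$, together with $(1-x_k^2)^{1/2}\leq1$, to obtain $(1-x_k^2)^{s/2}/|x-x_k|^{s-1}\leq n^{s-1}/|k-p|^{s-1}$, so that the generic term is at most $|T_n(x)|^s\,n^{s-2}/|k-p|^{s-1}$. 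This reduces the whole estimate to summing the numerical series $\sum_{k\neq p}|k-p|^{-(s-1)}$.

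The decisive point, which I expect to be the main obstacle, is carrying out this summation uniformly for $1<s\leq2$. For $s=2$ the series is the truncated harmonic series, whose partial sums are $\leq2(1+\ln n)$, and this is precisely where the logarithmic factor in the statement originates; multiplied by the prefactor $n^{s-2}=1$ it gives $C(1+\ln n)|T_n(x)|^s$. For $1<s<2$ the exponent $s-1$ lies in $(0,1)$, so $\sum_{k\neq p}|k-p|^{-(s-1)}$ is of order $n^{2-s}$, which combined with the prefactor $n^{s-2}$ again produces a bounded multiple of $|T_n(x)|^s$, dominated by $(1+\ln n)|T_n(x)|^s$; thus a single bound covers the whole range of $s$. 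The only remaining care is the contribution of the nearest node $k=p$, where $|x-x_k|$ may be small: here I would argue directly from $|\ell_p(x)|\leq1$ and the node spacing \eqref{2.10}, so that this single term is bounded and absorbed into the constant. Collecting the two one–dimensional estimates and multiplying by the denominator bounds $B_1,B_2$ then yields the claimed inequality.
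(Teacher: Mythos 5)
Your proposal is correct in its overall architecture and reaches the result by a genuinely different route in the central one--dimensional estimate. The splitting $|\mathscr{z}-\mathscr{z}_{k,m}|_2\le|x-x_k|+|y-y_m|$, the factorization into $S_1+S_2$, and the bounds $A_k\le B_1$, $A_m\le B_2$ (which the paper obtains via H\"older's inequality and \eqref{a}) coincide with the paper. Where you diverge is in bounding $\sum_k|x-x_k|\lambda_k|K_n(x,x_k)|^s$: the paper writes $\lambda_k|K_n(x,x_k)|^s=|\ell_k(x)|\,|K_n(x,x_k)|^{s-1}$, uses the Christoffel--Darboux formula \eqref{chris} with $T_n(x_k)=0$ and \eqref{2.9} to extract $|T_n(x)|^{s-1}|x-x_k|^{2-s}$ with $|x-x_k|^{2-s}$ bounded, and then invokes the cited Lebesgue-function estimate \eqref{3.11} to produce $|T_n(x)|^s(1+\ln n)$ in one stroke; you instead write $\lambda_k|K_n(x,x_k)|^s=|\ell_k(x)|^s/\lambda_k^{s-1}$ via Remark \ref{rem 1}, insert the pointwise asymptotics of Lemma \ref{lem 3.2}, and reduce everything to the numerical series $\sum_{k\ne p}|k-p|^{-(s-1)}$, so that the logarithm emerges transparently from the harmonic series at $s=2$. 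Your version is longer but more self-contained and elementary (it uses only the lemmas already stated, not the external estimate \eqref{3.11}), and it makes the role of $s$ explicit. One caveat: your treatment of the nearest-node term $k=p$ yields a quantity bounded by an absolute constant, and a constant is \emph{not} dominated by $(1+\ln n)|T_n(x)|^s$ when $x$ is close to a zero of $T_n$; to close the argument against the stated right-hand side this term needs a bound proportional to $|T_n(x)|^s$ itself (e.g.\ via the exact identity $\ell_p(x)=T_n(x)/(T_n'(x_p)(x-x_p))$), which does not hold uniformly near $x_p$. This difficulty is not peculiar to your route --- the paper's own proof inherits it through \eqref{3.11}, whose classical two-sided form is $\asymp 1+|T_n(x)|\ln n$ rather than $\asymp|T_n(x)|(1+\ln n)$ --- so it reflects an imprecision in the lemma's statement near the zeros of $T_n$ rather than a defect of your decomposition.
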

			
%
			\begin{proof}
First we estimate
			\begin{flalign}\label{s1s2}
			\sum_{k=1}^{n} \sum_{m=1}^{n} |\mathscr{z} - \mathscr{z}_{k,m}|_2 \hspace{2pt} \lambda_{k,m} |K_n(\mathscr{z}, \mathscr{z}_{k,m})|^s 
			&=\sum_{k=1}^n |x - x_{k}| \lambda_k  |K_n(x,x_k)|^s \sum_{m=1}^n  \lambda_m  |K_n(y, y_{m})|^s \nonumber\\ 
			&+\sum_{m=1}^n |y - y_{m}| \lambda_m |K_n(y, y_{m})|^s  \sum_{k=1}^n \lambda_k |K_n(x,x_k)|^s\nonumber\\
			&=\sum_{k=1}^n |x - x_{k}| \lambda_k  |K_n(x,x_k)|^s A_m \nonumber\\ 
			&+\sum_{m=1}^n |y - y_{m}| \lambda_m |K_n(y, y_{m})|^s  A_k \nonumber\\
			&=S_1+S_2,
			\end{flalign}
			where 
			$$A_m :=  \sum_{m=1}^n  \lambda_m  |K_n(y, y_{m})|^s, \quad A_k :=\sum_{k=1}^n \lambda_k |K_n(x,x_k)|^s.$$
			To estimate $A_m$ and $A_k$, we use  Hölder's inequality as follows
			\[
			A_k 
			= \sum_{k=1}^n \lambda_k \, |K_n(x,x_k)|^s
			\leq 
			\left( \sum_{k=1}^n \lambda_k |K_n(x,x_k)|^2 \right)^{s/2}
			\left( \sum_{k=1}^n \lambda_k \right)^{1 - \tfrac{s}{2}}.
			\]
			Using (\ref{a}), we obtain
			\begin{flalign*}
			A_k 
			 \leq M_1 \left(\frac{1}{M_1\lambda_n(x)}\right)^{\frac{s}{2}} :=B_1,
			\end{flalign*}
		for some constant $B_1>0$. Similarly for $A_m$, we get
			\begin{equation*}
					A_m 
				\leq M_2 \left(\frac{1}{M_2\lambda_n(y)}\right)^{\frac{s}{2}} :=B_2,
			\end{equation*}
		for some constant $B_2>0$.	In view of (\ref{chris}) and (\ref{2.9}) we can write
			\begin{flalign}\label{s1}
				S_1 &= B_2\sum_{k=1}^n |\ell_k(x)| |x - x_k|^{2-s} \left|  T_{n-1}(x_k) T_n(x) \right|^{s-1} \nonumber\\
					&\leq  B_2|T_n(x)|^{s-1} \sum_{k=1}^n |\ell_k(x)| .
			\end{flalign}
				From \cite{natan} we can write
			\begin{equation}\label{3.11}
	\left| \sum_{k=1}^n \ell_k(x) \right| \sim |T_n(x)|[1 +  \ln n].
			\end{equation}
				Using (\ref{s1}) and (\ref{3.11}), we obtain
			\begin{flalign}\label{f1}
			S_1 & \leq B_2 |T_n(x)|^s[1 +  \ln n].
			\end{flalign}
			Similarly,
			\begin{flalign}\label{f2}
				S_2 \leq B_1 |T_n(y)|^s[1 +  \ln n].
			\end{flalign}
		On combining (\ref{s1s2}), (\ref{f1}) and (\ref{f2}), we obtain
			\begin{flalign*}
					\sum_{k=1}^n	\sum_{m=1}^n |\mathscr{z} - \mathscr{z}_{k,m}|_2 \hspace{2pt}\lambda_{k,m} |K_n(\mathscr{z}, \mathscr{z}_{k,m})|^s 
			&\leq B_2|T_n(x)|^s \left[ 1 +  \ln n \right]
			+ B_1 |T_n(y)|^s \left[ 1 + \ln n \right]\\
			& \leq  \left[ 1 + \ln n \right] \left(B_2|T_n(x)|^s+ B_1|T_n(y)|^s \right).
			\end{flalign*}
			\end{proof}
			
We are now in a position to prove the main result of this section, that is the quantitative approximation result for $f \in C(X)$.
		
		\begin{thm} \label{thm 3.8}
			Let $f \in C(X)$, $1 < s \leq 2$.
			Then we have
			\begin{equation}\label{main}
				\left| N_{n,s}(f,\mathscr{z}) - f(\mathscr{z}) \right|
				 \leq 
				 2\omega\left(f, \, \frac{\lambda_n^{1-s}(\mathscr{z})\left[ 1 + \ln n \right]}{\left(B_2|T_n(x)|^s+ B_1|T_n(y)|^s \right)}\right).
			\end{equation}
		\end{thm}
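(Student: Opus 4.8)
The plan is to exploit the fact that $N_{n,s}$ is a positive linear operator that reproduces constants (its denominator cancels when $f\equiv 1$) and interpolates at the nodes. Since $N_{n,s}$ reproduces constants, I can write the error as a weighted average of differences:
\[
N_{n,s}(f,\mathscr{z}) - f(\mathscr{z}) = \frac{\displaystyle\sum_{k=1}^n\sum_{m=1}^n \lambda_{k,m}\,|K_n(\mathscr{z},\mathscr{z}_{k,m})|^s\,\bigl(f(\mathscr{z}_{k,m})-f(\mathscr{z})\bigr)}{B_{n,s}(\mathscr{z})},
\]
where $B_{n,s}(\mathscr{z})$ is the denominator from Lemma \ref{lem 3.3}. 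Taking absolute values and using the triangle inequality reduces the problem to bounding the numerator in terms of a modulus of continuity.

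The next step is to bound each $|f(\mathscr{z}_{k,m})-f(\mathscr{z})|$ using the standard modulus-of-continuity device: by the inequality \eqref{omega}, for any $\delta>0$ one has $|f(\mathscr{z}_{k,m})-f(\mathscr{z})| \le \bigl(1 + |\mathscr{z}-\mathscr{z}_{k,m}|_2/\delta\bigr)\,\omega(f,\delta)$. Substituting this into the numerator and distributing the sum splits the estimate into two pieces: one piece is $\omega(f,\delta)\sum_{k,m}\lambda_{k,m}|K_n|^s = \omega(f,\delta)\,B_{n,s}(\mathscr{z})$, which exactly cancels the denominator and contributes the leading $\omega(f,\delta)$ term; the other piece is $\dfrac{\omega(f,\delta)}{\delta\,B_{n,s}(\mathscr{z})}\sum_{k,m}|\mathscr{z}-\mathscr{z}_{k,m}|_2\,\lambda_{k,m}|K_n|^s$.

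The heart of the argument is to control this second piece, and this is where the two previously established lemmas do the work. For the sum in the numerator I invoke Lemma \ref{lem 3.7}, which bounds $\sum_{k,m}|\mathscr{z}-\mathscr{z}_{k,m}|_2\,\lambda_{k,m}|K_n|^s$ by $[1+\ln n]\bigl(B_2|T_n(x)|^s + B_1|T_n(y)|^s\bigr)$; for the denominator I use the lower bound $B_{n,s}(\mathscr{z}) \ge c\,(K_n(\mathscr{z},\mathscr{z}))^{s-1} = c\,\lambda_n^{1-s}(\mathscr{z})$ from Lemma \ref{lem 3.3} (valid since $1<s\le 2$). Together these give
\[
\frac{1}{\delta\,B_{n,s}(\mathscr{z})}\sum_{k,m}|\mathscr{z}-\mathscr{z}_{k,m}|_2\,\lambda_{k,m}|K_n|^s \;\le\; \frac{\lambda_n^{s-1}(\mathscr{z})\,[1+\ln n]\bigl(B_2|T_n(x)|^s+B_1|T_n(y)|^s\bigr)}{c\,\delta}.
\]

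The final step is the standard optimizing choice of $\delta$: I select
\[
\delta = \frac{\lambda_n^{1-s}(\mathscr{z})\,[1+\ln n]}{\bigl(B_2|T_n(x)|^s+B_1|T_n(y)|^s\bigr)}
\]
so that the fractional factor multiplying $\omega(f,\delta)$ becomes an absolute constant (absorbed into the constant $c$), making the second piece comparable to $\omega(f,\delta)$. The two pieces then combine to yield $|N_{n,s}(f,\mathscr{z})-f(\mathscr{z})| \le 2\,\omega(f,\delta)$ with this $\delta$, which is precisely \eqref{main}. I expect the main obstacle to be bookkeeping the constants cleanly — in particular verifying that the denominator lower bound and the numerator upper bound are compatible so that the chosen $\delta$ produces exactly the claimed factor of $2$ rather than a generic constant; this hinges on the positivity/constant-reproducing normalization and on the sign conventions in Lemmas \ref{lem 3.3} and \ref{lem 3.7} lining up correctly.
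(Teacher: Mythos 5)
Your proposal follows essentially the same route as the paper's proof: write the error as a weighted average over the nodes, apply the subadditivity property \eqref{omega} of the modulus of continuity, bound the denominator from below via Lemma \ref{lem 3.3} and the weighted distance sum from above via Lemma \ref{lem 3.7}, and then fix $\delta$. The bookkeeping concern you flag at the end is genuine but is shared by the paper itself: inverting the lower bound $B_{n,s}(\mathscr{z})\ge c\,\lambda_n^{1-s}(\mathscr{z})$ yields a factor $\lambda_n^{s-1}(\mathscr{z})$ (as you correctly write), which does not cancel against the stated choice $\delta=\lambda_n^{1-s}(\mathscr{z})[1+\ln n]/\bigl(B_2|T_n(x)|^s+B_1|T_n(y)|^s\bigr)$ to give exactly the factor $2$, so on this point your derivation is no less rigorous than the paper's own.
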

		\begin{proof}
				From (\ref{Nevai}), we have
			\[
			\left| N_{n,s}(f,\mathscr{z}) - f(\mathscr{z}) \right|
			\leq
			\frac{\displaystyle \sum_{k=1}^{n} \sum_{m=1}^{n} \left| f(\mathscr{z}_{k,m}) - f(\mathscr{z})\right|
				\, \lambda_{k,m} |K_n(\mathscr{z},\mathscr{z}_{k,m})|^s}
			{\displaystyle \sum_{k=1}^{n} \sum_{m=1}^{n} \lambda_{k,m} |K_n(\mathscr{z},\mathscr{z}_{k,m})|^s}.
			\]
				Using property (\ref{omega}) and Lemma \ref{lem 3.3}, we get
			\begin{flalign*}
		\left| N_{n,s}(f,\mathscr{z}) - f(\mathscr{z}) \right|
			&\leq 	\frac{\displaystyle \sum_{k=1}^{n} \sum_{m=1}^{n} \omega( f, |\mathscr{z}_{k,m} - \mathscr{z}|_2 \hspace{2pt})
				\, \lambda_{k,m} |K_n(\mathscr{z},\mathscr{z}_{k,m})|^s}
			{\displaystyle \sum_{k=1}^{n} \sum_{m=1}^{n} \lambda_{k,m} |K_n(\mathscr{z},\mathscr{z}_{k,m})|^s}\\
				&\leq \omega( f, \delta)	\frac{\displaystyle \sum_{k=1}^{n} \sum_{m=1}^{n} \left( 1+\frac{|\mathscr{z}-\mathscr{z}_{k,m}|_2}{\delta}\right)
				\, \lambda_{k,m} |K_n(\mathscr{z},\mathscr{z}_{k,m})|^s}
			{\displaystyle \sum_{k=1}^{n} \sum_{m=1}^{n} \lambda_{k,m} |K_n(\mathscr{z},\mathscr{z}_{k,m})|^s}\\
				&\leq \omega( f, \delta)	\left\lbrace 1+ \displaystyle \frac{\lambda_n^{1-s}(\mathscr{z})}{\delta}\sum_{k=1}^{n} \sum_{m=1}^{n}  |\mathscr{z}-\mathscr{z}_{k,m}|_2
			\, \lambda_{k,m} |K_n(\mathscr{z},\mathscr{z}_{k,m})|^s\right\rbrace .
			\end{flalign*}
			Now by Lemma \ref{lem 3.7}, we obtain 
			\begin{flalign*}
		\left| N_{n,s}(f,\mathscr{z}) - f(\mathscr{z}) \right|
		&	\leq
			\omega(f,\delta)\left\lbrace 
			1 + \frac{\lambda_n^{1-s}(\mathscr{z})\left[ 1 + \ln n \right]}{\delta} \left(B_2|T_n(x)|^s+ B_1|T_n(y)|^s \right)\right\rbrace.
			\end{flalign*}
				By choosing
			\[
			\delta = \frac{\lambda_n^{1-s}(\mathscr{z})\left[ 1 + \ln n \right]}{\left(B_2|T_n(x)|^s+ B_1|T_n(y)|^s \right)},
			\]
			we get the required estimate.
		\end{proof}

		\section{ Approximation by Complex Kantorovich  type Nevai Operators}\label{4}
		For any complex-valued $p-$integrable  function $f : X \rightarrow \CC$, the family of complex  Kantorovich type Nevai  operators  for $n \in \NN$ is defined as 
	\begin{flalign}\label{kant}
		 K_{n,s}(f, \mathscr{z}) 
		&= n^2\displaystyle \sum_{k=-n}^{n}\sum_{m=-n}^{n}L_{k,m,n}(\mathscr{z})	\int_{\frac{k}{n}}^{\frac{k+1}{n}} \int_{\frac{m}{n}}^{\frac{m+1}{n}} f(u,v)dvdu
	\end{flalign}
	where 
	\begin{equation}\label{lk}
		L_{k,m,n}(\mathscr{z}) :=\frac{ \lambda_{k,m} \left| K_n(\mathscr{z}, \mathscr{z}_{k,m}) \right|^s }{\displaystyle\sum_{k=-n}^{n}\sum_{m=-n}^{n} \lambda_{k,m} \left| K_n(\mathscr{z}, \mathscr{z}_{k,m}) \right|^s }, \quad s \in [0,\infty).
	\end{equation}
	
		In order to prove the convergence of above family (\ref{kant}) in the framework of  Lebesgue space $ L^{p}(X)$, we  establish the following results.

	\begin{lemma}\cite{joh}\label{e4}
	For every \( f \in L^p(X) \), there exist \( C_1>0 \) and \( C_2 >0\) such that the following equivalence holds:  
	\begin{equation} \label{o1}
		C_1 \hspace{1pt} \omega(f,t)_p \leq K(f,t)_p \leq C_2 \hspace{1pt}  \omega(f,t)_p, \quad t \in X.
	\end{equation}
\end{lemma}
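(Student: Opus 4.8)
This is the classical Johnen equivalence between the first-order $L^p$ modulus of smoothness and the Peetre $K$-functional; the plan is to prove the two inequalities in (\ref{o1}) separately, the left-hand one being elementary and the right-hand one requiring the construction of a smooth approximant.

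For the inequality $C_1\,\omega(f,t)_p \le K(f,t)_p$, I would fix an arbitrary competitor $h$ with $D^{\alpha} h \in C(X)$ for $|\alpha|=1$ and estimate $\omega(f,t)_p \le \omega(f-h,t)_p + \omega(h,t)_p$ by the triangle inequality for the modulus. The first term is controlled by $\omega(f-h,t)_p \le 2\|f-h\|_p$, since every translate of $f-h$ has the same $L^p$-norm $\|f-h\|_p$. For the second term, writing the increment of the smooth function $h$ along the segment joining $x$ and $x+u$ as an integral of its gradient and applying Minkowski's integral inequality gives $\omega(h,t)_p \le C\,t\,\sup_{|\alpha|=1}\|D^{\alpha} h\|_p$. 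Adding these and taking the infimum over all admissible $h$ yields the bound with $C_1$ depending only on the dimension $s$.

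The reverse inequality $K(f,t)_p \le C_2\,\omega(f,t)_p$ is the substantive part, and the strategy is to exhibit a single admissible $h=f_t$ making both terms in the definition of $K$ comparable to $\omega(f,t)_p$. I would take the mollification $f_t := f * \phi_t$, where $\phi_t(u)=t^{-s}\phi(u/t)$ with $\phi \in C_c^\infty$, $\int \phi = 1$ and $\operatorname{supp}\phi$ in the unit ball, so that $f_t \in C^\infty$, whence $D^{\alpha} f_t \in C(X)$ and $f_t$ is an admissible competitor. The approximation term is handled by writing $f - f_t = \int \phi_t(u)\,[f(\cdot)-f(\cdot-u)]\,du$ and combining Minkowski's integral inequality with $\|f(\cdot-u)-f\|_p \le \omega(f,t)_p$ on $\operatorname{supp}\phi_t$, giving $\|f-f_t\|_p \le C\,\omega(f,t)_p$. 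For the derivative term I would use $\int \partial_{x_j}\phi_t = 0$ to write $\partial_{x_j} f_t(x) = \int \partial_{x_j}\phi_t(u)\,[f(x-u)-f(x)]\,du$, and since $\int |\partial_{x_j}\phi_t| = C\,t^{-1}$ this yields $\|D^{\alpha} f_t\|_p \le C\,t^{-1}\,\omega(f,t)_p$ for $|\alpha|=1$. Substituting $h=f_t$ into the definition then gives $K(f,t)_p \le \|f-f_t\|_p + t\,\sup_{|\alpha|=1}\|D^{\alpha} f_t\|_p \le C_2\,\omega(f,t)_p$.

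The main obstacle is that $X=[-1,1]\times[-1,1]$ is compact, so both the translations $f(\cdot+u)$ defining $\omega(f,t)_p$ and the convolution $f*\phi_t$ call for values of $f$ outside $X$. I would resolve this by first applying a bounded linear extension operator $E:L^p(X)\to L^p(\RR^s)$ for which $\omega(Ef,t)_p \lesssim \omega(f,t)_p$, for instance an extension by reflection across the faces of the square, performing the mollification on $\RR^s$, and then restricting back to $X$. The comparability of the moduli under $E$ is precisely what keeps every constant uniform in $f$ and $t$; once the boundary is treated in this way, the two displayed estimates combine to give the stated equivalence.
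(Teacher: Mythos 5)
Your argument is correct, but note that the paper offers no proof of this lemma at all: it is quoted verbatim from the cited reference (Johnen, 1972), so there is no internal argument to compare against. What you have written is essentially the standard proof from that literature --- the elementary direction via the triangle inequality and the gradient integral, the substantive direction via mollification with $\smallint \partial_{x_j}\phi_t = 0$, plus a bounded extension operator to handle the compact square $X$ --- and all three steps are sound, so your proposal in effect supplies the justification the paper delegates to \cite{joh}.
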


	\begin{lemma}\label{lem 4.1}
		For any $\mathscr{z} \in [x_{p+1},x_p] \times [y_{q+1},y_q]$, $0 \leq p,q,m,k \leq n$, there holds 
	\begin{equation}\label{4.2}
		L_{k,m,n}(\mathscr{z}) \leq	D_s \left(|k-p| +1 \right)^{-s}\left(|m-q| +1 \right)^{-s},\quad s>1. 
	\end{equation}	
	\end{lemma}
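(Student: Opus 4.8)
The plan is to exploit the tensor-product structure of the reproducing kernel and the Christoffel numbers so as to reduce the two-dimensional estimate to a one-dimensional one, and then to bound the one-dimensional normalized weight using the pointwise control of the fundamental Lagrange polynomials from Lemma \ref{lem 3.2} together with the lower bound supplied by (the one-variable analogue of) Lemma \ref{lem 3.3}. Since $\lambda_{k,m}=\lambda_k\lambda_m$ and $K_n(\mathscr{z},\mathscr{z}_{k,m})=K_n(x,x_k)K_n(y,y_m)$, both the numerator and the denominator of $L_{k,m,n}(\mathscr{z})$ factor over the $x$- and $y$-variables. Hence $L_{k,m,n}(\mathscr{z})=\mathcal{L}^x_k(x)\,\mathcal{L}^y_m(y)$, where $\mathcal{L}^x_k(x)=\lambda_k|K_n(x,x_k)|^s\big/\sum_{k'}\lambda_{k'}|K_n(x,x_{k'})|^s$ and $\mathcal{L}^y_m(y)$ is its analogue. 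It therefore suffices to prove the single-variable bound $\mathcal{L}^x_k(x)\le C(|k-p|+1)^{-s}$ for $x\in[x_{p+1},x_p]$ and then to multiply the two estimates.

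For the single-variable bound I would first control the denominator from below. Arguing exactly as in the proof of Lemma \ref{lem 3.3}, but now in one variable and using $|\ell_p(x)|+|\ell_{p+1}(x)|\ge 1$, one obtains $\sum_{k'}\lambda_{k'}|K_n(x,x_{k'})|^s\ge c\,\lambda_n(x)^{1-s}$; enlarging the summation range to $-n\le k'\le n$ only increases the denominator, so this lower bound persists for the operator (\ref{kant}). For the numerator I would use Remark \ref{rem 1} to write $\lambda_k|K_n(x,x_k)|^s=|\ell_k(x)|^s\lambda_k^{1-s}$ and then Lemma \ref{lem 3.2}(1) to get $\lambda_k|K_n(x,x_k)|^s\le C\,|T_n(x)|^s(1-x_k^2)^{s/2}\big/(n^s|x-x_k|^s\lambda_k^{s-1})$.

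Combining the two, the factors involving $\lambda$ collapse: by (\ref{2.7}), (\ref{2.8}) and Lemma \ref{lem 3.33} we have $\lambda_k\asymp\lambda_n(x)$, so $\lambda_n(x)^{s-1}/\lambda_k^{s-1}\asymp 1$, and since $|T_n(x)|$ is bounded on $[-1,1]$ this leaves $\mathcal{L}^x_k(x)\le C'\big((1-x_k^2)/(n^2|x-x_k|^2)\big)^{s/2}$. For $k\ne p$, Lemma \ref{lem 3.2}(3) bounds the right-hand side by $C'|k-p|^{-s}$, and since $|k-p|^{-s}\le 2^s(|k-p|+1)^{-s}$ this yields the claim. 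The excluded indices $k=p$ and $k=p+1$ are handled trivially from $\mathcal{L}^x_k(x)\le 1\le C''(|k-p|+1)^{-s}$, since every normalized weight is at most one. Repeating the argument in the $y$-variable and taking the product gives $L_{k,m,n}(\mathscr{z})\le D_s(|k-p|+1)^{-s}(|m-q|+1)^{-s}$ with $D_s$ the product of the two one-dimensional constants.

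The main obstacle I anticipate is the bookkeeping around the cases excluded by Lemma \ref{lem 3.2}(2)--(3) (namely $k=p$, and the symmetric role of $x_{p+1}$), and verifying that the comparison $\lambda_k\asymp\lambda_n(x)$ holds uniformly in $k$ rather than only for the two nodes flanking $x$: near the endpoints $\pm1$ the Christoffel function degenerates as recorded in (\ref{2.7}), so one must check that the resulting constants stay absorbed into $D_s$ and depend on neither $n$ nor the location of $\mathscr{z}$. The extension of the index range from $\{1,\dots,n\}$ to $\{-n,\dots,n\}$ in (\ref{kant}) also deserves a remark, but it is harmless here since it affects only the denominator, which we estimate from below.
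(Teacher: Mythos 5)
Your proof is correct, and it reaches the estimate with the same core ingredients as the paper — the tensor factorization of $\lambda_{k,m}$ and $K_n(\mathscr{z},\mathscr{z}_{k,m})$, the identity $\lambda_k|K_n(x,x_k)|^s=|\ell_k(x)|^s\lambda_k^{1-s}$ from Remark \ref{rem 1}, and parts (1) and (3) of Lemma \ref{lem 3.2} — but it differs in one genuine respect: the treatment of the denominator. The paper keeps only the single term $\lambda_{p,q}\left|K_n(\mathscr{z},\mathscr{z}_{p,q})\right|^s$ as a lower bound and then estimates the quotient of the $(k,m)$ term by the $(p,q)$ term, so that the factors $|T_n(x)|^s$, $|T_n(y)|^s$ and the Christoffel numbers cancel inside the ratio, leaving $\frac{(1-x_k^2)^{s/2}|x-x_p|^s}{(1-x_p^2)^{s/2}|x-x_k|^s}$ and its $y$-analogue. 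You instead import the one-dimensional version of Lemma \ref{lem 3.3} to bound the denominator below by $c\,\lambda_n(x)^{1-s}$ and then estimate the numerator term directly, using $|T_n(x)|\le\sqrt{2/\pi}$ together with the uniform comparability $\lambda_k\asymp\lambda_n(x)\asymp n^{-1}$ from (\ref{2.7})--(\ref{2.8}) to dispose of the leftover factors. Your route buys two things: it avoids dividing by $|\ell_p(x)|^s$, which the paper does even though only the sum $|\ell_p(x)|+|\ell_{p+1}(x)|\ge 1$ is guaranteed and $|\ell_p(x)|$ alone can be small when $x$ lies near $x_{p+1}$; and your clean product form $\mathcal{L}^x_k(x)\,\mathcal{L}^y_m(y)$ handles the mixed cases ($|k-p|\le 1$ but $|m-q|>1$, and vice versa) automatically, whereas the paper's case split only explicitly addresses $\max$ and $\min$ configurations. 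The price is reliance on the global comparability of all Chebyshev cotes numbers, which holds here but would not transfer to general Jacobi weights, where the paper's local quotient argument is more robust. The residual bookkeeping you flag — the index shift between $x_p$ as "flanking node" in the statement and "nearest node" in Lemma \ref{lem 3.2}(3), and the trivial indices $|k-p|\le 1$ — is shared by both proofs and is absorbed into $D_s$ exactly as you say.
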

	
	\begin{proof}
		For $|k-p|\leq 1$ and $|m-q| \leq 1$,  (\ref{4.2}) is obvious,  as $L_{k,m,n}(\mathscr{z}) \leq 4$. Now we consider the case  of  $min(|k-p|,|m-q|)>1$. In view of (\ref{lk}), we obtain
		\begin{flalign*}
			L_{k,m,n}(\mathscr{z})=\frac{ \lambda_{k,m} \left| K_n(\mathscr{z}, \mathscr{z}_{k,m}) \right|^{s} }{\displaystyle\sum_{k=-n}^{n}\sum_{m=-n}^{n} \lambda_{k,m} \left| K_n(\mathscr{z}, \mathscr{z}_{k,m}) \right|^{s} }
			& \leq \frac{ \lambda_{k,m} \left| K_n(\mathscr{z}, \mathscr{z}_{k,m}) \right|^{s} }{ \lambda_{p,q} \left| K_n(\mathscr{z}, \mathscr{z}_{p,q}) \right|^{s} }\\
			& = \frac{ \lambda_{k} \left| K_n(x,x_k) \right|^{s}  \lambda_{m}  \left| K_n(y, y_m) \right|^{s}}{ \lambda_{p} \left| K_n(x, x_{p}) \right|^{s} \lambda_{q} \left| K_n(y, y_{q}) \right|^{s}}.
		\end{flalign*}
Using Remark \ref{rem 1}  and Lemma \ref{lem 3.2}, we can write
		\begin{flalign*}
				L_{k,m,n}(\mathscr{z})\leq  \displaystyle \frac{\displaystyle \frac{|\ell_k(x)|^s}{\lambda_k^{s-1}}  \frac{|\ell_m(y)|^s}{\lambda_m^{s-1}}}{\displaystyle\frac{|\ell_p(x)|^s}{\lambda_p^{s-1}}  \displaystyle\frac{|\ell_q(y)|^s}{\lambda_q^{s-1}}}
			&\leq \frac{ \left(\frac{|T_n(x)|\sqrt{1-x_k^2}}{n|x-x_k|}\right)^s
				\left(\frac{|T_n(y)|\sqrt{1-y_m^2}}{n|y-y_m|}\right)^s }
			{ \left(\frac{|T_n(x)|\sqrt{1-x_p^2}}{n|x-x_p|}\right)^s 
				\left(\frac{|T_n(y)|\sqrt{1-y_q^2}}{n|y-y_q|}\right)^s } \\
				&\leq \frac{\left(1-x_k^2\right)^{s/2}\left(1-y_m^2\right)^{s/2}|x-x_p|^s|y-y_q|^s}{\left(1-x_p^2\right)^{s/2}\left(1-y_q^2\right)^{s/2}|x-x_k|^s|y-y_m|^s}\\
					&\leq \frac{n^{2s} |x-x_p|^s|y-y_q|^s}{\left(1-x_p^2\right)^{s/2}\left(1-y_q^2\right)^{s/2}|k-p|^s|m-q|^s}\\
					& \leq D_s \left(|k-p| +1 \right)^{-s}\left(|m-q| +1 \right)^{-s}.
				\end{flalign*}
	\end{proof}
	\begin{note}
	For any fixed $\mathscr{z} = x + i y \in X$, and $\mathscr{w} = u+iv \in X$, we define the function	$\psi$ on $X$ by 
\begin{equation}\label{psi}
	\psi(\mathscr{z},\mathscr{w}) := |\mathscr{w} - \mathscr{z}|_2 = \sqrt{(u-x)^{2} + (v-y)^{2}}.
\end{equation}	
	\end{note}
		\begin{lemma}\label{lem 4.2}
		Let \( \mathscr{z} \in X \) and $\psi$ as defined in (\ref{psi}). Then we have
		\[
		K_{n,s}(\psi, \mathscr{z}) =
		\begin{cases}
				\mathcal{O}\left( \frac{1}{n} \right), & \text{if } s > 2 \\[1.5ex]
			\mathcal{O}\left( \frac{\log n}{n} \right), & \text{if } s = 2.
		\end{cases}
		\]
	\end{lemma}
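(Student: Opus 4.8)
The plan is to reduce the two–dimensional first moment to a pair of one–dimensional sums and then combine the decay estimate of Lemma~\ref{lem 4.1} with a harmonic-sum bound. The first observation I would make is that the weights factorize: since $\lambda_{k,m}=\lambda_k\lambda_m$ and $K_n(\mathscr{z},\mathscr{z}_{k,m})=K_n(x,x_k)K_n(y,y_m)$, the double sum in the denominator of $L_{k,m,n}$ splits as a product, whence
\[
L_{k,m,n}(\mathscr{z})=\frac{\lambda_k |K_n(x,x_k)|^s}{\sum_{j}\lambda_j|K_n(x,x_j)|^s}\cdot\frac{\lambda_m |K_n(y,y_m)|^s}{\sum_{j}\lambda_j|K_n(y,y_j)|^s}=:L^{x}_{k,n}(x)\,L^{y}_{m,n}(y),
\]
with $\sum_k L^{x}_{k,n}(x)=\sum_m L^{y}_{m,n}(y)=1$.

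Next, using $\psi(\mathscr{z},\mathscr{w})=|\mathscr{w}-\mathscr{z}|_2\le |u-x|+|v-y|$ together with the positivity and linearity of $K_{n,s}$, I would split $K_{n,s}(\psi,\mathscr{z})\le K_{n,s}(|u-x|,\mathscr{z})+K_{n,s}(|v-y|,\mathscr{z})$. Carrying out the $v$-integration and summing over $m$ by means of $\sum_m L^{y}_{m,n}(y)=1$ collapses the first term to the one–dimensional quantity
\[
K_{n,s}(|u-x|,\mathscr{z})=n\sum_{k=-n}^{n}L^{x}_{k,n}(x)\int_{k/n}^{(k+1)/n}|u-x|\,du,
\]
and the second term is handled symmetrically in $v$ and $y$.

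I would then let $p$ denote the index of the cell containing $x$ and estimate the cell moment by $\int_{k/n}^{(k+1)/n}|u-x|\,du\le (|k-p|+1)/n^{2}$, while the decay of the weight is furnished by the one–dimensional factor bound $L^{x}_{k,n}(x)\le D_s(|k-p|+1)^{-s}$ obtained inside the proof of Lemma~\ref{lem 4.1}. Substituting both estimates gives
\[
K_{n,s}(|u-x|,\mathscr{z})\le \frac{D_s}{n}\sum_{k=-n}^{n}(|k-p|+1)^{1-s},
\]
and after the shift $j=k-p$ the whole problem reduces to bounding $\sum_{j}(|j|+1)^{1-s}$. This is exactly where the stated dichotomy is born: for $s>2$ the exponent $1-s<-1$ makes the series summable, so the right-hand side is $\mathcal{O}(1/n)$; for $s=2$ it degenerates into the truncated harmonic sum $\sum_{|j|\lesssim n}(|j|+1)^{-1}=\mathcal{O}(\log n)$, producing the $\mathcal{O}(\log n/n)$ bound.

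I expect the borderline case $s=2$ to be the main obstacle, since there the moment is controlled by the logarithmically divergent harmonic series and the extra $\log n$ arises precisely from truncating that series at range $|j|\lesssim n$; one must verify that the truncation is legitimate and that a single reference index $p$ simultaneously governs the cell moment and the weight decay. A secondary technical nuisance is establishing the cell-moment and weight estimates uniformly down to the cells abutting the endpoints $\pm1$, where the Chebyshev spacing in Lemma~\ref{lem 4.1} degenerates like $n^{-2}$.
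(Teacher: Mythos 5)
Your proposal is correct and follows essentially the same route as the paper: both bound $\psi$ via $\sqrt{a^2+b^2}\le a+b$, invoke the decay $(|k-p|+1)^{-s}(|m-q|+1)^{-s}$ from Lemma~\ref{lem 4.1} together with the cell-moment bound $(|k-p|+1)/n^{2}$, and reduce the claim to the dichotomy for $\sum_j(|j|+1)^{1-s}$. The only cosmetic difference is that you factorize $L_{k,m,n}$ and collapse one coordinate by normalization, whereas the paper keeps the double sum and bounds both one-dimensional tail sums $H^p_{s-1}H^q_s+H^p_sH^q_{s-1}$ directly.
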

	
	\begin{proof}
	For any \( \mathscr{z} = x + i y \in X \), there exist \( p,q \in \{0, 1, \ldots, n\} \) such that
	\( x \in \left[ \frac{p}{n}, \frac{p+1}{n} \right] \) and \( y \in \left[ \frac{q}{n}, \frac{q+1}{n} \right] \).
	Using (\ref{kant}) and Lemma \ref{lem 4.1}, we obtain 
	\begin{align*}
		K_{n,s}(\psi, \mathscr{z})
		&= n^2 \displaystyle\sum_{k=-n}^{n}\sum_{m=-n}^{n} 	L_{k,m,n}(\mathscr{z})
		\int_{\frac{k}{n}}^{\frac{k+1}{n}} \int_{\frac{m}{n}}^{\frac{m+1}{n}}
		\sqrt{(u-x)^2 + (v-y)^2}\, dt\,ds \\
		&\leq n^2 \displaystyle\sum_{k=-n}^{n}\sum_{m=-n}^{n} 	L_{k,m,n}(\mathscr{z})
		\frac{\sqrt{(|k-p|+1)^2 + (|m-q|+1)^2}}{n^3} \\
		&\leq \frac{1}{n} \displaystyle\sum_{k=-n}^{n}\sum_{m=-n}^{n} 	\left(|k-p| +1 \right)^{-s}\left(|m-q| +1 \right)^{-s}
		\sqrt{(|k-p|+1)^2 + (|m-q|+1)^2} \\
		&\leq \frac{1}{n} \displaystyle\sum_{k=-n}^{n}\sum_{m=-n}^{n} 	\left(|k-p| +1 \right)^{-s}\left(|m-q| +1 \right)^{-s}
		\left(	(|k-p|+1) + (|m-q|+1) \right) \\
		&\leq \frac{1}{n} \sum_{k=-n}^n 	\left(|k-p| +1 \right)^{-s+1}\sum_{m=-n}^n\left(|m-q| +1 \right)^{-s}\\
		&\hspace{4cm} +  \frac{1}{n}\sum_{k=-n}^n	\left(|k-p| +1 \right)^{-s} \sum_{m=-n}^n\left(|m-q| +1 \right)^{-s+1}\\
		&:= \frac{1}{n} \left\lbrace H^p_{s-1} H^q_{s}+  H^p_{s} H^q_{s-1}\right\rbrace. 
	\end{align*}
	To simplify, we consider  $H_s:=max\left\lbrace  H^p_{s}, H^q_{s}\right\rbrace $.
	Hence
	$$K_{n,s}(\psi, \mathscr{z}) \leq \frac{2}{n}H_{s-1}H_{s}.$$
	Therefore, we see that
	\[
	K_{n,s}(\psi, \mathscr{z}) =
	\begin{cases}
			\mathcal{O}\left( \frac{1}{n} \right), & \text{if } s > 2 \\[1.5ex]
			\mathcal{O}\left( \frac{\log n}{n} \right), & \text{if } s = 2
	\end{cases}
	\]
	which completes the proof.
	
	\end{proof}
	
		The following theorem addresses the convergence of (\ref{kant}) in \( C(X) \).
	\begin{thm}\label{thm 4.3}
		For every $f \in C(X)$ and $s \geq 2$, we have
		\[
		\lim_{n \to \infty} \left\|  K_{n,s}f - f \right\|_{\infty} = 0.
		\]
	\end{thm}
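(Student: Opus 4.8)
The plan is to run a Korovkin-type argument, exploiting that $K_{n,s}$ is a positive linear operator which reproduces constants, together with the quantitative estimate on $\psi$ furnished by Lemma \ref{lem 4.2}. First I would record the normalization $K_{n,s}(1,\mathscr{z}) = 1$, which follows at once from \eqref{kant} and \eqref{lk}: since $n^2 \int_{k/n}^{(k+1)/n}\int_{m/n}^{(m+1)/n} dv\,du = 1$ for each pair $(k,m)$, and the weights $L_{k,m,n}(\mathscr{z})$ sum to one by the very definition \eqref{lk}, the operator leaves constants fixed. This lets me write, for any fixed $\mathscr{z} = x+iy$,
\[
K_{n,s}(f,\mathscr{z}) - f(\mathscr{z}) = n^2 \sum_{k=-n}^{n}\sum_{m=-n}^{n} L_{k,m,n}(\mathscr{z}) \int_{k/n}^{(k+1)/n}\int_{m/n}^{(m+1)/n} \bigl[f(u,v) - f(\mathscr{z})\bigr]\, dv\,du.
\]

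Next I would bound the inner difference by the modulus of continuity. Writing $\mathscr{w} = u+iv$ and using $|f(\mathscr{w}) - f(\mathscr{z})| \le \omega(f, |\mathscr{w}-\mathscr{z}|_2)$ together with the quasi-subadditivity \eqref{omega}, namely $\omega(f, |\mathscr{w}-\mathscr{z}|_2) \le \omega(f,\delta)\bigl(1 + |\mathscr{w}-\mathscr{z}|_2/\delta\bigr)$ for any $\delta>0$, and recalling $\psi(\mathscr{z},\mathscr{w}) = |\mathscr{w}-\mathscr{z}|_2$ from \eqref{psi}, the positivity of the weights yields
\[
\bigl| K_{n,s}(f,\mathscr{z}) - f(\mathscr{z}) \bigr| \le \omega(f,\delta)\left[ K_{n,s}(1,\mathscr{z}) + \frac{1}{\delta}\, K_{n,s}(\psi,\mathscr{z}) \right] = \omega(f,\delta)\left[ 1 + \frac{1}{\delta}\, K_{n,s}(\psi,\mathscr{z}) \right].
\]
I would then invoke Lemma \ref{lem 4.2}, which supplies $K_{n,s}(\psi,\mathscr{z}) \le \eta_n$ with $\eta_n = \mathcal{O}(1/n)$ for $s>2$ and $\eta_n = \mathcal{O}(\log n / n)$ for $s = 2$. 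Choosing $\delta = \delta_n := \eta_n$ gives $\bigl| K_{n,s}(f,\mathscr{z}) - f(\mathscr{z}) \bigr| \le 2\,\omega(f,\delta_n)$ for every $\mathscr{z}\in X$, whence $\|K_{n,s}f - f\|_\infty \le 2\,\omega(f,\delta_n)$. Since $\delta_n \to 0$ and $f$ is uniformly continuous on the compact set $X$ (so that $\omega(f,\delta_n)\to 0$), the right-hand side vanishes as $n\to\infty$, which is the claim.

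The step I expect to require the most care is confirming that the estimate of Lemma \ref{lem 4.2} is genuinely \emph{uniform} in $\mathscr{z}$ rather than merely pointwise, since the Korovkin mechanism collapses to the uniform conclusion only when $\sup_{\mathscr{z}\in X} K_{n,s}(\psi,\mathscr{z}) \to 0$. This uniformity should descend from the fact that the governing sums $H_s$ are tails of the convergent $p$-series $\sum_{j}(|j|+1)^{-s}$ (for $s>1$), whose bounds are independent of the index location $p,q$ determined by $\mathscr{z}$; I would make this independence explicit. A secondary point to verify is that the normalizing identity $K_{n,s}(1,\cdot)=1$ holds verbatim over the index range $\{-n,\dots,n\}$ appearing in \eqref{kant}, and that the denominator in \eqref{lk} is strictly positive for every $\mathscr{z}\in X$ so that the weights $L_{k,m,n}$ are well defined throughout the argument.
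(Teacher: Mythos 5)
Your proposal is correct and follows essentially the same route as the paper: both arguments rest on the positivity and normalization $K_{n,s}(1,\mathscr{z})=1$ of the weights $L_{k,m,n}$, reduce the error to the first-moment quantity $K_{n,s}(\psi,\mathscr{z})$, and conclude via Lemma \ref{lem 4.2} together with the uniform continuity of $f$ on the compact set $X$. The only (cosmetic) difference is that the paper encodes uniform continuity through the splitting $|f(\mathscr{w})-f(\mathscr{z})|\le \epsilon + \frac{2\|f\|_\infty}{\delta}|\mathscr{w}-\mathscr{z}|_2$, whereas you use the modulus-of-continuity inequality \eqref{omega} with $\delta=\delta_n$, which in fact yields the slightly sharper quantitative bound $\|K_{n,s}f-f\|_\infty\le 2\,\omega(f,\delta_n)$.
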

	
	\begin{proof}
	 Let $f \in C(X)$. Using the uniform continuity of $f$ and (\ref{kant}), we deduce that
			\begin{flalign*}
				| K_{n,s}f- f| &\leq n^2 \displaystyle\sum_{k=-n}^{n}\sum_{m=-n}^{n}	L_{k,m,n}(\mathscr{z})
			 \int_{\frac{k}{n}}^{\frac{k+1}{n}} \int_{\frac{m}{n}}^{\frac{m+1}{n}} |f(u,v) - f(x,y)| \, dv \, du \\
				& \leq n^2 \displaystyle\sum_{k=-n}^{n}\sum_{m=-n}^{n} 	L_{k,m,n}(\mathscr{z})
				  \int_{\frac{k}{n}}^{\frac{k+1}{n}} \int_{\frac{m}{n}}^{\frac{m+1}{n}} \left( \epsilon + \frac{2N}{\delta} \sqrt{(u-x)^2 + (v-y)^2} \right) dv du \\
				& = \epsilon + \frac{2N}{\delta} K_{n,s}(\psi, \mathscr{z}),
			\end{flalign*}
			where $N := \|f\|_{\infty}$. This completes the proof by applying Lemma \ref{lem 4.2}.
		\end{proof}

	Since the convergence of $(K_{n,s})$ on $C(X)$ implies convergence in $L^p(X)$, the subsequent result is an immediate consequence of Theorem \ref{thm 4.3}.
	
	\begin{thm}\label{thm 4.4}
		 Let $f \in C(X)$ and $s \geq 2$. Then we have
		\[
		\lim_{n \to \infty} \left\|  K_{n,s}f - f \right\|_p = 0.
		\]
	\end{thm}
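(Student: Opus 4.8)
The plan is to deduce Theorem \ref{thm 4.4} from the already-established uniform convergence in Theorem \ref{thm 4.3} by exploiting the fact that $X = [-1,1]\times[-1,1]$ is a set of finite Lebesgue measure. Since $f \in C(X)$ and $s \geq 2$, Theorem \ref{thm 4.3} gives $\|K_{n,s}f - f\|_\infty \to 0$ as $n \to \infty$. The key observation is that on a domain of finite measure, uniform convergence dominates $L^p$-convergence. First I would record that $|X| = 4 < \infty$, so that for any bounded measurable $g : X \to \CC$ we have the elementary inequality $\|g\|_p \leq |X|^{1/p}\,\|g\|_\infty = 4^{1/p}\,\|g\|_\infty$.

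Applying this inequality to $g = K_{n,s}f - f$ (which is bounded, being a finite positive-linear combination of values of the continuous, hence bounded, function $f$), I obtain
\[
\left\| K_{n,s}f - f \right\|_p \leq 4^{1/p}\,\left\| K_{n,s}f - f \right\|_\infty .
\]
Taking the limit $n \to \infty$ and invoking Theorem \ref{thm 4.3}, the right-hand side tends to zero, which forces $\left\| K_{n,s}f - f \right\|_p \to 0$. This completes the argument.

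I do not expect any genuine obstacle here, since the statement restricts the hypothesis to $f \in C(X)$ rather than the full space $L^p(X)$; the finite-measure embedding $C(X) \hookrightarrow L^p(X)$ makes the implication immediate. The only point requiring a moment of care is verifying the boundedness of $K_{n,s}f$ so that the $\|\cdot\|_\infty$ norm is finite: this is clear because the weights $L_{k,m,n}(\mathscr{z})$ are nonnegative and, by construction in (\ref{lk}), sum to one over $k,m$, so that $K_{n,s}f$ is an average of integral means of $f$ and hence satisfies $\|K_{n,s}f\|_\infty \leq \|f\|_\infty < \infty$. The more substantive task of obtaining genuine $L^p$-convergence for merely $p$-integrable (not continuous) $f$ would require a quantitative estimate via the Peetre $K$-functional of Lemma \ref{e4} together with Lemma \ref{lem 4.2}, but the present statement sidesteps this by assuming continuity, so the finite-measure comparison suffices.
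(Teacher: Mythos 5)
Your argument is correct and coincides with the paper's own reasoning: the paper also deduces Theorem \ref{thm 4.4} directly from the uniform convergence in Theorem \ref{thm 4.3}, noting that convergence in $C(X)$ implies convergence in $L^p(X)$ on the finite-measure domain $X$. Your version simply makes explicit the constant $|X|^{1/p}=4^{1/p}$ and the boundedness of $K_{n,s}f$, which the paper leaves implicit.
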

	
	In the following result we prove that (\ref{kant}) is bounded in $L^p(X).$
	\begin{lemma}
		\label{lem 4.5}
		For   $f \in L^p(X)$, where $1 \leq p < \infty$ and $s \geq 2$, there holds
	$$	\|K_{n,s}f\|_p\leq C\|f\|_p,$$
	for some $C>0.$
	\end{lemma}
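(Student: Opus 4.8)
The plan is to exploit the fact that the coefficients $L_{k,m,n}(\mathscr{z})$ are nonnegative and, by their very definition in (\ref{lk}), satisfy $\sum_{k=-n}^{n}\sum_{m=-n}^{n}L_{k,m,n}(\mathscr{z})=1$ for every $\mathscr{z}\in X$. Writing $Q_{k,m}:=[k/n,(k+1)/n]\times[m/n,(m+1)/n]$ and denoting by $A_{k,m}(f):=n^{2}\int_{Q_{k,m}}f$ the mean of $f$ over $Q_{k,m}$, the operator (\ref{kant}) is exactly the convex average $K_{n,s}(f,\mathscr{z})=\sum_{k,m}L_{k,m,n}(\mathscr{z})\,A_{k,m}(f)$. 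I would first apply Jensen's inequality to this convex combination to get $|K_{n,s}(f,\mathscr{z})|^{p}\le\sum_{k,m}L_{k,m,n}(\mathscr{z})\,|A_{k,m}(f)|^{p}$, and then apply Jensen once more on each square (whose normalized measure $n^{2}\,du\,dv$ has total mass one) to obtain $|A_{k,m}(f)|^{p}\le n^{2}\int_{Q_{k,m}}|f|^{p}$.

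Next I would integrate over $X$ and interchange summation and integration by Tonelli's theorem (all terms being nonnegative), which yields
\[
\|K_{n,s}f\|_{p}^{p}\le \sum_{k,m}\Big(\int_{X}L_{k,m,n}(\mathscr{z})\,d\mathscr{z}\Big)\,n^{2}\int_{Q_{k,m}}|f|^{p}.
\]
The crux is then to show $\int_{X}L_{k,m,n}(\mathscr{z})\,d\mathscr{z}\le C\,n^{-2}$ uniformly in $k,m$. To this end I would decompose $X$ into the cells determined by the index pair $(p,q)$ associated with $\mathscr{z}$ as in Lemma \ref{lem 4.1}, apply the pointwise estimate (\ref{4.2}) on each such cell, and use that each cell has measure comparable to $n^{-2}$. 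This reduces the integral to a double series, and since $s\ge 2>1$ the sums $\sum_{p}(|k-p|+1)^{-s}$ and $\sum_{q}(|m-q|+1)^{-s}$ are bounded by a constant $C_{s}$ independent of $k,m$, giving $\int_{X}L_{k,m,n}\,d\mathscr{z}\le D_{s}C_{s}^{2}\,n^{-2}=:C\,n^{-2}$.

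Finally, substituting this bound back, the factors $n^{-2}$ and $n^{2}$ cancel, leaving $\|K_{n,s}f\|_{p}^{p}\le C\sum_{k,m}\int_{Q_{k,m}}|f|^{p}\le C\|f\|_{p}^{p}$, since the squares $Q_{k,m}$ have pairwise disjoint interiors; taking $p$-th roots then gives the assertion with constant $C^{1/p}$. The step I expect to be the main obstacle is precisely the uniform estimate $\int_{X}L_{k,m,n}\,d\mathscr{z}=\mathcal{O}(n^{-2})$: the delicate point is that $L_{k,m,n}$ depends on $\mathscr{z}$ through the index $(p,q)$ of the cell containing it, so the pointwise decay in (\ref{4.2}) must be summed against the cell areas, and it is the summability of $(|k-p|+1)^{-s}$ for $s>1$ that renders the bound independent of both $k,m$ and $n$, thereby producing a constant $C$ that does not depend on $n$.
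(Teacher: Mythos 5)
Your proposal is correct and follows essentially the same route as the paper: two applications of Jensen's inequality (one over the convex weights $L_{k,m,n}$, one over the normalized cell measure $n^{2}\,du\,dv$), an interchange of sum and integral, and the key uniform bound $\int_{X}L_{k,m,n}(\mathscr{z})\,d\mathscr{z}=\mathcal{O}(n^{-2})$ obtained by decomposing $X$ into the cells indexed by $(p,q)$ and invoking the pointwise estimate of Lemma \ref{lem 4.1} together with the summability of $(|k-p|+1)^{-s}$. The step you flag as the main obstacle is exactly the paper's estimate (\ref{e2}), carried out in the same way.
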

	
	\begin{proof}
		In view of Jensen's inequality, we get
		\begin{flalign}\label{e1}
				\|K_{n,s}f\|^p_p &\leq \int_{-1}^{1}\int_{-1}^{1} \left| \displaystyle \sum_{k=-n}^{n}\sum_{m=-n}^{n} n^2\int_{k/n}^{(k+1)/n} \int_{m/n}^{(m+1)/n} f(u,v) L_{k,m,n}(\mathscr{z}) dvdu\right|^p dydx \nonumber\\
				&\leq \int_{-1}^{1}\int_{-1}^{1} \displaystyle \sum_{k=-n}^{n}\sum_{m=-n}^{n}  L_{k,m,n}(\mathscr{z}) \left|n^2 \int_{k/n}^{(k+1)/n} \int_{m/n}^{(m+1)/n} f(u,v)dvdu \right|^p dydx\nonumber\\
					&\leq  n^2 \displaystyle \sum_{k=-n}^{n}\sum_{m=-n}^{n}   \int_{k/n}^{(k+1)/n} \int_{m/n}^{(m+1)/n}  \left|f(u,v)\right|^p dvdu \int_{-1}^{1}\int_{-1}^{1} L_{k,m,n}(\mathscr{z})dydx.
		\end{flalign}
	From Lemma \ref{lem 4.1}, one can observe that
		\begin{flalign}\label{e2}
		\int_{-1}^{1}\int_{-1}^{1}   L_{k,m,n}(\mathscr{z})dydx &=	\displaystyle \sum_{p=-n}^{n}\sum_{q=-n}^{n} \int_{p/n}^{(p+1)/n} \int_{q/n}^{(q+1)/n}L_{k,m,n}(x,y)dydx \nonumber\\
		&\leq   D_s 	\displaystyle \sum_{p=-n}^{n}\sum_{q=-n}^{n}  \int_{p/n}^{(p+1)/n} \int_{q/n}^{(q+1)/n}  \left(|k-p| +1 \right)^{-s}\left(|m-q| +1 \right)^{-s} dydx\nonumber \\
			&\leq    \frac{D_s }{n^2}	\displaystyle \sum_{p=-n}^{n}\sum_{q=-n}^{n}\left(|k-p| +1 \right)^{-s}\left(|m-q| +1 \right)^{-s}\nonumber\\
			&= 		\mathcal{O}\left( \frac{1}{n^2} \right) ,
		\end{flalign}
	holds whenever $s \geq 2.$	Hence from (\ref{e1}) and (\ref{e2}), we get
		\begin{flalign*}
				\|K_{n,s}f\|_p
				& \leq C\|f\|_p.
			\end{flalign*}
		This completes the proof.
	\end{proof}

	In the following theorem we establish the  convergence of (\ref{kant}) in $L^p(X)$. 
		
		\begin{thm}\label{thm 4.6}
			 Let $f \in L^p(X)$  and $s \geq 2$. Then we have
		\[
		\lim_{n \to \infty} \left\|  K_{n,s}f - f \right\|_p = 0.\]
		\end{thm}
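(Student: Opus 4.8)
The plan is to combine the uniform boundedness of the operators established in Lemma \ref{lem 4.5} with the convergence on continuous functions from Theorem \ref{thm 4.4}, by means of a standard density argument. The crucial point is that the constant $C$ appearing in Lemma \ref{lem 4.5} is independent of $n$, so that the family $\{K_{n,s}\}_n$ is uniformly bounded as a family of operators on $L^p(X)$. Together with the density of $C(X)$ in $L^p(X)$ for $1 \leq p < \infty$, this uniform boundedness lets convergence on the dense subset propagate to all of $L^p(X)$.

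First I would fix $f \in L^p(X)$ and an arbitrary $\epsilon > 0$. Exploiting the density of $C(X)$ in $L^p(X)$, I would select $g \in C(X)$ with $\|f - g\|_p < \epsilon$, and then decompose the error through the triangle inequality as
\begin{flalign*}
\|K_{n,s}f - f\|_p \leq \|K_{n,s}(f-g)\|_p + \|K_{n,s}g - g\|_p + \|g - f\|_p.
\end{flalign*}

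Next I would control each term separately. The first term is handled by Lemma \ref{lem 4.5}, which yields $\|K_{n,s}(f-g)\|_p \leq C\|f-g\|_p < C\epsilon$; the third term is already bounded by $\epsilon$ from the choice of $g$. For the middle term, Theorem \ref{thm 4.4} applied to the fixed function $g \in C(X)$ guarantees $\|K_{n,s}g - g\|_p \to 0$ as $n \to \infty$, so there is an index $n_0$ with $\|K_{n,s}g - g\|_p < \epsilon$ for all $n \geq n_0$. Collecting the three estimates gives $\|K_{n,s}f - f\|_p \leq (C + 2)\epsilon$ for every $n \geq n_0$, and since $\epsilon > 0$ was arbitrary the claimed limit follows.

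Since all three ingredients—uniform boundedness, convergence on the dense subset $C(X)$, and density of $C(X)$ in $L^p(X)$—are already established, there is no genuine obstacle here. The only step meriting care is confirming that the constant in Lemma \ref{lem 4.5} does not depend on $n$, as it is precisely this uniformity that validates the $\epsilon$-argument above.
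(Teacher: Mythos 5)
Your proof is correct and follows essentially the same route as the paper's own argument: density of $C(X)$ in $L^p(X)$, the triangle-inequality decomposition of $\|K_{n,s}f - f\|_p$, the uniform bound from Lemma \ref{lem 4.5} applied to $f-g$, and Theorem \ref{thm 4.4} applied to the continuous approximant $g$. The only difference is the cosmetic choice of the $\epsilon$-scaling in selecting $g$.
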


	\begin{proof}
			The proof is established by utilizing density argument. Assume that $f\in L^{p}(X)$ and $\epsilon >0$. Since $ C(X)$ is dense in $L^{p}(X)$ (\cite{rudin}), there exists $g \in C(X)$  such that
		 $\|f - g \|_p < \epsilon/2(C+1)$. 
		 Now using triangle inequality, we get
		\[
		\|  K_{n,s}f - f \|_p \leq \|  K_{n,s}f- K_{n,s}g \|_p + \|  K_{n,s}g- g \|_p + \| f - g \|_p.
		\]
		Hence, by Lemma \ref{lem 4.5} and Theorem \ref{thm 4.4}, we obtain
		\begin{flalign}\label{eq}
		\|  K_{n,s}f - f \|_p &\leq (C +1)\| f - g \|_p + \|  K_{n,s}g - g \|_p\\
		& \leq \epsilon/2 +\epsilon/2= \epsilon \nonumber.
		\end{flalign}
		 This proves the desired result.
	\end{proof}

	In the following result, we employ the well-known \emph{Hardy–Littlewood maximal function} \cite{har}, defined as
		\[
	M(f,\mathscr{z}) = \sup_{r>0} \frac{1}{|B(\mathscr{z},r)|} \int_{B(\mathscr{z},r)} |f(w)| \, dA(w),
	\]
	for locally integrable function \( f : X \to \mathbb{C} \). The celebrated theorem of Hardy, Littlewood and Wiener asserts that \( \left(	M(f,\mathscr{z})  \right)\) is bounded on \( L^p(X) \) for \( 1 < p \leq +\infty \), i.e.,
	\begin{equation}\label{max}
		\|Mf\|_{p} \le C_p \|f\|_{p}, 
	\end{equation}
	where \( C_p \) is constant depending only on \( p \).

	\begin{thm}
		Let   $f\in L^p(X), \, p> 1$. Then we have
		\[
		\|  K_{n,s}f-f \|_{p} \leq C_{p,s} \hspace{1pt}\omega(f, \varepsilon_n)_p,
		\]
		 where
		\begin{equation}\label{eps}
		\varepsilon_n =
		\begin{cases}
			n^{-1}, & \text{if } s > 2, \\
			n^{-1} \log n, & \text{if } s = 2.
		\end{cases}
		\end{equation}
		\end{thm}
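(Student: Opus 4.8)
The plan is to run the standard Peetre $K$-functional argument. By Lemma \ref{e4} the modulus $\omega(f,\varepsilon_n)_p$ and the $K$-functional $K(f,\varepsilon_n)_p$ are equivalent, so it suffices to prove the estimate $\|K_{n,s}f - f\|_p \leq C\,K(f,\varepsilon_n)_p$. First I would fix a function $h$ with $D^\alpha h \in C(X)$ that is nearly optimal in the defining infimum, i.e. $\|f-h\|_p + \varepsilon_n \sup_{|\alpha|=1}\|D^\alpha h\|_p \leq 2K(f,\varepsilon_n)_p$, and split by the triangle inequality
\[
\|K_{n,s}f - f\|_p \leq \|K_{n,s}(f-h)\|_p + \|K_{n,s}h - h\|_p + \|f - h\|_p .
\]
By the $L^p$-boundedness of Lemma \ref{lem 4.5}, the first term is at most $C\|f-h\|_p$, so the first and third terms together contribute at most $(C+1)\|f-h\|_p$, which is already of the required size.

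The core of the argument is the middle term $\|K_{n,s}h - h\|_p$, where the smoothness of $h$ must be used. Since $\sum_{k,m}L_{k,m,n}(\mathscr{z})=1$ and $n^2\int_{k/n}^{(k+1)/n}\int_{m/n}^{(m+1)/n} dv\,du = 1$, formula (\ref{kant}) lets me write $K_{n,s}h(\mathscr{z})-h(\mathscr{z})$ as a kernel average of the increments $h(\mathscr{w})-h(\mathscr{z})$ with $\mathscr{w}=u+iv$. Applying the fundamental theorem of calculus along the segment from $\mathscr{z}$ to $\mathscr{w}$,
\[
h(\mathscr{w}) - h(\mathscr{z}) = \int_0^1 \nabla h\big(\mathscr{z} + t(\mathscr{w}-\mathscr{z})\big)\cdot(\mathscr{w}-\mathscr{z})\,dt ,
\]
where $\nabla h=(D^{(1,0)}h, D^{(0,1)}h)$ is the gradient in the real coordinates $(x,y)$. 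Unlike the uniform case of Theorem \ref{thm 4.3}, in $L^p$ one cannot pull out $\|\nabla h\|_\infty$; instead I would invoke the standard pointwise bound
\[
|h(\mathscr{w})-h(\mathscr{z})| \leq C\,|\mathscr{w}-\mathscr{z}|_2\big(M(|\nabla h|)(\mathscr{z}) + M(|\nabla h|)(\mathscr{w})\big),
\]
which trades the gradient for the Hardy--Littlewood maximal function $M$ from (\ref{max}). Substituting and recognizing that $n^2\sum_{k,m}L_{k,m,n}(\mathscr{z})\iint |\mathscr{w}-\mathscr{z}|_2\,dv\,du = K_{n,s}(\psi,\mathscr{z})$, Lemma \ref{lem 4.2} supplies the decisive factor $\varepsilon_n$ (exactly the dichotomy of (\ref{eps})): the part carrying $M(|\nabla h|)(\mathscr{z})$ is bounded pointwise by $C\varepsilon_n\,M(|\nabla h|)(\mathscr{z})$.

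Taking the $L^p$-norm then reduces to estimating $\|M(|\nabla h|)\|_p$, and the Hardy--Littlewood--Wiener inequality (\ref{max}) — valid precisely because $p>1$ — gives $\|M(|\nabla h|)\|_p \leq C_p \,\||\nabla h|\|_p \leq C_p' \sup_{|\alpha|=1}\|D^\alpha h\|_p$. The remaining piece, involving $M(|\nabla h|)$ evaluated at the integration variable $\mathscr{w}$, is handled by the same mass-preserving kernel-averaging mechanism that underlies Lemma \ref{lem 4.5}, again producing $\sup_{|\alpha|=1}\|D^\alpha h\|_p$ against the small factor $\varepsilon_n$. Altogether $\|K_{n,s}h-h\|_p \leq C\varepsilon_n \sup_{|\alpha|=1}\|D^\alpha h\|_p$. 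Combining the three pieces yields $\|K_{n,s}f-f\|_p \leq C\big(\|f-h\|_p + \varepsilon_n\sup_{|\alpha|=1}\|D^\alpha h\|_p\big) \leq 2C\,K(f,\varepsilon_n)_p$, and Lemma \ref{e4} converts this into $C_{p,s}\,\omega(f,\varepsilon_n)_p$. The hard part will be exactly this $L^p$ control of the smooth term: arranging the maximal-function substitution so that Lemma \ref{lem 4.2} delivers the factor $\varepsilon_n$ while (\ref{max}) turns the pointwise gradient into its $L^p$-norm, with special care for the maximal-function contribution evaluated at the integration variable $\mathscr{w}$.
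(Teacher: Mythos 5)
Your proposal is correct and follows essentially the same route as the paper: decompose via a near-optimal smooth approximant from the Peetre $K$-functional, control the rough part with the $L^p$-boundedness of Lemma \ref{lem 4.5}, bound the smooth part pointwise by the Hardy--Littlewood maximal function times $K_{n,s}(\psi,\mathscr{z})$ so that Lemma \ref{lem 4.2} delivers $\varepsilon_n$ and (\ref{max}) (using $p>1$) gives the $L^p$ bound, then convert back with Lemma \ref{e4}. Your use of the symmetric two-point maximal inequality $|h(\mathscr{w})-h(\mathscr{z})|\leq C|\mathscr{w}-\mathscr{z}|_2\left(M(|\nabla h|)(\mathscr{z})+M(|\nabla h|)(\mathscr{w})\right)$ is in fact slightly more careful than the paper, which retains only the maximal function evaluated at $\mathscr{z}$.
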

		\begin{proof}
			Consider $g\in AC(X)$ and $g'\in L^p(X)$. Using (\ref{eq}), we have
		\begin{flalign}\label{4.8}
			\|K_{n,s}f-f\|_{p} 
			&\le (C+1)\|f-g\|_{p} + \|K_{n,s}g-g\|_{p}. 
		\end{flalign}
			Utilizing (\ref{kant}),  we obtain
			\begin{flalign*}
			|K_{n,s}g-g| &\le n^2\sum_{k=-n}^n \sum_{m=-n}^n \int_{k/n}^{(k+1)/n}\int_{m/n}^{(m+1)/n} |g(t_1,t_2)-g(x_1,x_2)|\,L_{k,m,n}(\mathscr{z})\,dt_1dt_2\\
		& \le n^2 M(D^{\alpha} g,\mathscr{z}) \sum_{k=-n}^n \sum_{m=-n}^n L_{k,m,n}(\mathscr{z}) \int_{k/n}^{(k+1)/n}\int_{m/n}^{(m+1)/n} \sqrt{(t_1-x_1)^2+(t_2-x_2)^2} \ dt_1dt_2\\
			&\le  M(D^{\alpha} g,\mathscr{z}) K_{n,s}(\psi,\mathscr{z}).
			\end{flalign*}
			In view of  Lemma \ref{lem 4.2} and (\ref{max}), we can write
			\[
			\|K_{n,s}g-g\|_{p} \le C_p\varepsilon_n \|D^{\alpha} g\|_{p}, \quad C_p>0,
			\]
			where $\varepsilon_n$ is as given in (\ref{eps}). Now using (\ref{o1}) and (\ref{4.8}), we obtain
			\begin{flalign*}
			\|K_{n,s}f-f\|_{p} &\le  (C+1)\|f-g\|_{p} + C_p\varepsilon_n \|D^{\alpha} g\|_{p}\\
			& \leq  C_{p,s} \hspace{1pt}\omega(f, \varepsilon_n)_p,
			\end{flalign*}
			where $C_{p,s}=max \left\lbrace C+1,C_p \right\rbrace $. This proved the desired result.
			\end{proof}

		\section{Approximation by complex Hermite type Nevai Operators}\label{5}
		For any complex-valued  $r$-times differentiable function $f : X \rightarrow \CC$, the complex  Hermite type Nevai operator  for $r,n \in \NN$ is defined as 
		\begin{flalign}\label{herm}
		H^{(r)}_{n,s}(f, \mathscr{z}) 
		&= \frac{\displaystyle \sum_{k=1}^{n}\sum_{m=1}^{n} \lambda_{k,m} \left| K_n(\mathscr{z}, \mathscr{z}_{k,m}) \right|^s \left( \sum_{j=0}^r \frac{f^{(j)}(\mathscr{z}_{k,m})}{j!} (\mathscr{z} - \mathscr{z}_{k,m})^j \right)}{ \displaystyle\sum_{k=1}^{n}\sum_{m=1}^{n} \lambda_{k,m} \left| K_n(\mathscr{z}, \mathscr{z}_{k,m}) \right|^s  }, \quad s \in [0,\infty).
	\end{flalign}

In order to prove the convergence result for operator (\ref{herm}), we first establish the following result.

		\begin{lemma}\label{lem 5.1}
			Let \( r \in \mathbb{N} \). Then, for every \( f \in C^r(X) \), the following inequality holds:
			\[
				\left| 	f(\mathscr{z}) - \sum_{j=0}^{r} \frac{f^{(j)}(\mathscr{z}_{k,m})}{j!} (\mathscr{z} - \mathscr{z}_{k,m})^j  \right|  \leq   \frac{(\mathscr{z} - \mathscr{z}_{k,m})^r}{r!}   \omega\big( f^{(r)}, |\mathscr{z}-\mathscr{z}_{k,m}|_2 \big) .
			\]
		\end{lemma}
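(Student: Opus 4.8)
The plan is to establish the bound through Taylor's theorem with the integral form of the remainder, reducing the estimate to a one-variable situation along the straight segment joining $\mathscr{z}_{k,m}$ to $\mathscr{z}$. I would introduce the auxiliary function $g(\tau)=f\big(\mathscr{z}_{k,m}+\tau(\mathscr{z}-\mathscr{z}_{k,m})\big)$ for $\tau\in[0,1]$, which is $r$-times differentiable with $g^{(j)}(0)=f^{(j)}(\mathscr{z}_{k,m})(\mathscr{z}-\mathscr{z}_{k,m})^j$ by the chain rule. With this substitution, the Taylor polynomial appearing in the statement is precisely the degree-$r$ Taylor polynomial of $g$ evaluated at $\tau=1$, namely $\sum_{j=0}^r \frac{g^{(j)}(0)}{j!}$, so that the left-hand side of the claimed inequality equals $\big|g(1)-\sum_{j=0}^r \frac{g^{(j)}(0)}{j!}\big|$.

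Next I would record the integral remainder in the form that isolates the increment of the top derivative. Starting from the standard degree-$(r-1)$ remainder $g(1)-\sum_{j=0}^{r-1}\frac{g^{(j)}(0)}{j!}=\frac{1}{(r-1)!}\int_0^1(1-\tau)^{r-1}g^{(r)}(\tau)\,d\tau$ and subtracting $\frac{g^{(r)}(0)}{r!}=\frac{1}{(r-1)!}\int_0^1(1-\tau)^{r-1}g^{(r)}(0)\,d\tau$ (using $\int_0^1(1-\tau)^{r-1}\,d\tau=1/r$), I obtain
\[
g(1)-\sum_{j=0}^r \frac{g^{(j)}(0)}{j!}=\frac{1}{(r-1)!}\int_0^1(1-\tau)^{r-1}\big[g^{(r)}(\tau)-g^{(r)}(0)\big]\,d\tau.
\]
Translating back to $f$ gives $g^{(r)}(\tau)-g^{(r)}(0)=\big[f^{(r)}\big(\mathscr{z}_{k,m}+\tau(\mathscr{z}-\mathscr{z}_{k,m})\big)-f^{(r)}(\mathscr{z}_{k,m})\big](\mathscr{z}-\mathscr{z}_{k,m})^r$, so the factor $(\mathscr{z}-\mathscr{z}_{k,m})^r$ pulls out of the integral as a common term.

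I would then take absolute values. Since the point $\mathscr{z}_{k,m}+\tau(\mathscr{z}-\mathscr{z}_{k,m})$ lies at distance $\tau|\mathscr{z}-\mathscr{z}_{k,m}|_2\le|\mathscr{z}-\mathscr{z}_{k,m}|_2$ from $\mathscr{z}_{k,m}$, the definition of the modulus of continuity together with its monotonicity yields $\big|f^{(r)}(\cdots)-f^{(r)}(\mathscr{z}_{k,m})\big|\le \omega\big(f^{(r)},|\mathscr{z}-\mathscr{z}_{k,m}|_2\big)$ uniformly in $\tau$. The remaining elementary integral $\int_0^1(1-\tau)^{r-1}\,d\tau=1/r$ combines with the prefactor $1/(r-1)!$ to give $1/r!$, which is exactly the claimed estimate.

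The only genuinely delicate point is justifying the directional reduction to $g$, i.e.\ verifying that the chain-rule identity $g^{(j)}(0)=f^{(j)}(\mathscr{z}_{k,m})(\mathscr{z}-\mathscr{z}_{k,m})^j$ holds in the sense in which the derivatives $f^{(j)}$ are understood in the definition \eqref{herm}; once this is in place, the integral remainder and the modulus-of-continuity bound are entirely routine and require no hypothesis beyond $f\in C^r(X)$.
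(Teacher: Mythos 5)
Your proposal is correct and follows essentially the same route as the paper: Taylor's theorem with the integral form of the remainder along the segment from $\mathscr{z}_{k,m}$ to $\mathscr{z}$, splitting off the $\frac{f^{(r)}(\mathscr{z}_{k,m})}{r!}(\mathscr{z}-\mathscr{z}_{k,m})^r$ term via $\int_0^1(1-t)^{r-1}\,dt=1/r$, and bounding the resulting increment of $f^{(r)}$ by the modulus of continuity. The only cosmetic difference is that you make the directional reduction explicit through the auxiliary function $g(\tau)$, which the paper leaves implicit in its appeal to a ``complex version of Taylor's theorem.''
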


		\begin{proof} 
			Since \( f \in C^r(X) \), from the complex version of Taylor's theorem, we deduce that
		\begin{equation}\label{eq:taylor}
			f(\mathscr{z}) = \sum_{j=0}^{r-1} \frac{f^{(j)}(\mathscr{z}_{k,m})}{j!} (\mathscr{z} - \mathscr{z}_{k,m})^j  + R_{r}(\mathscr{z}),
		\end{equation}
		where the remainder \( R_{r} \) is given  by
		\begin{equation}\label{eq:remainder}
			R_{r}(\mathscr{z}) =  \frac{ (\mathscr{z} - \mathscr{z}_{k,m})^r}{(r-1)!} \int_0^1 (1 - t)^{r-1} f^{(r)}(\mathscr{z}_{k,m}+t(\mathscr{z}-\mathscr{z}_{k,m})) dt.
		\end{equation}
	Using (\ref{eq:remainder}), we can write
		\begin{equation}\label{rr}
			R_{r}(\mathscr{z}) = \frac{ (\mathscr{z} - \mathscr{z}_{k,m})^r}{r!}  f^{(r)}(\mathscr{z}_{k,m}) +  \frac{ (\mathscr{z} - \mathscr{z}_{k,m})^r}{(r-1)!} \int_0^1 (1 - t)^{r-1} f^{(r)}(\mathscr{z}_{k,m}+t(\mathscr{z}-\mathscr{z}_{k,m})-f^{(r)}(\mathscr{z}_{k,m})) dt.
		\end{equation}
	In view of \eqref{eq:taylor}-\eqref{rr}, we obtain
		\begin{flalign*}
			\left| 	f(\mathscr{z}) - \sum_{j=0}^{r} \frac{f^{(j)}(\mathscr{z}_{k,m})}{j!} (\mathscr{z} - \mathscr{z}_{k,m})^j \right| \leq   \frac{(\mathscr{z} - \mathscr{z}_{k,m})^r}{(r-1)!} \int_0^1 (1 - t)^{r-1} 
			\left| f^{(r)}(\mathscr{z}_{k,m}+t(\mathscr{z}-\mathscr{z}_{k,m}))-f^{(r)}(\mathscr{z}_{k,m}) \right|dt.
		\end{flalign*}
		By using the fundamental propertie of the modulus of continuity, we obtain 
		\begin{align*}
			\left| 	f(\mathscr{z}) - \sum_{j=0}^{r} \frac{f^{(j)}(\mathscr{z}_{k,m})}{j!} (\mathscr{z} - \mathscr{z}_{k,m})^j  \right| &\leq   \frac{(\mathscr{z} - \mathscr{z}_{k,m})^r}{(r-1)!}  \int_0^1 (1 - t)^{r-1} \omega\big( f^{(r)}, |\mathscr{z}-\mathscr{z}_{k,m}|_2 \big) dt .
		\end{align*}
		This gives
		$$	\left| 	f(\mathscr{z}) - \sum_{j=0}^{r} \frac{f^{(j)}(\mathscr{z}_{k,m})}{j!} (\mathscr{z} - \mathscr{z}_{k,m})^j  \right| \leq   \frac{(\mathscr{z} - \mathscr{z}_{k,m})^r}{r!}   \omega\big( f^{(r)}, |\mathscr{z}-\mathscr{z}_{k,m}|_2 \big) .$$
	\end{proof}

	\begin{thm}\label{thm 5.2}
			Let $f \in C^r(X)$, $1 < s \leq 2$.
		Then we have
		\begin{equation*}
			\left| H^{(r)}_{n,s}(f,\mathscr{z}) - f(\mathscr{z}) \right|
		 \leq 
		c'\omega\left( f^{(r)},  \displaystyle \frac{\left(B_2|T_n(x)|^s+ B_1|T_n(y)|^s \right)}{\lambda_n^{1-s}(\mathscr{z})\left[ 1 + \ln n \right]}\right), 
		\end{equation*}
		where  $c'=\left( \frac{(\mathscr{z} - \mathscr{z}_{k,m})^r}{r!} +1\right) $.
	\end{thm}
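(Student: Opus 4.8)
The plan is to mirror the proof of Theorem \ref{thm 3.8}, exploiting the fact that $H^{(r)}_{n,s}$ is built from exactly the same positive weights as $N_{n,s}$. Writing
\[
w_{k,m}(\mathscr{z}) := \frac{\lambda_{k,m}|K_n(\mathscr{z},\mathscr{z}_{k,m})|^s}{\sum_{k=1}^n\sum_{m=1}^n \lambda_{k,m}|K_n(\mathscr{z},\mathscr{z}_{k,m})|^s},
\]
these weights are nonnegative and sum to $1$, so the operator reproduces constants and in particular $f(\mathscr{z}) = \sum_{k,m} w_{k,m}(\mathscr{z})\,f(\mathscr{z})$. Subtracting this inside the definition (\ref{herm}) yields
\[
\left| H^{(r)}_{n,s}(f,\mathscr{z}) - f(\mathscr{z}) \right| \leq \sum_{k=1}^n\sum_{m=1}^n w_{k,m}(\mathscr{z}) \left| f(\mathscr{z}) - \sum_{j=0}^r \frac{f^{(j)}(\mathscr{z}_{k,m})}{j!}(\mathscr{z}-\mathscr{z}_{k,m})^j \right|,
\]
which is the entry point for applying Lemma \ref{lem 5.1} termwise.

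First I would invoke Lemma \ref{lem 5.1} to replace each Taylor remainder by $\frac{(\mathscr{z}-\mathscr{z}_{k,m})^r}{r!}\,\omega\big(f^{(r)}, |\mathscr{z}-\mathscr{z}_{k,m}|_2\big)$. Next, using the scaling property (\ref{ome}) with $\lambda = |\mathscr{z}-\mathscr{z}_{k,m}|_2/\delta$, each modulus factor linearizes as $\omega(f^{(r)}, |\mathscr{z}-\mathscr{z}_{k,m}|_2) \leq (1 + |\mathscr{z}-\mathscr{z}_{k,m}|_2/\delta)\,\omega(f^{(r)},\delta)$. After pulling the bounded prefactor out (see the obstacle below), this reduces the estimate to precisely the weighted sum $\sum_{k,m} w_{k,m}(\mathscr{z})\big(1 + |\mathscr{z}-\mathscr{z}_{k,m}|_2/\delta\big)$ that was already controlled in the proof of Theorem \ref{thm 3.8}.

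The decisive estimates are then the two already in hand: Lemma \ref{lem 3.7} bounds the numerator sum $\sum_{k,m}|\mathscr{z}-\mathscr{z}_{k,m}|_2\,\lambda_{k,m}|K_n(\mathscr{z},\mathscr{z}_{k,m})|^s$ by $[1+\ln n]\big(B_2|T_n(x)|^s + B_1|T_n(y)|^s\big)$, while Lemma \ref{lem 3.3} bounds the common denominator $B_{n,s}(\mathscr{z})$ from below by a multiple of $(K_n(\mathscr{z},\mathscr{z}))^{s-1} = \lambda_n^{1-s}(\mathscr{z})$. Combining these produces a bound of the shape $c'\,\omega(f^{(r)},\delta)\{1 + \delta^{-1}\lambda_n^{1-s}(\mathscr{z})[1+\ln n]\big(B_2|T_n(x)|^s + B_1|T_n(y)|^s\big)\}$, and the claimed inequality follows by choosing $\delta$ to balance the bracketed terms, exactly as in the earlier theorem.

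The main obstacle is the factor $\frac{(\mathscr{z}-\mathscr{z}_{k,m})^r}{r!}$ produced by Lemma \ref{lem 5.1}: unlike the $r=0$ case it genuinely depends on the summation indices $k,m$ and so cannot be removed from the weighted average for free. The clean way to handle it is to use compactness of $X$—since $\mathscr{z},\mathscr{z}_{k,m}\in[-1,1]\times[-1,1]$ one has $|\mathscr{z}-\mathscr{z}_{k,m}|_2 \leq 2\sqrt2$, so $\tfrac{|\mathscr{z}-\mathscr{z}_{k,m}|^r}{r!}$ is majorized by a constant depending only on $r$—which is exactly what the constant $c' = \big(\tfrac{(\mathscr{z}-\mathscr{z}_{k,m})^r}{r!}+1\big)$ is meant to encode. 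Care is also required in tracking the exponent of $\lambda_n(\mathscr{z})$ and the powers of $[1+\ln n]$ through the final optimization over $\delta$, since these determine the precise argument of $\omega(f^{(r)},\cdot)$ appearing in the stated estimate.
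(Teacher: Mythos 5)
Your proposal follows essentially the same route as the paper: bound the error by the weighted average of Taylor remainders, apply Lemma \ref{lem 5.1} termwise, linearize the modulus via (\ref{ome}), control the numerator with Lemma \ref{lem 3.7} and the denominator with Lemma \ref{lem 3.3}, and conclude by a choice of $\delta$. In fact you treat the index-dependent factor $(\mathscr{z}-\mathscr{z}_{k,m})^r/r!$ more carefully than the paper does (the paper simply pulls it out of the sum into the ``constant'' $c'$, which formally still depends on $k,m$), so your compactness bound $|\mathscr{z}-\mathscr{z}_{k,m}|_2\le 2\sqrt{2}$ is a genuine improvement in rigor rather than a deviation.
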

%
	
	\begin{proof}
	In view of (\ref{ome}), (\ref{herm}) and Lemma \ref{lem 5.1}, we have
		\begin{flalign*}
	\left| H^{(r)}_{n,s}(f,\mathscr{z}) - f(\mathscr{z}) \right|
	&\leq
	\frac{\displaystyle \sum_{k=1}^{n} \sum_{m=1}^{n} \left|\frac{f^{(j)}(\mathscr{z}_{k,m})}{j!} (\mathscr{z} - \mathscr{z}_{k,m})^j - f(\mathscr{z})\right|
		\, \lambda_{k,m} |K_n(\mathscr{z},\mathscr{z}_{k,m})|^s}
	{\displaystyle \sum_{k=1}^{n} \sum_{m=1}^{n} \lambda_{k,m} |K_n(\mathscr{z},\mathscr{z}_{k,m})|^s}\\
		&\leq
	\frac{\displaystyle \sum_{k=1}^{n} \sum_{m=1}^{n}  \frac{(\mathscr{z} - \mathscr{z}_{k,m})^r}{r!} \omega\big( f^{(r)},  |\mathscr{z}-\mathscr{z}_{k,m}|_2 \big)
		\, \lambda_{k,m} |K_n(\mathscr{z},\mathscr{z}_{k,m})|^s}
	{\displaystyle \sum_{k=1}^{n} \sum_{m=1}^{n} \lambda_{k,m} |K_n(\mathscr{z},\mathscr{z}_{k,m})|^s}\\
		&\leq
	\frac{\displaystyle \sum_{k=1}^{n} \sum_{m=1}^{n}  \frac{(\mathscr{z} - \mathscr{z}_{k,m})^r}{r!}  \left( 1+\frac{|\mathscr{z}-\mathscr{z}_{k,m}|_2}{\delta}\right)\omega\big( f^{(r)},  \delta \big)
		\, \lambda_{k,m} |K_n(\mathscr{z},\mathscr{z}_{k,m})|^s}
	{\displaystyle \sum_{k=1}^{n} \sum_{m=1}^{n} \lambda_{k,m} |K_n(\mathscr{z},\mathscr{z}_{k,m})|^s}.
	\end{flalign*}
	Using Lemma \ref{lem 3.3} and \ref{lem 3.7}, we obtain
	\begin{flalign*}
	\left| H^{(r)}_{n,s}(f,\mathscr{z}) - f(\mathscr{z}) \right|	&\leq \omega\big( f^{(r)},  \delta \big) \left\lbrace \displaystyle \frac{(\mathscr{z} - \mathscr{z}_{k,m})^r}{r!} + \frac{\lambda_n(\mathscr{z})^{s-1}}{\delta}\displaystyle \sum_{k=1}^{n} \sum_{m=1}^{n}|\mathscr{z}-\mathscr{z}_{k,m}|_2  \lambda_{k,m} |K_n(\mathscr{z},\mathscr{z}_{k,m})|^s \right\rbrace \\
	&\leq \omega\big( f^{(r)},  \delta \big) \left\lbrace \displaystyle \frac{(\mathscr{z} - \mathscr{z}_{k,m})^r}{r!} + \frac{\lambda_n^{1-s}(\mathscr{z})\left[ 1 + \ln n \right]}{\delta} \left(B_2|T_n(x)|^s+ B_1|T_n(y)|^s \right) \right\rbrace .
	\end{flalign*}
	Now choosing
	$$\delta= \displaystyle \frac{\left(B_2|T_n(x)|^s+ B_1|T_n(y)|^s \right)}{\lambda_n^{1-s}(\mathscr{z})\left[ 1 + \ln n \right]},$$
	  yields the desired result.
	\end{proof}

	\section{Numerical Simulations and Application}\label{6}
	
	\subsection{Approximation by complex generalized Nevai operators}\label{6.1}
To validate Theorem \ref{thm 3.8}, we present Figure \ref{g1} and Table \ref{t1}, illustrating the approximation  of $f_1$ by complex generalized Nevai operators (\ref{Nevai}).
Here we consider example of a non-analytic function defined as
 $$ f_1(\mathscr{z}) = e^{-|\mathscr{z}|^2} (\mathscr{z} - \overline{\mathscr{z}}) e^{i 6 \arg(\mathscr{z})}, \quad  \mathscr{z} \in X.$$ 

It is evident from  Figure \ref{g1} and Table \ref{t1} that the approximation gets better as we increase the parameter $n.$
 In order to  address different aspects of approximation, we compute the following errors: 
 	\begin{itemize}
 	\item The maximum error by $ e_{\max}^{\Re, \Im} :=\displaystyle \max_{1 \le j \le n_e} e_j^{\Re, \Im}$.
 	\item The  mean error by	$e_{\text{mean}}^{\Re, \Im} := \frac{1}{n_e} \displaystyle\sum_{j=1}^{n_e} e_j^{\Re, \Im}.$
 	\item  The  mean squared error  by $	e_{MS}^{\Re, \Im} := \sqrt{
 		\frac{1}{n_e} \displaystyle \sum_{j=1}^{n_e} \left(e_j^{\Re, \Im}\right)^2
 	}.$
 \end{itemize}

\begin{figure}[h!]
	\centering
	
	\begin{subfigure}[c]{0.32\textwidth}
		\centering
		\includegraphics[width=\linewidth]{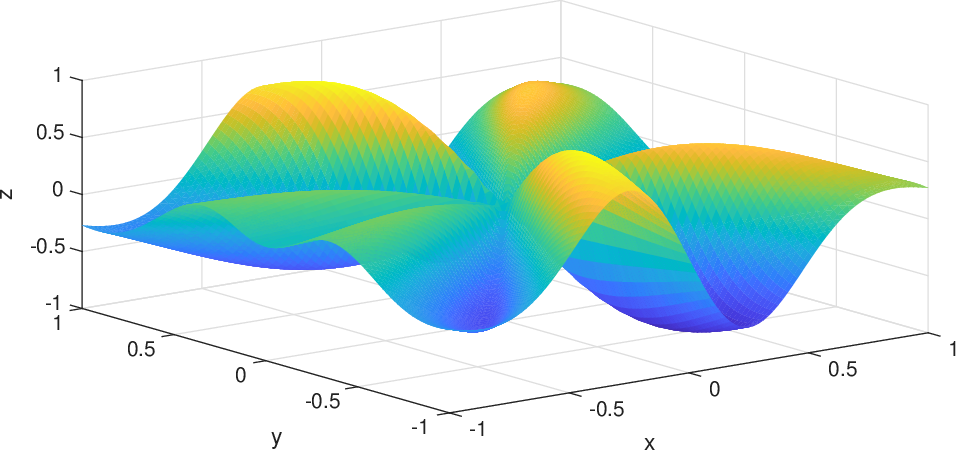}
		\caption{Real part of $f_1$}
	\end{subfigure}
	\hfill
	\begin{subfigure}[c]{0.32\textwidth}
		\centering
		\includegraphics[width=\linewidth]{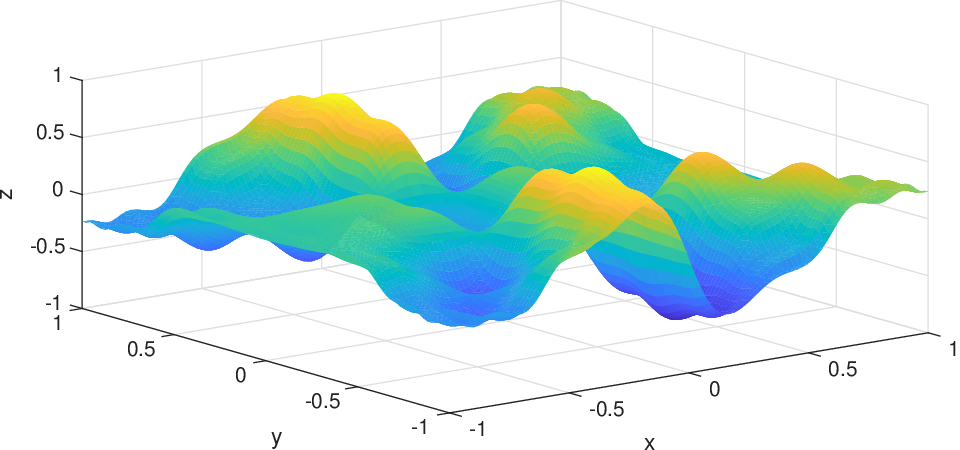}
			\caption{Real part of $f_1$  by $(N_{10,2}f_1)$}
	\end{subfigure}
	\hfill
	\begin{subfigure}[c]{0.32\textwidth}
		\centering
		\includegraphics[width=\linewidth]{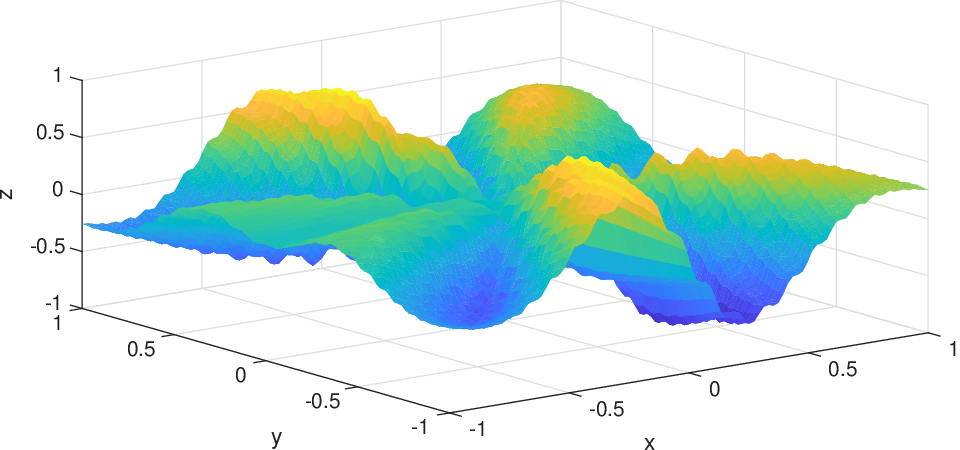}
		\caption{Real part of $f_1$  by $(N_{30,2}f_1)$}
	\end{subfigure}
	
	\vspace{3mm}
	
	\begin{subfigure}[c]{0.32\textwidth}
		\centering
			\includegraphics[width=\linewidth]{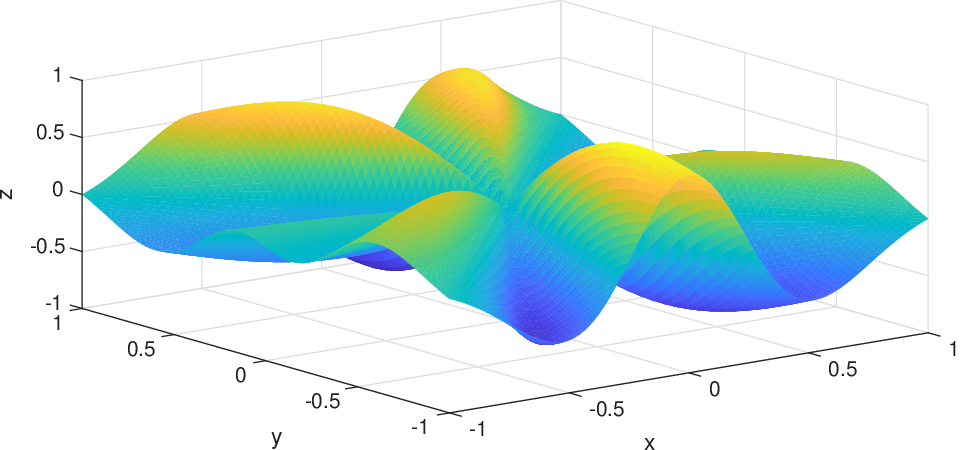}
					\caption{Imaginary part of $f_1$}
	\end{subfigure}
	\hfill
	\begin{subfigure}[c]{0.32\textwidth}
		\centering
		\includegraphics[width=\linewidth]{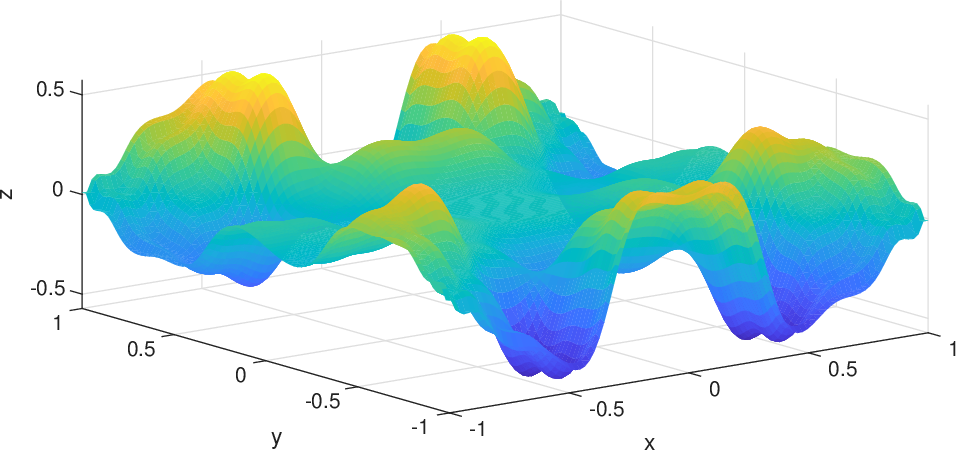}
		\caption{Imaginary part of $f_1$ by $(N_{10,2}f_1)$}
	\end{subfigure}
	\hfill
	\begin{subfigure}[c]{0.32\textwidth}
		\centering
		\includegraphics[width=\linewidth]{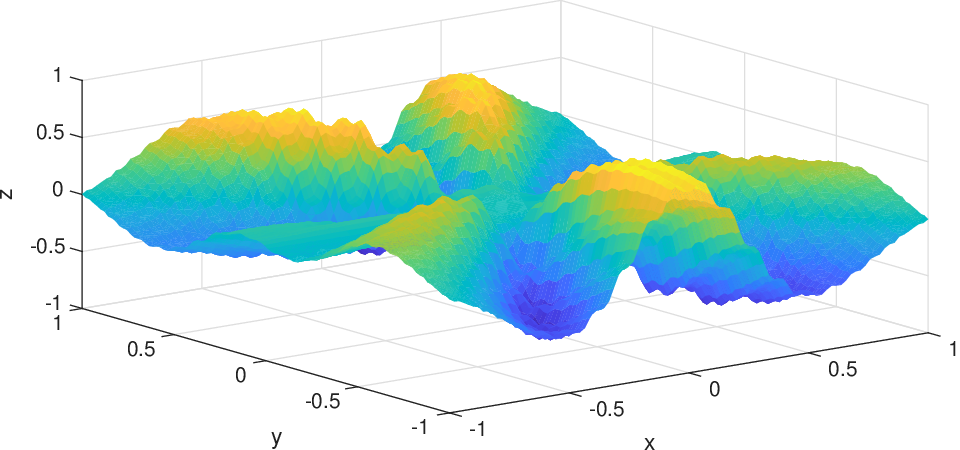}
		\caption{Imaginary part of $f_1$ by $(N_{30,2}f_1)$}
	\end{subfigure}
	
	\caption{Approximation of the real and imaginary  parts of  $f_1$ by $(N_{n,s}f_1)$ for $s=2$ and  different values of $n$ }
	\label{g1}
\end{figure}

\begin{table}[ht]
	\centering
	\caption{Errors in approximation of $f_1$ by  $(N_{n,2}f_1)$ for different values of $n$}
	\label{t1}  \renewcommand{\arraystretch}{1.5}
		{	\rowcolors{2}{blue!10}{white} 
			\begin{tabular}{>{\bfseries}c c c c c c c}
				\rowcolor{headerblue}\color{white}
				n & $e_{max}^\Re$ & $e_{mean}^\Re$ & $e_{MS}^\Re$ & $e_{max}^\Im$ & $e_{mean}^\Im$ & $e_{MS}^\Im$ \\
				\hline
				10 & 6.076e-01 & 1.291e-01 & 1.672e-01 & 9.642e-01 & 1.725e-01 & 2.489e-01 \\
				20 & 3.939e-01 & 8.309e-02 & 1.113e-01 & 5.606e-01 & 8.968e-02 & 1.287e-01 \\
				30 & 3.263e-01 & 6.054e-02 & 8.196e-02 & 3.782e-01 & 6.299e-02 & 8.861e-02 \\
				40 & 2.518e-01 & 4.741e-02 & 6.430e-02 & 2.773e-01 & 4.848e-02 & 6.790e-02 \\
				50 & 2.066e-01 & 3.818e-02 & 5.281e-02 & 2.129e-01 & 3.947e-02 & 5.521e-02 \\
				\hline
			\end{tabular}
		}
	\end{table}

		\subsubsection{Approximation of  contour lines by complex generalized Nevai operators}
		Now, we demonstrate how well a complex generalized Nevai operator (\ref{Nevai}) can approximate contour lines of a non-analytic function given by
			$$g_1(\mathscr{z})=	\overline{\mathscr{z}}, \quad \mathscr{z} \in X.$$
			

\begin{table}[ht]
	\centering
	\caption{Errors in approximation of $|g_1|$ by $|N_{n,2}g_1|$ for different $n$}
	\renewcommand{\arraystretch}{1.3}{	\rowcolors{2}{blue!10}{white}
		\begin{tabular}{>{\bfseries}c c c c }
			\hline
			\rowcolor{headerblue}\color{white}
			$n$ & \multicolumn{3}{c}{Error Analysis}  \\
			\cmidrule(lr){2-4}
			& Maximum error & Mean  error & Mean squared error  \\
			\hline
			10 & 2.238e-01 & 9.819e-02 & 1.064e-01  \\
			20 & 1.070e-01 & 5.408e-02 & 5.836e-02  \\
			30 & 8.492e-02 & 3.767e-02 & 4.073e-02 \\
			40 & 5.903e-02 & 2.776e-02 & 2.973e-02 \\
			50 & 4.515e-02 & 2.244e-02 & 2.426e-02  \\
			\hline
	\end{tabular}}
	\label{g11}
\end{table}

			In Figure \ref{n1}, the original contour lines of $|g_1|$ are shown. Figures \ref{n2}-\ref{n3} present the approximation of contour lines of $|g_1|$ by $|N_{n,2}g_1|$ for $n=10,20$, while Figures \ref{n4}-\ref{n5} illustrate the corresponding absolute error in the approximation for $n=10,20$. As shown in Figure \ref{cont1} and Table \ref{g11},  the operator (\ref{Nevai}) performs better as we   increases the parameter $n$.
	 

		
		\begin{figure}[htbp]
		\centering
		
		\begin{subfigure}[b]{0.25\textwidth}
			\centering
			\includegraphics[width=\textwidth]{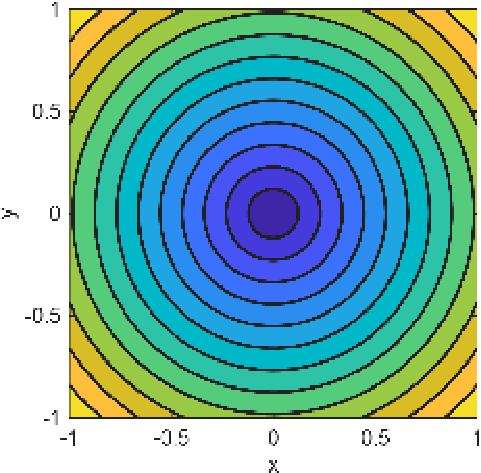}
			\caption{$|g_1|$}
			\label{n1}
		\end{subfigure}
			\hspace{0.05\textwidth} 
		\begin{subfigure}[b]{0.25\textwidth}
			\centering
			\includegraphics[width=\textwidth]{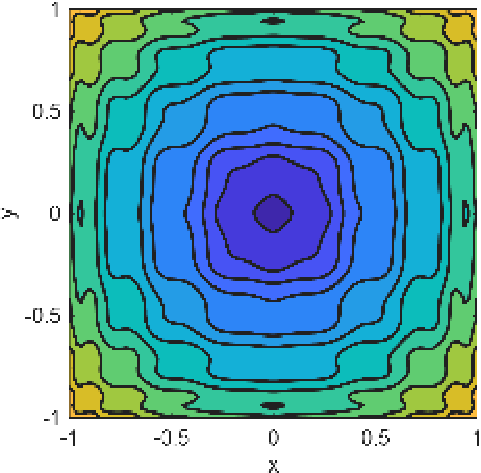}
			\caption{$|N_{10,2}g_1|$}
			\label{n2}
		\end{subfigure}
			\hspace{0.05\textwidth} 
		\begin{subfigure}[b]{0.25\textwidth}
			\centering
			\includegraphics[width=\textwidth]{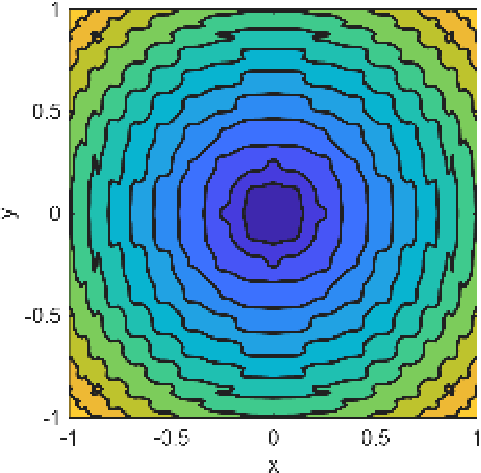}
			\caption{$|N_{20,2}g_1|$}
			\label{n3}
		\end{subfigure}
		
		\vspace{0.5cm} 
		\begin{subfigure}[b]{0.25\textwidth}
			\centering
			\includegraphics[width=\textwidth]{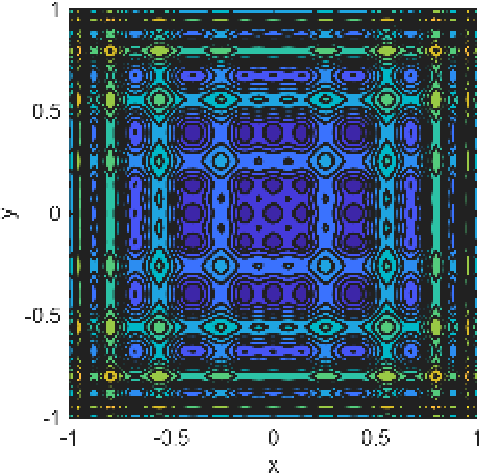}
			\caption{Absolute error for $n=10$}
			\label{n4}
		\end{subfigure}
			\hspace{0.05\textwidth} 
		\begin{subfigure}[b]{0.25\textwidth}
			\centering
			\includegraphics[width=\textwidth]{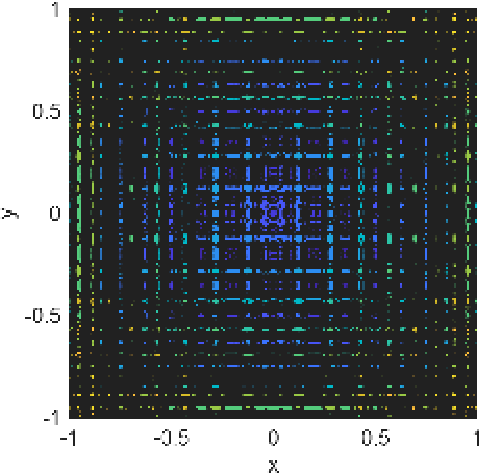} 
			\caption{Absolute error for $n=20$}
			\label{n5}
		\end{subfigure}
		
		\caption{Contour lines of  $|g_1|$, $|N_{n,2}g_1|$ and absolute error for  $n=10,20$}
		\label{cont1}
	\end{figure}

		\subsection{Approximation by complex  Kantorovich type Nevai operators}
	Here, we demonstrate the effectiveness of the complex  Kantorovich type Nevai operators (\ref{kant}) to approximate non analytic but $p-$integrable function defined as
			\[
		f_2(\mathscr{z}) = \left\lfloor 3\,\Re(\mathscr{z}) \right\rfloor 
		+ i\left( 1 - \tfrac{1}{2}\Re(\mathscr{z})^{2} - \tfrac{1}{2}\Im(\mathscr{z})^{2} \right), \quad \mathscr{z} \in X.
		\]
 Figure \ref{g2}  and Table \ref{t2} demonstrate the convergence of (\ref{kant}) for $p-$integrable function  (as established in Theorem \ref{thm 4.6}). 


			\begin{figure}[h!]
				\centering
				
				\begin{subfigure}[c]{0.32\textwidth}
					\centering
				\includegraphics[width=\linewidth]{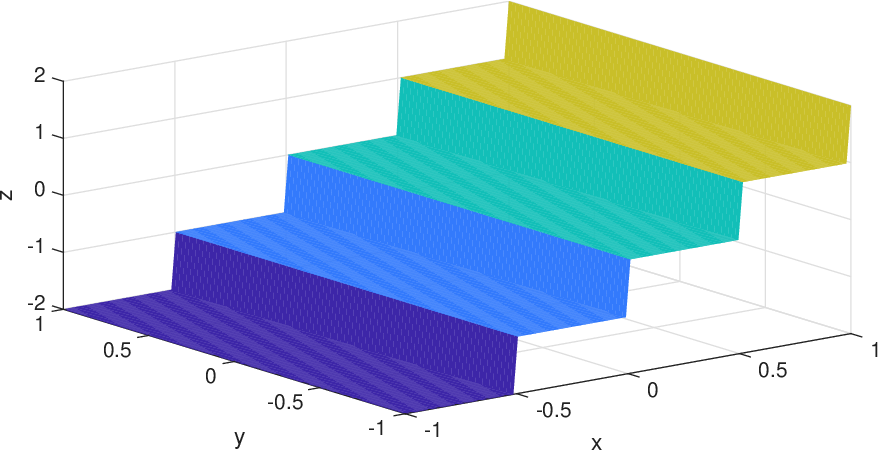}
								\caption{Real part of $f_2$ }
				\end{subfigure}
				\hfill
				\begin{subfigure}[c]{0.32\textwidth}
					\centering
					\includegraphics[width=\linewidth]{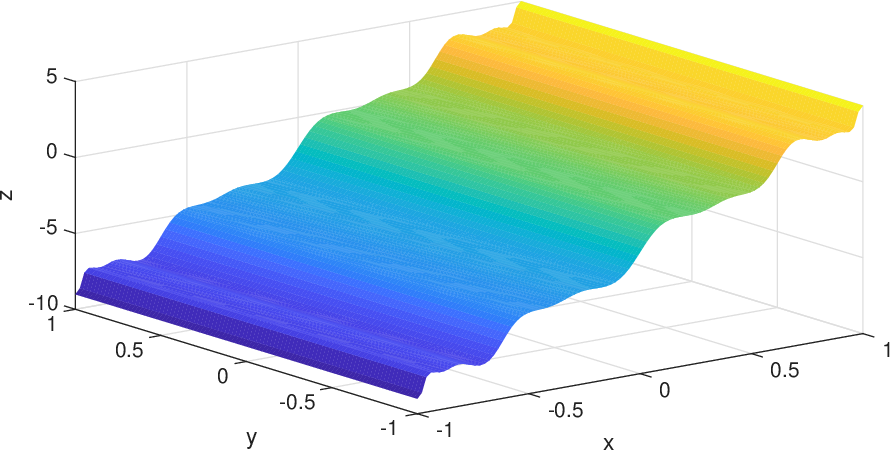}
									\caption{Real part of $f_2$ by $(K_{10,2}f_2)$}
				\end{subfigure}
				\hfill
				\begin{subfigure}[c]{0.32\textwidth}
					\centering
					\includegraphics[width=\linewidth]{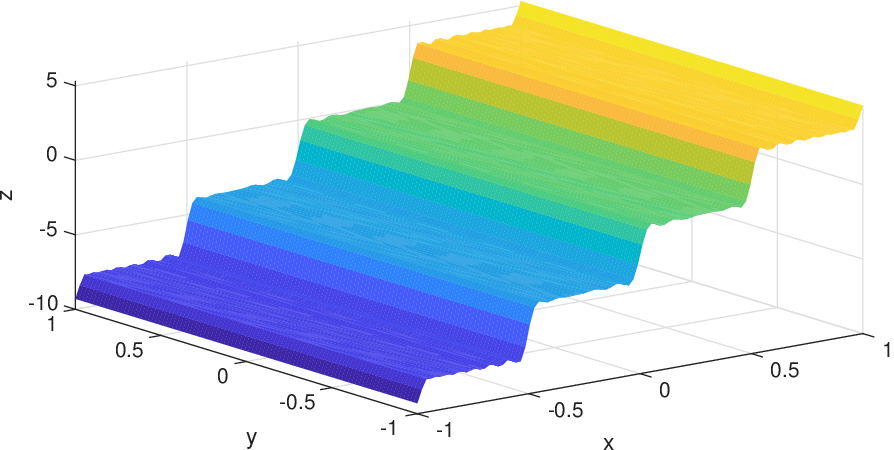}
				\caption{Real part of $f_2$ by $(K_{30,2}f_2)$}
				\end{subfigure}
				
				\vspace{3mm}
				
				\begin{subfigure}[c]{0.32\textwidth}
					\centering
					\includegraphics[width=\linewidth]{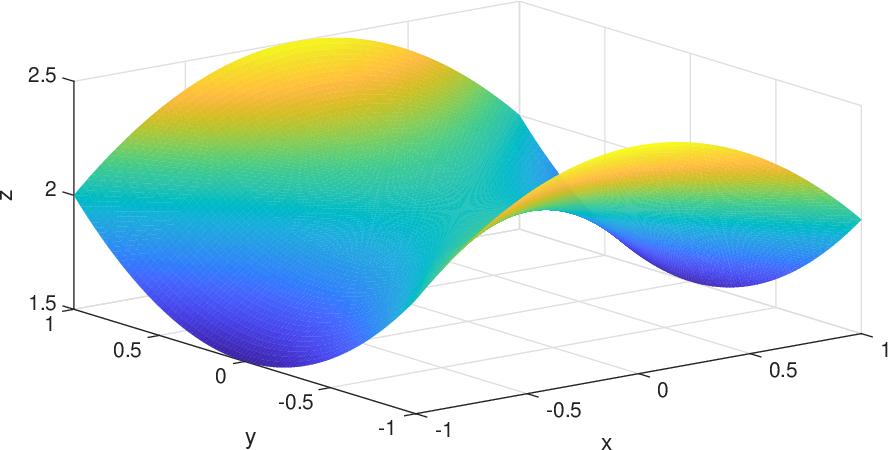}
									\caption{Imaginary part of $f_2$}
				\end{subfigure}
				\hfill
				\begin{subfigure}[c]{0.32\textwidth}
					\centering
			\includegraphics[width=\linewidth]{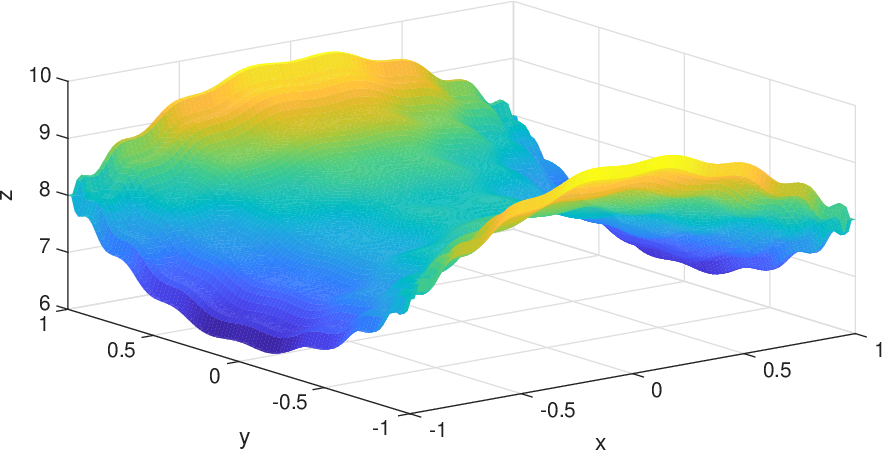}
				\caption{Imaginary part of $f_2$ by $(K_{10,2}f_2)$}
				\end{subfigure}
				\hfill
				\begin{subfigure}[c]{0.32\textwidth}
					\centering
						\includegraphics[width=\linewidth]{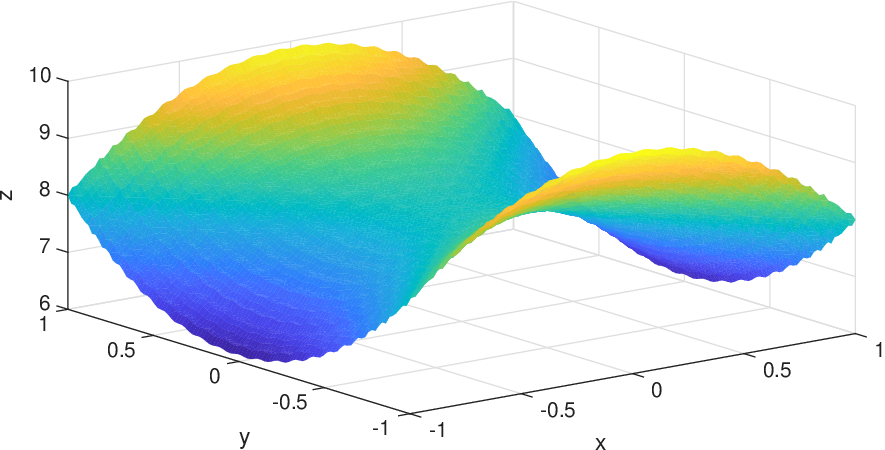}
					\caption{Imaginary part of $f_2$ by $(K_{30,2}f_2)$}
				\end{subfigure}
				
			\caption{Approximation of the real and imaginary  parts of the function $f_2$ by $(K_{n,s}f_2)$ for  $s=2$ and different values of $n$ }
						\label{g2}
			\end{figure}

\begin{table}[ht]
	\centering
	\caption{Errors in approximation  of $f_2$ by $(K_{n,2}f_2)$ for different values of $n$ }
	\label{t2}  \renewcommand{\arraystretch}{1.5}
		{	\rowcolors{2}{blue!10}{white}
			\begin{tabular}{>{\bfseries}c c c c c c c}
				\rowcolor{headerblue}\color{white}
				n & $e_{max}^\Re$ & $e_{mean}^\Re$ & $e_{MS}^\Re$ & $e_{max}^\Im$ & $e_{mean}^\Im$ & $e_{MS}^\Im$ \\
				\hline
				10 & 1.562e+01 & 5.822e+00 & 7.032e+00 &7.473e+00  &3.614e+00  & 4.090e+00 \\
				20 & 1.279e+01 & 4.851e+00 & 5.868e+00 & 7.467e+00 & 3.517e+00 & 4.064e+00 \\
				30 & 1.257e+01 & 4.619e+00 & 5.650e+00 & 7.252e+00 & 3.505e+00 & 3.959e+00 \\
				40 & 1.215e+01 & 4.522e+00 & 5.463e+00 & 7.063e+00 & 3.486e+00 & 3.882e+00 \\
				50 & 1.118e+01 & 4.171e+00 & 5.022e+00 & 6.804e+00 & 3.464e+00 & 3.814e+00 \\
				\hline
		\end{tabular}}
	\end{table}


			\subsubsection{Approximation of contour lines by complex Kantorovich type Nevai operators}
	Here we approximate the contour lines of the $p-$integrable function   given by
	 $$ g_2(\mathscr{z})=
		\begin{cases}
			\mathscr{z}, & \text{if }\operatorname{Re}(\mathscr{z})\geq 0 \\
			2\mathscr{z}, & \text{if }\operatorname{Re}(\mathscr{z})<0.
		\end{cases}$$ 

	\begin{figure}[htbp]
	\centering
	
	\begin{subfigure}[b]{0.25\textwidth}
		\centering
		\includegraphics[width=\textwidth]{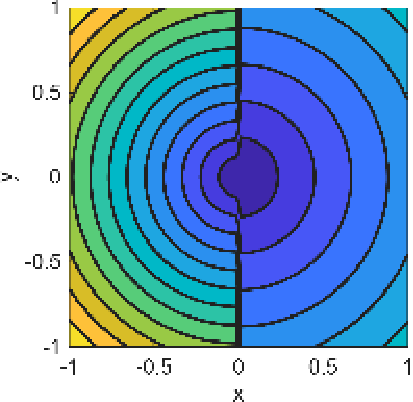}
		\caption{$|g_2|$}
		\label{k1}
	\end{subfigure}
	\hspace{0.05\textwidth} 
	\begin{subfigure}[b]{0.25\textwidth}
		\centering
		\includegraphics[width=\textwidth]{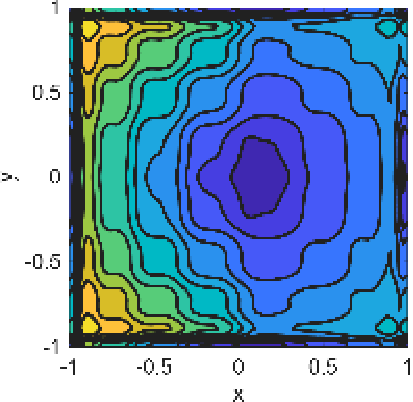}
		\caption{$|K_{10,2}g_2|$}
		\label{k2}
	\end{subfigure}
	\hspace{0.05\textwidth} 
	\begin{subfigure}[b]{0.25\textwidth}
		\centering
		\includegraphics[width=\textwidth]{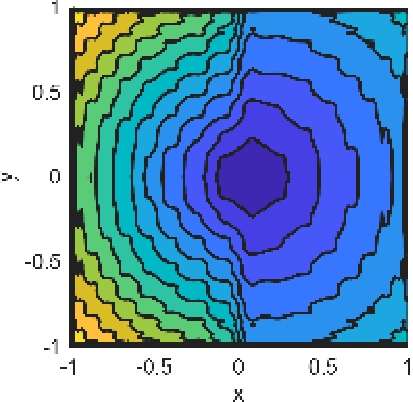}
		\caption{$|K_{20,2}g_2|$}
		\label{k3}
	\end{subfigure}
	
	\vspace{0.5cm} 
	\begin{subfigure}[b]{0.25\textwidth}
		\centering
		\includegraphics[width=\textwidth]{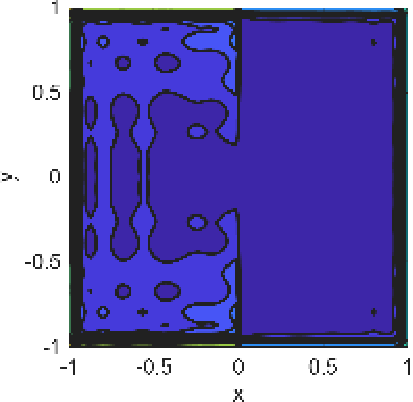}
		\caption{Absolute error for $n=10$}
		\label{k4}
	\end{subfigure}
	\hspace{0.05\textwidth} 
	\begin{subfigure}[b]{0.25\textwidth}
		\centering
		\includegraphics[width=\textwidth]{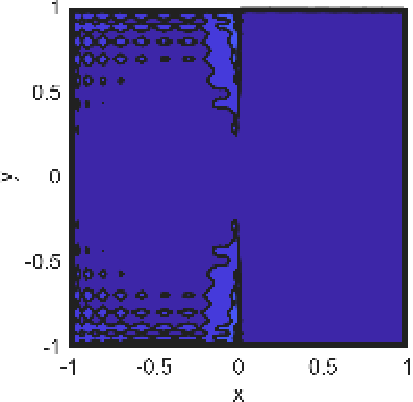} 
		\caption{Absolute error for $n=20$}
		\label{k5}
	\end{subfigure}
	
	\caption{Contour lines of  $|g_2|$, $|K_{n,2}g_2|$  and absolute error for  $n=10,20$}
	\label{cont2}
\end{figure}

	\begin{table}[ht]
	\centering
	\caption{Errors in approximation  of $|g_2|$ by $|K_{n,2}g_2|$ for different $n$ }
	\label{g22}
		\renewcommand{\arraystretch}{1.2}
		{	\rowcolors{2}{blue!10}{white}
			\begin{tabular}{>{\bfseries}c c c c }
				\hline
				\rowcolor{headerblue}\color{white}
				$n$ & \multicolumn{3}{c}{Error Analysis}  \\
				\cmidrule(lr){2-4}
				& Maximum error & Mean  error & Mean squared error \\
				\hline 
				10 & 2.52e+00 & 3.20e-01 & 5.07e-01  \\
				20 & 2.35e+00 & 1.63e-01 & 3.32e-01  \\
				30 & 2.29e+00 & 1.32e-01 & 3.13e-01  \\
				40 & 2.26e+00 & 1.16e-01 & 3.04e-01 \\
				50 & 2.24e+00 & 1.05e-01 & 2.99e-01  \\
				\hline
		\end{tabular}}
	\end{table}

		Figure \ref{k1} shows the original contour lines of $|g_2|$, and approximated contour lines of $|g_2|$ by  $|K_{n,2} g_2|$, for $n=10$ and $n=20$ are presented 
		in Figures \ref{k2}--\ref{k3}. Along with this, the corresponding absolute 
		error plots are given in Figures  \ref{k4}--\ref{k5}. 
		 Hence, we conclude that the complex Kantorovich type Nevai operator (\ref{kant}) provides a better approximation of the contour lines of the $p-$integrable function as we increase the value of $n$ (see 	Figure \ref{cont2} and
		 		Table \ref{g22}).

			\subsection{Approximation by complex  Hermite type Nevai operators}\label{6.3}
		Here, we consider an example of an analytic function  which is defined as 
				$$	f_3(\mathscr{z}) = \cos(\pi \mathscr{z}) + i \sin(\pi \mathscr{z}), \quad \mathscr{z} \in X.$$
	
			
				
				\begin{table}[ht]
					\centering
					\caption{Errors in approximation  of $f_3$ by  $(H^{(3)}_{n,2}f_3)$  for different values of $n$}
					\label{t3}
					\renewcommand{\arraystretch}{1.4}	{	\rowcolors{2}{blue!10}{white}
						\begin{tabular}{>{\bfseries}c c c c c c c}
							\rowcolor{headerblue}\color{white}
							n & $e_{max}^\Re$ & $e_{mean}^\Re$ & $e_{MS}^\Re$ & $e_{max}^\Im$ & $e_{mean}^\Im$ & $e_{MS}^\Im$\\
							\hline
							10 & 8.730e+00 & 1.145e+00 & 1.852e+00 & 1.732e+01 & 2.027e+00 & 3.002e+00 \\
							20 & 4.978e+00 & 7.144e-01 & 1.144e+00 & 9.736e+00 & 1.173e+00 & 1.757e+00 \\
							30 & 3.374e+00 & 5.183e-01 & 8.276e-01 & 6.571e+00 & 8.284e-01 & 1.246e+00 \\
							40 & 2.502e+00 & 4.020e-01 & 6.320e-01 & 4.867e+00 & 6.306e-01 & 9.397e-01 \\
							50 & 1.974e+00 & 3.307e-01 & 5.169e-01 & 3.838e+00 & 5.130e-01 & 7.651e-01 \\
							\hline
					\end{tabular}}
				\end{table}

\begin{figure}[h!]
	\centering
	
	\begin{subfigure}[c]{0.32\textwidth}
		\centering
		\includegraphics[width=\linewidth]{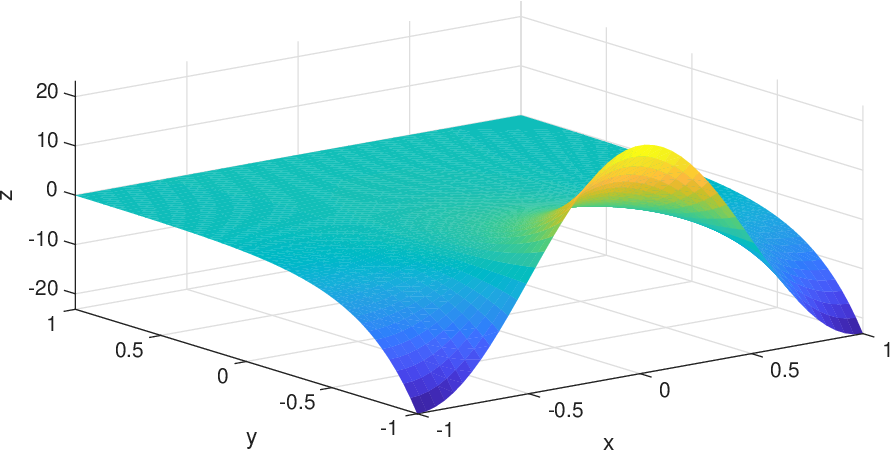}
						\caption{Real part of $f_3$ }
	\end{subfigure}
	\hfill
	\begin{subfigure}[c]{0.32\textwidth}
		\centering
	\includegraphics[width=\linewidth]{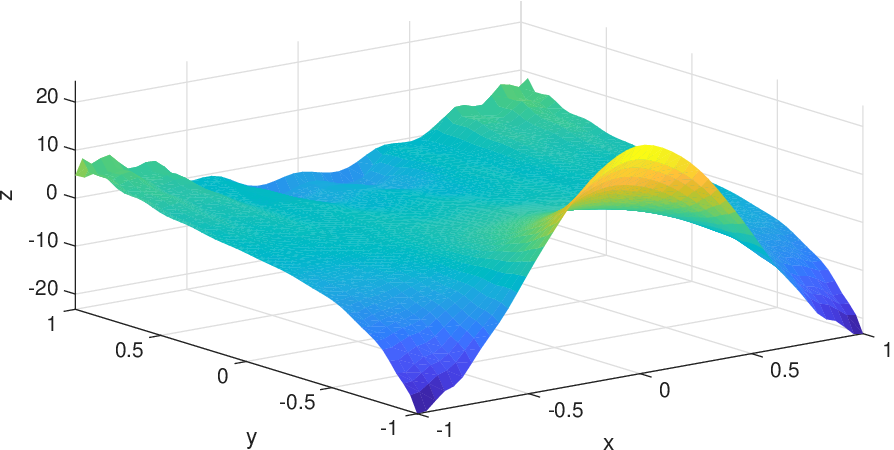}
						\caption{Real part of $f_3$ by $(H^{(3)}_{10,2}f_3)$}
	\end{subfigure}
	\hfill
	\begin{subfigure}[c]{0.32\textwidth}
		\centering
		\includegraphics[width=\linewidth]{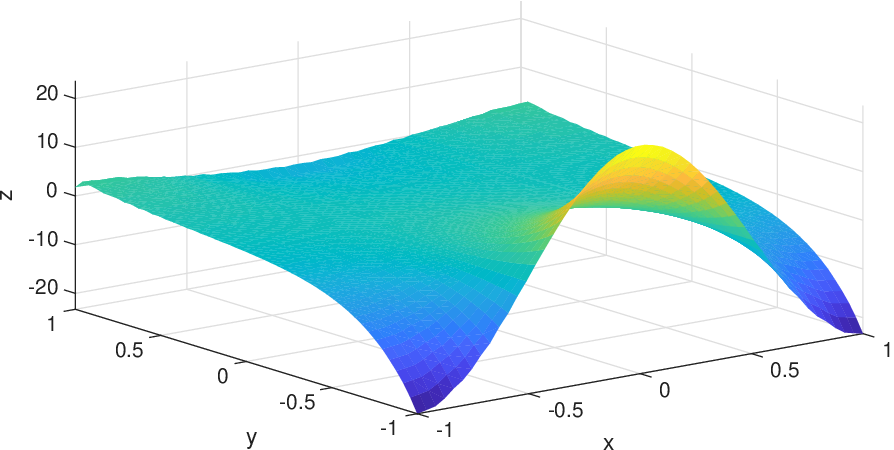}
	\caption{Real part of $f_3$ by $(H^{(3)}_{30,2}f_3)$}
	\end{subfigure}
	
	\vspace{3mm}
	
	\begin{subfigure}[c]{0.32\textwidth}
		\centering
		\includegraphics[width=\linewidth]{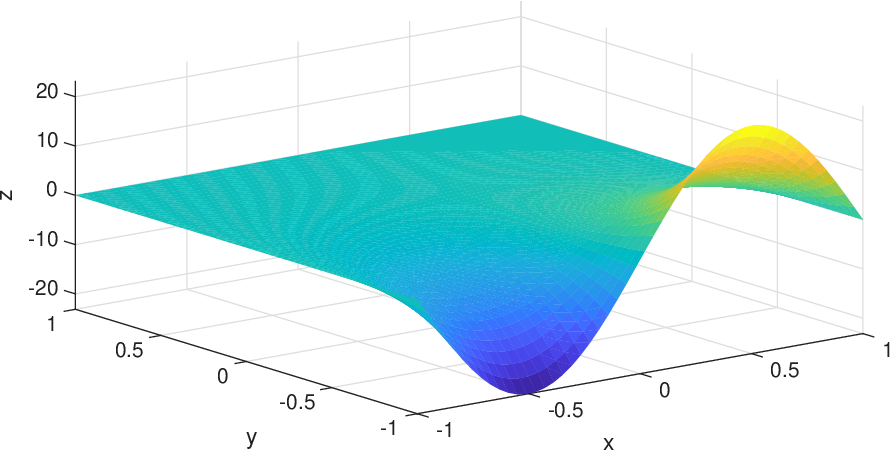}
						\caption{Imaginary part of $f_3$}
	\end{subfigure}
	\hfill
	\begin{subfigure}[c]{0.32\textwidth}
		\centering
		\includegraphics[width=\linewidth]{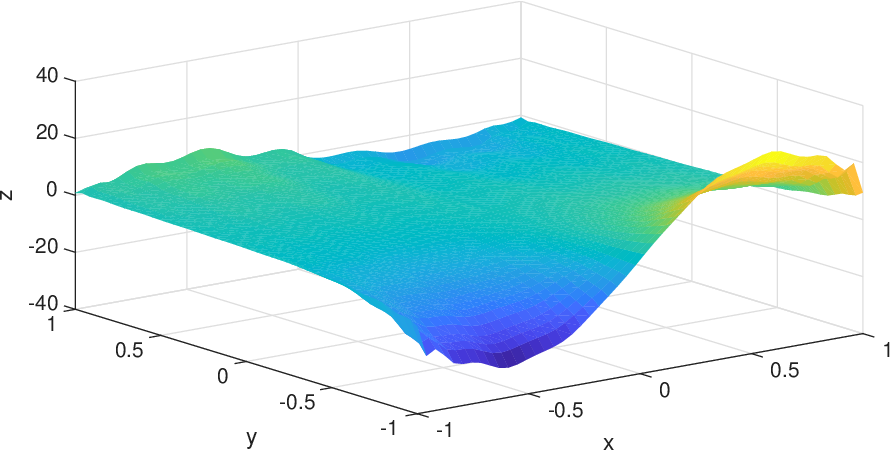}
						\caption{Imaginary part of $f_3$ by $(H^{(3)}_{10,2}f_3)$}
	\end{subfigure}
	\hfill
	\begin{subfigure}[c]{0.32\textwidth}
		\centering
		\includegraphics[width=\linewidth]{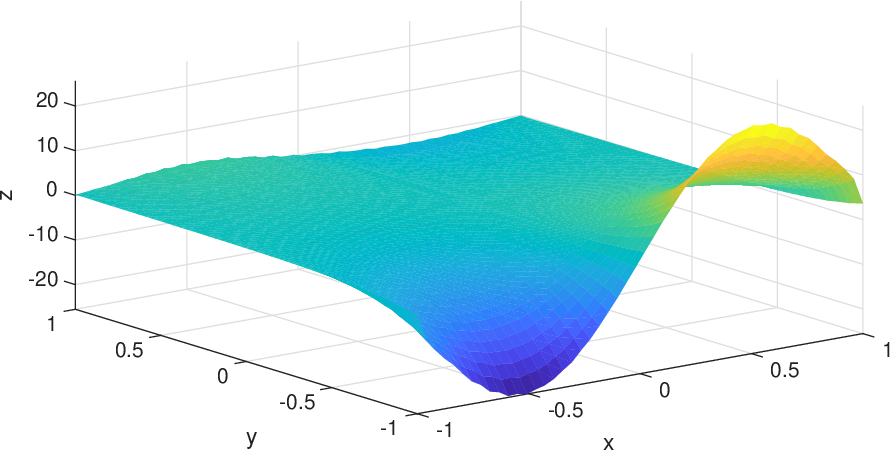}
	\caption{Imaginary part of $f_3$ by $(H^{(3)}_{30,2}f_3)$}
	\end{subfigure}
	
	\caption{Approximation of the real and imaginary  parts of the function $f_3$ by $(H^{(3)}_{n,s}f_3)$ for $s=2$ and different values of $n$  }
				\label{g3}
\end{figure}

				To illustrate the convergence (as established in Theorem \ref{thm 5.2}),  we present Figure \ref{g3}  and Table \ref{t3}. The results show that the performance of the operator (\ref{herm})  improves as $n$ increases.

				\subsubsection{Approximation of  contour by complex Hermite type Nevai operators}
					The contour lines of the original function $|g_3|$ are shown in Figure \ref{h1}, where $g_3$ is given by
					  $$g_3(\mathscr{z})=\sin\left(\mathscr{z}^2\right), \quad  \mathscr{z} \in X.$$

						\begin{figure}[htbp]
					\centering
					
					\begin{subfigure}[b]{0.25\textwidth}
						\centering
						\includegraphics[width=\textwidth]{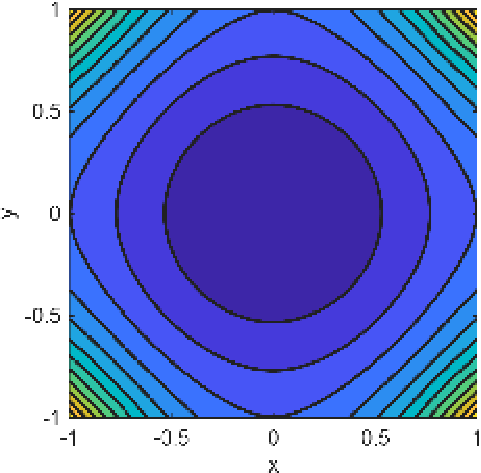}
						\caption{$|g_3|$}
						\label{h1}
					\end{subfigure}
						\hspace{0.05\textwidth} 
					\begin{subfigure}[b]{0.25\textwidth}
						\centering
						\includegraphics[width=\textwidth]{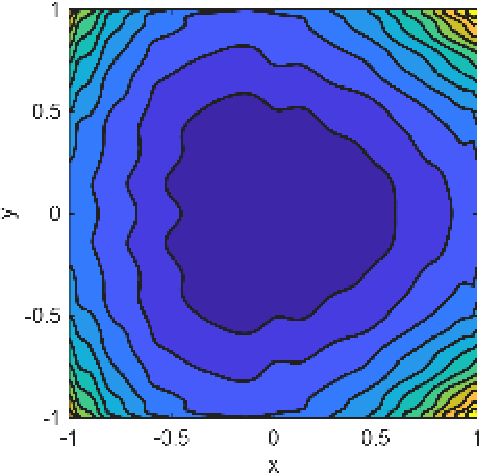}
						\caption{$|H^{(2)}_{10,2}g_3|$}
						\label{h2}
					\end{subfigure}
						\hspace{0.05\textwidth} 
					\begin{subfigure}[b]{0.25\textwidth}
						\centering
						\includegraphics[width=\textwidth]{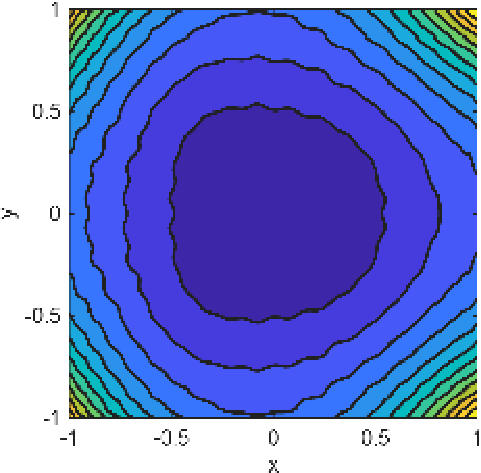}
						\caption{$|H^{(2)}_{20,2}g_3|$}
						\label{h3}
					\end{subfigure}
					
					\vspace{0.5cm} 
					\begin{subfigure}[b]{0.25\textwidth}
						\centering
						\includegraphics[width=\textwidth]{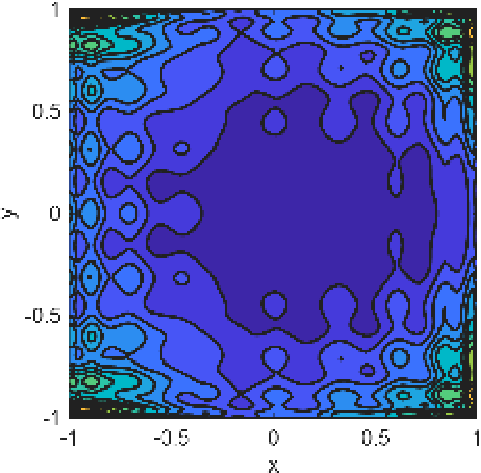}
						\caption{Absolute error for $n=10$}
						\label{h4}
					\end{subfigure}
						\hspace{0.05\textwidth} 
					\begin{subfigure}[b]{0.25\textwidth}
						\centering
						\includegraphics[width=\textwidth]{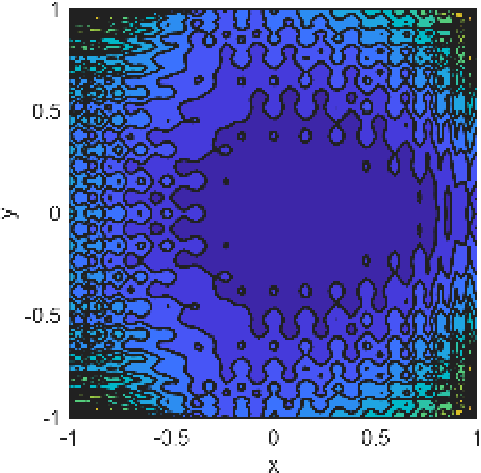} 
						\caption{Absolute error for $n=20$}
						\label{h5}
					\end{subfigure}
					
					\caption{Contour lines of  $|g_3|$, $|H^{(2)}_{n,2}g_3|$ and absolute error for   $n=10,20$}
					\label{cont3}
				\end{figure}
				
					Figures \ref{h2}--\ref{h3} show the contour lines of  $|g_3|$ by
				$|H^{(2)}_{n,2} g_3|$ for $n=10$ and $n=20$,  while Figures
				\ref{h4}--\ref{h5} present  the corresponding absolute error plots. 
				From Figure \ref{cont3} and Table \ref{g33}, we can observe that the complex Hermite type Nevai operator (\ref{herm}) performs well in approximating the contour lines of the analytic function. 
				
				\begin{table}[ht]
					\centering
					\caption{Errors in approximation of $|g_3|$ by $|H^{(2)}_{n,2}g_3|$ for different $n$ }
					\label{g33}
					\renewcommand{\arraystretch}{1.3}	{	\rowcolors{2}{blue!10}{white}
						\begin{tabular}{>{\bfseries}c c c c}
							\hline
							\rowcolor{headerblue}\color{white}
							$n$ & \multicolumn{3}{c}{Error Analysis}  \\
							\cmidrule(lr){2-4}
							& Maximum error & Mean  error & Mean squared error  \\
							\hline
							10 & 7.67e-01 & 1.55e-01 & 2.00e-01  \\
							20 & 4.13e-01 & 9.04e-02 & 1.16e-01  \\
							30 & 2.96e-01 & 6.43e-02 & 8.30e-02  \\
							40 & 2.26e-01 & 4.96e-02 & 6.36e-02 \\
							50 & 1.80e-01 & 4.03e-02 & 5.16e-02 \\
							\hline
					\end{tabular}}
				\end{table}

					
				

			\subsection{Comparison between complex generalized Nevai and complex Hermite type Nevai operators}\label{6.4}
			Now we compare the approximation performance of complex  generalized Nevai operators (\ref{Nevai}) and complex Hermite type Nevai operators (\ref{herm}) in approximating an analytic function. For this,
			 we consider the analytic function  as 
			 $$f_4(\mathscr{z}) = 0.5 \bigl( \mathscr{z}^{4} - 1.5 \mathscr{z}^2 + 0.3 \bigr), \quad  \mathscr{z} \in X.$$

			\begin{figure}[h!]
				\centering
				
				\begin{subfigure}[c]{0.32\textwidth}
					\centering
				\includegraphics[width=\linewidth]{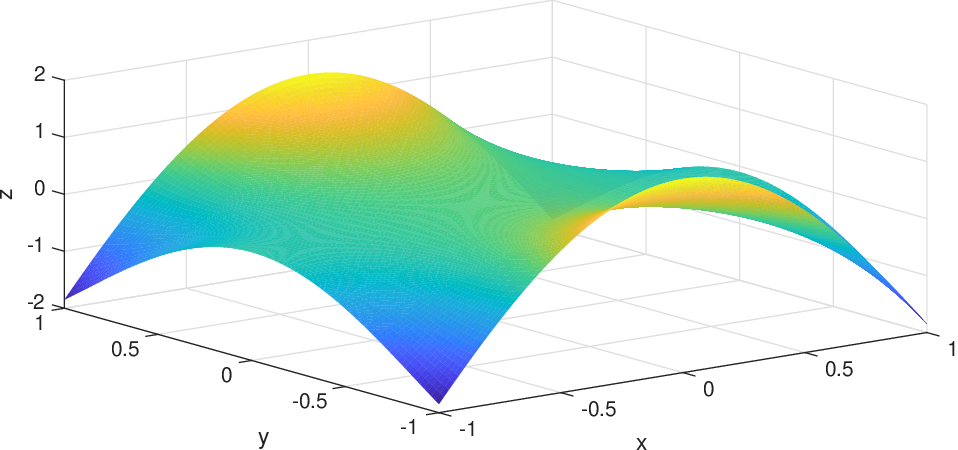}
								\caption{Real part of $f_4$ }
				\end{subfigure}
				\hfill
				\begin{subfigure}[c]{0.32\textwidth}
					\centering
						\includegraphics[width=\linewidth]{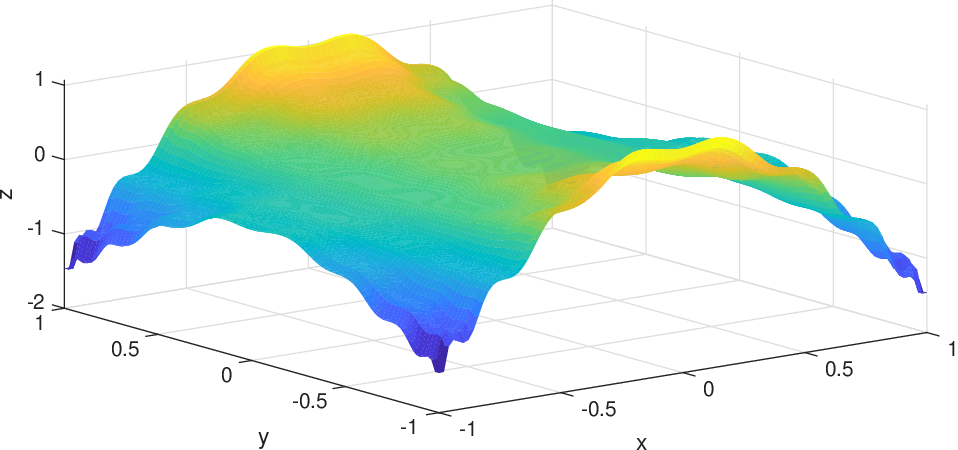}
										\caption{Real part of $f_4$ by $(N_{10,2}f_4)$}
				\end{subfigure}
				\hfill
				\begin{subfigure}[c]{0.32\textwidth}
					\centering
				\includegraphics[width=\linewidth]{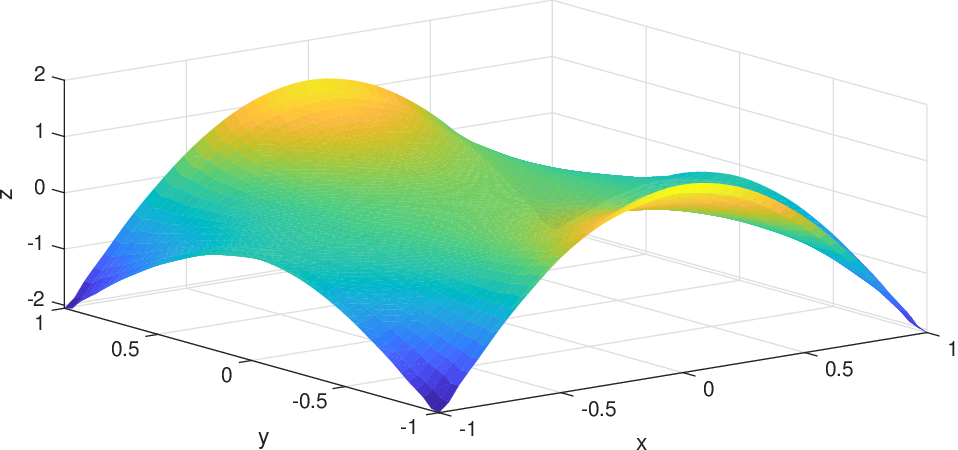}
			\caption{Real part of $f_4$ by $(H^{(3)}_{10,2}f_4)$}
				\end{subfigure}
				
				\vspace{3mm}
				
				\begin{subfigure}[c]{0.32\textwidth}
					\centering
						\includegraphics[width=\linewidth]{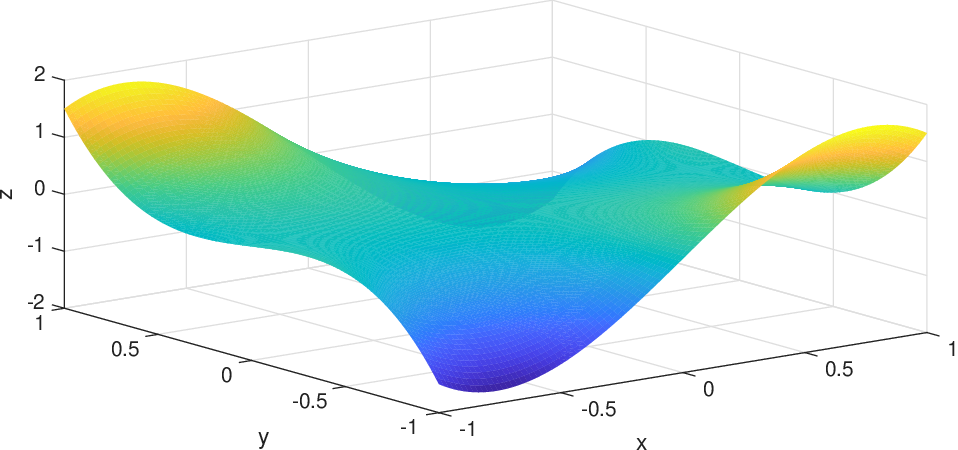}
									\caption{Imaginary part of $f_4$}
				\end{subfigure}
				\hfill
				\begin{subfigure}[c]{0.32\textwidth}
					\centering
						\includegraphics[width=\linewidth]{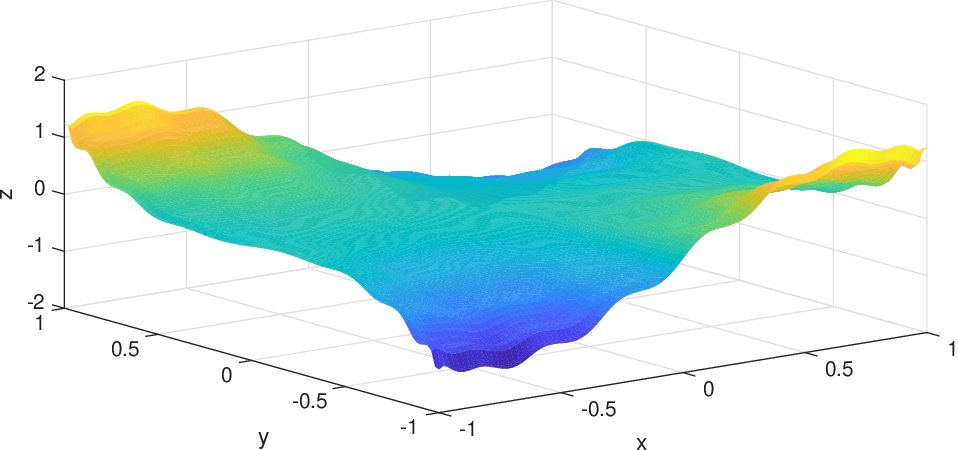}
					\caption{Imaginary part of $f_4$ by $(N_{10,2}f_4)$}
					
				\end{subfigure}
				\hfill
				\begin{subfigure}[c]{0.32\textwidth}
					\centering
					\includegraphics[width=\linewidth]{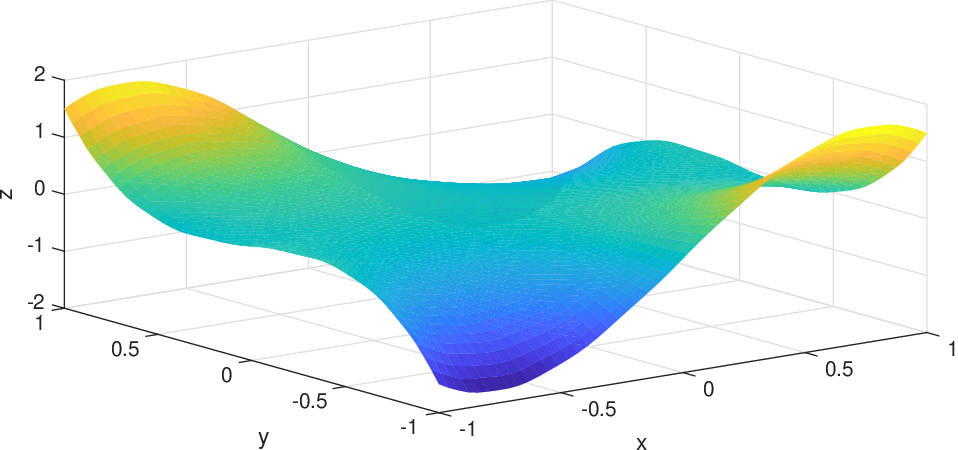}
								\caption{Imaginary part of $f_4$ by $(H^{(3)}_{10,2}f_4)$}
				\end{subfigure}
				
				\caption{Approximation of the real and imaginary  parts of the function $f_4$ by $(N_{n,s}f_4)$ and $(H^{(3)}_{n,s}f_4)$ for $s=2$ and $n=10$  }
							\label{g4}
			\end{figure}

				\begin{table}[ht]
				\centering
				\caption{Errors in approximation of $f_4$ by $(N_{n,2}f_4)$ and $(H^{(3)}_{n,2}f_4)$ for different values of $n$ }
				\label{t4}
				 \renewcommand{\arraystretch}{1.4}
						\begin{tabular}{c| c  c c}
							\hline
							\rowcolor{headerblue}\color{white}
							$n$ & Types of  Operator &  $e_{max}^\Re$  & $e_{max}^\Im$  \\ 
							\hline
							\multirow{2}{*}{$10$} & Generalized Nevai operator (\ref{Nevai}) & 5.180e-01 & 5.323e-01 \\ \cline{2-4} 
							& Hermite type Nevai operator (\ref{herm}) & 3.193e-01  & 2.238e-02\\ \hline
							\multirow{2}{*}{$20$} & Generalized Nevai operator (\ref{Nevai}) & 2.300e-01 & 2.722e-01  \\ \cline{2-4}
							&  Hermite type  Nevai operator (\ref{herm}) & 1.901e-01  & 7.467e-03  \\ \hline
							\multirow{2}{*}{$30$} & Generalized Nevai operator (\ref{Nevai})   & 2.322e-01  & 2.291e-01\\ \cline{2-4}
							&  Hermite type Nevai operator (\ref{herm}) & 1.292e-01  & 3.544e-03  \\ \hline
							\multirow{2}{*}{$40$} & Generalized Nevai operator (\ref{Nevai}) & 1.364e-01  & 1.485e-01 \\ \cline{2-4} 
							&  Hermite type Nevai operator (\ref{herm})  & 9.499e-02 & 2.235e-03  \\
							\hline
						\end{tabular}
					\end{table}
					
					As shown in Figure \ref{g4} and Table \ref{t4}, the complex Hermite type Nevai operators (\ref{herm}) demonstrates superior performance compared to the complex generalized Nevai operators (\ref{Nevai}). These findings indicate that the approximation accuracy might get improved if the higher-order derivatives of the target function are available.
					

			\subsection{Application in Image reconstruction}\label{6.5}
			A complex-valued image is an extension of a standard grayscale image in which each pixel is represented not by a single real number, but by a complex number that encodes both amplitude and phase information. Mathematically, a complex image can be expressed as 
		$f(x,y)=A(x,y) e^{i \phi(x,y)}$ where $A(x,y)$ denotes the amplitude  at pixel $(x,y)$ and $\phi(x,y)$ denotes the phase, representing the angular component of the complex number. The amplitude corresponds to the conventional brightness or magnitude of the pixel, while the phase carries additional structural or wavefront information, which is particularly relevant in applications involving waves, such as optics, holography, and interferometry.  This representation allows one to process and analyze both the magnitude and phase of the image simultaneously, which is essential in many scientific and engineering applications, including wavefront reconstruction, phase imaging, and Fourier-domain signal processing \cite{phase}.

			A complex-valued image with a specific resolution size $u \times v$ is a discrete structure composed of a finite set of pixels captured by an image system, from which a corresponding grayscale image matrix $(c_{ij})_{i,j \in\NN}, i=1,...,u; j=1,...,v$, \hspace{1pt} can be derived. The matrix representation of the gray scale image matrix can be viewed as a two-dimensional step function $A$ in $L^{p}(X)$, where $1 \leq p <\infty.$ The function $A$ is defined as follows
		\begin{equation}\label{ii}
			A(x,y)=\sum_{i=1}^{u}\sum_{j=1}^{v}c_{ij}\cdot \mathbf{1}_{ij}(x,y)
		\end{equation}
		where 	\[
		\mathbf{1}_{ij}(x,y) := 
		\begin{cases} 
			1, &  (x,y)\in (i-1,i]\times (j-1,j], \\
			0, & otherwise.
		\end{cases}
		\]
		Thus, $A(x,y)$ maps every index pair $(i,j)$ to the corresponding value $c_{ij}$.
		
	In this section, we discuss the reconstruction of both the magnitude and phase of an image using the complex Kantorovich type Nevai operators  (\ref{kant}).
			To analyze the quality of the reconstructed image, we utilize the following measures to evaluate the  reconstruction performance:
		\begin{enumerate}
		
	\item 	\textbf{The Structural Similarity Index Measure (SSIM)}: It evaluates the quality of an image primarily by analyzing its brightness, contrast, and structure, defined by
		\[
	SSIM = \frac{(2\mu_H\mu_W + d_1)(2\sigma_{HW} + d_2)}{(\mu_H^2 + \mu_W^2 + d_1)(\sigma_H^2+ \sigma_W^2 + d_2)}.
	\]
		where  $\mu_H$ and $\mu_W$ are the mean values,
		$\sigma_H^2$ and $\sigma_W^2$ are their variances, and
		$\sigma_{HW}$ is the covariance of $H$ and $W$. Moreover,
		$d_1 = (k_1L)^2$, $d_2 = (k_2L)^2$, and
		$L$ is the range of pixel values with $k_1 = 0.01$, $k_2 = 0.03$.\\
%

		
	\item 	\textbf{The Peak Signal to Noise Ratio (PSNR):} The MSE of the reconstructed image
		can be written as
		\[
		MSE = \frac{1}{HW}\sum_{i=1}^{H} \sum_{j=1}^{W}[H(i,j)-W(i,j)]^2,
		\]
		where original and reconstructed images are denoted by \( H \) and \( W \) respectively. Similarly, \( H(i, j) \) and \( W(i, j) \) represent the pixel values at the corresponding coordinates. Moreover, the corresponding PSNR is given by
		$$PSNR = 10 \log_{10} \left( \frac{(MAX)^2}{MSE} \right),$$
		where \( $'$MAX$'$ \) denotes the maximum possible pixel value of the image.\\
		
		\item \textbf{The Root Mean Square Error (RMSE):} It measures the pixel-wise difference between an original image and a reconstructed or processed image.
			For an image of size $H \times W$,  RMSE is defined as
		
		\[
	RMSE = \sqrt{ \frac{1}{HW} \sum_{i=1}^{H} \sum_{j=1}^{W} \Big( H(i,j) - W(i,j) \Big)^2 }.
		\]

		\end{enumerate}

					\begin{figure}[h]
						\centering
						
						\begin{subfigure}{0.22\textwidth}
							\includegraphics[width=\linewidth]{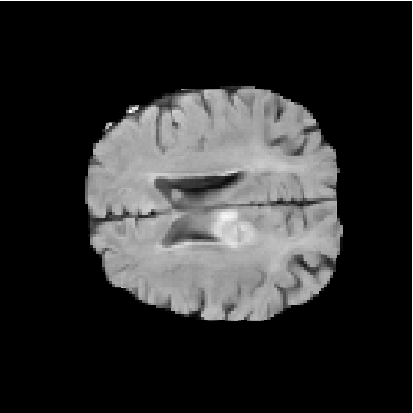}
							\caption{The original brain image\\ (amplitude)}%
						\end{subfigure}
							\begin{subfigure}{0.22\textwidth}
										\includegraphics[width=\linewidth]{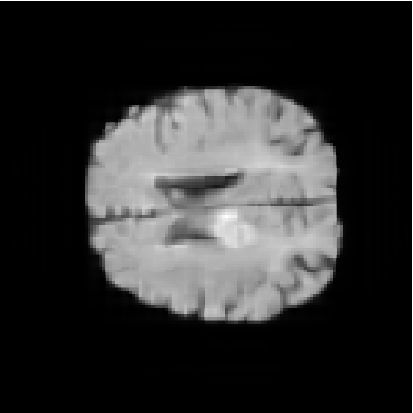}
										\caption{Reconstructed amplitude using (\ref{kant})}
						\end{subfigure}
						\begin{subfigure}{0.22\textwidth}
								\includegraphics[width=\linewidth]{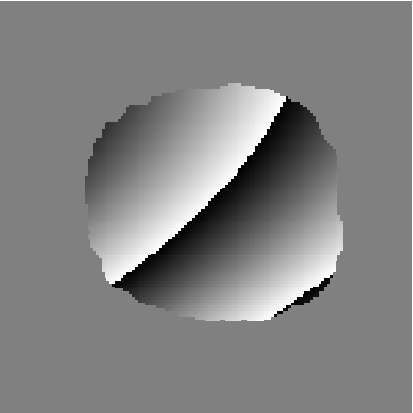}
									\caption{The original brain image\\ (Phase)}
						\end{subfigure}
						\begin{subfigure}{0.22\textwidth}
	\includegraphics[width=\linewidth]{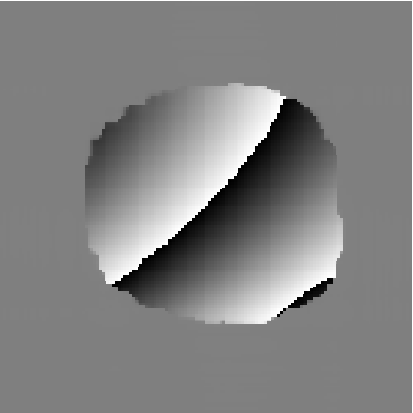}
							\caption{Reconstructed phase using\\ (\ref{kant})}
						\end{subfigure}
						
					
						\caption{Reconstruction of amplitude and phase  using complex Kantorovich type Nevai operators (\ref{kant})  for $n=170$ and $s=2$ }
							\label{fig}
					\end{figure}

					\begin{table}[ht]
						\centering
						\caption{Amplitude and Phase of the brain image using complex Kantorovich type Nevai operators (\ref{kant}) for different $n$ and $s=2$} 
						\label{ap}
						\renewcommand{\arraystretch}{1.4}
						\rowcolors{2}{blue!10}{white}
						\begin{tabular}{>{\bfseries}c c c c c c c}
							\hline
							\rowcolor{headerblue}\color{white}
							$n$ & \multicolumn{3}{c}{Amplitude} & \multicolumn{3}{c}{Phase}  \\ 
							\cmidrule(lr){2-4} \cmidrule(lr){5-7}
							& RMSE & PSNR & SSIM & RMSE & PSNR & SSIM  \\
							\hline
							20 & 0.0902 & 20.09 & 0.4136 & 1.5453 & 20.12 & 0.8175 \\
							60 & 0.0542 & 24.52 & 0.6913 & 0.9958 & 23.94 & 0.8821 \\
							110 & 0.0380 & 27.60 & 0.8423 & 0.7942 & 25.90 & 0.9211\\
							170 & 0.0301 & 29.62 & 0.9199 &  0.3835 & 32.23 & 0.9746 \\
							\hline
						\end{tabular}
					\end{table}

					 Figure~\ref{fig} illustrate the visual quality of the reconstructed image (amplitude and phase) obtained through the complex Kantorovich type Nevai operators (\ref{kant}). 
					Table  \ref{ap} provides the numerical justification of the reconstruction process using standard measures such as PSNR, RMSE and SSIM.

\section{Concluding Remarks}
This paper provides a comprehensive and constructive study on approximation capabilities of a prominent family of complex Nevai operators targeted to approximate analytic as well as non-analytic functions. We construct certain complex interpolation operators such as complex generalized Nevai operators (\ref{Nevai}), complex Kantorovich type Nevai operators (\ref{kant}) and complex Hermite type Nevai operators (\ref{herm}) and examine their convergence behavior for suitable target functions. Some quantitative approximation results are established using appropriate measures such as modulus of continuity and Peetre’s K-functional. Numerical simulations validate the theoretical outcomes as proposed operators approximate various complex-valued functions including analytic functions (see Figure \ref{g1}-\ref{cont1}, Table \ref{t1}-\ref{g11}), integrable functions (see Figure \ref{g2}-\ref{cont2}, Table \ref{t2}-\ref{g22}), and non-analytic functions (see Figure \ref{g3}-\ref{cont3}, Table \ref{t3}-\ref{g33}). Our results indicate that the use of derivative sampling can significantly improve the approximation accuracy, provided that the higher-order derivatives of the target function are available (see Figure \ref{g4}, Table \ref{t4}).
Furthermore, the effectiveness of the complex Kantorovich type Nevai operators (\ref{kant}) in reconstructing real brain image data can be observed in Figure \ref{fig} and Table \ref{ap}.

		\section{Information.} 
	
	\subsection{Ethic.}
	\noindent 
	The authors declare that this is an original work, not previously published or submitted for publication elsewhere. 
	
	\subsection{Funding.} 
	\noindent The research of the first author is funded by the University Grants Commission (UGC), New Delhi, India, through NTA Reference No : 231610034215.
	
	\subsection{Acknowledgement}
	\noindent 
	Priyanka Majethiya and Shivam Bajpeyi gratefully acknowledge SVNIT Surat, India, for the facilities and support provided during the course of this research.

	\subsection{Data availability.} 
	\noindent 
		The data used in this research are obtained from a public medical imaging repository, which is widely used and validated in the scientific community for brain tumor analysis.

	\subsection{Authorship contribution.}
	\noindent
	\textbf{Priyanka Majethiya:} Writing – Original Draft, Writing – Review and Editing, Conceptualization, Formal Analysis, Methodology, Mathematical Proofs, Visualization. 
	\textbf{Shivam Bajpeyi:} Writing – Review and Editing, Conceptualization, Formal Analysis, Methodology, Visualization, Supervision.

	\subsection{Conflict of interest.} 
	\noindent 
	The authors declare that there is no conflict of interest regarding the content of this article.

	\end{document}